\documentclass[11pt, reqno, dvipsnames]{amsart}
\usepackage[utf8]{inputenc}
\usepackage[english]{babel}
\usepackage{color}
\usepackage{amsmath,amsthm,amssymb,amsfonts}
\usepackage{mathrsfs}
\usepackage{tikz-cd}
\usepackage[linktocpage=true]{hyperref}
\usepackage{enumerate}
\usepackage{geometry}
\usepackage{comment}

\hypersetup{
	colorlinks = true,
	linkcolor = {ForestGreen},
	citecolor = {RoyalBlue},
	urlcolor = {Black}
}

\newcommand{\Z}{{\mathbb Z}}
\newcommand{\Q}{{\mathbb Q}}
\newcommand{\R}{{\mathbb R}}

\newcommand{\N}{{\mathbb N}}
\newcommand{\F}{{\mathbb F}}

\newcommand{\f}{{\mathcal F}}

\newcommand{\Spec}{\operatorname{Spec}}

\newcommand{\Spa}{\operatorname{Spa}}

\newcommand{\an}{\operatorname{an}}

\newcommand{\et}{\operatorname{\acute{e}t}}
\newcommand{\ket}{\operatorname{k\acute{e}t}}

\newcommand{\proket}{\operatorname{prok\acute{e}t}}

\numberwithin{equation}{section}
\newtheorem{theorem}{Theorem}
\numberwithin{theorem}{section}

\newtheorem{thm}[theorem]{Theorem}
\newtheorem{lem}[theorem]{Lemma}
\newtheorem{cor}[theorem]{Corollary}
\newtheorem{prop}[theorem]{Proposition}

\theoremstyle{definition}
\newtheorem{defn}[theorem]{Definition}
\newtheorem{rem}[theorem]{Remark}
\newtheorem{ex}[theorem]{Example}

\AtBeginDocument{%
	\def\MR#1{}
}

\numberwithin{equation}{section}
\begin{document}
	
	\title{Log blow-ups and global resolutions in rigid geometry}

	\author{Xinyu Shao}

	\address{Institut für Algebra, Zahlentheorie und Diskrete Mathematik, Leibniz Universität Hannover, Welfengarten 1, 30167 Hannover, Germany}
	
	\email{xinyu.shao@math.uni-hannover.de}
	
	\begin{abstract}
		In this article, we study log blow-ups in the setting of log adic spaces. As in the theory of log schemes, we prove the existence of global resolutions of toric singularities for fs log adic spaces log smooth over a $p$-adic field, and the acyclicity of certain sheaves under log blow-ups. One new feature of (log) rigid analytic varieties is their integral structure sheaf: we prove that it is also almost acyclic under log blow-ups in the Kummer \'etale topology. We also supply some foundational results on log adic spaces which are not covered in \cite{dllz2023logadic}.
	\end{abstract}
	
	\maketitle
	\tableofcontents
	
	\section{Introduction}

	Let $K$ be a complete extension of $\Q_p$. Let $\overline{K}$ be an algebraic closure of $K$, and let $C$ be its $p$-adic completion. The main goal of this article is to provide the necessary background in logarithmic geometry for the author’s forthcoming work \cite{xslogsyn} on logarithmic syntomic cohomology.

	Logarithmic geometry, based on the notion of log structures due to Fontaine, Illusie and Kato \cite{kato1989log}, was developed to treat certain singular schemes, such as semistable degenerations and varieties with normal crossing boundaries, as if they were smooth. Since then, it has been well studied in the algebraic setting. On the cohomological side, the log crystalline cohomology of Hyodo and Kato \cite{hyodokato1994} (see also \cite{tsuji1999poincare}) plays an important role in $p$-adic Hodge theory, for example in the proof of the semistable comparison theorem \cite{tsuji1999padicetale}. The Kummer \'etale topology, introduced by Kato and further studied by Fujiwara, Kato and Nakayama \cite{nakayama1997logetale}, provides a good theory of \'etale cohomology for log schemes. On the geometric side, Kato \cite{kato1994toric} studied log regular schemes, whose singularities are called toric singularities, together with their log blow-ups, and Nizio\l{} \cite{niziol2006toric} proved that toric singularities can be resolved globally by log blow-ups. 
	
	These two sides are closely related: many cohomology theories, such as the Kummer \'etale cohomology of locally constant sheaves and coherent cohomology, are invariant under log blow-ups. Together with the resolution of toric singularities, this often allows one to reduce problems on log regular schemes to the case where the log structure comes from a strictly normal crossing divisor. We refer the reader to \cite{illusie2002log} for an overview of logarithmic \'etale cohomology, which also includes results from unpublished papers, and to \cite{ogus2018log} for a systematic treatment of logarithmic geometry.

	In the rigid analytic setting, the foundations of log adic spaces were developed by Hansheng Diao, Kai-Wen Lan, Ruochuan Liu and Xinwen Zhu in \cite{dllz2023logadic}. They used these foundations to prove many results: for example, they constructed a candidate Riemann--Hilbert functor for algebraic varieties in \cite{dllz2023logrh}, where the \'etale topology on the analytic side (not just on the algebraic side) is necessary. In \cite{llz2023derham}, Lan, Liu and Zhu studied \'etale cohomology with compact support and proved Poincar\'e duality for smooth and almost proper rigid analytic varieties (i.e., varieties of the form $X\setminus Z$, where $X$ is proper and $Z$ is Zariski closed in $X$).

	Following the theory of logarithmic geometry developed in the algebraic setting, we extend the foundations of \cite{dllz2023logadic} in the rigid analytic setting. This article has two main goals. Firstly, we define log blow-ups of log adic spaces, study their basic properties, and prove the acyclicity of certain sheaves under log blow-ups. This includes the classical cases, such as locally constant sheaves and the structure sheaf. We also study the integral structure sheaf of log adic spaces, which is a new feature of the rigid analytic setting: we prove that $\mathcal O^+/p$ and $\mathcal O^+$ are almost acyclic under log blow-ups in the Kummer \'etale topology\footnote{However, this is not true in the \'etale topology; see Remark \ref{notacyclic}.}. The theorem is stated as follows.
	
	\begin{theorem} (Theorem \ref{acyclicintegral} and \ref{acyclicintegral2})
		Let $X$ be an analytic fs log rigid variety over $C$, and let $f:Y\to X$ be a log blow-up along a coherent ideal $\mathcal{J}\subset\mathcal{M}_X$ whose geometric stalks are nonempty. Then the adjunction map $$\mathcal{O}_X^{+}/p \to Rf_*(\mathcal{O}_Y^{+}/p)$$ is an almost quasi-isomorphism over the Kummer \'etale site. Moreover, when $X$ is log smooth over $C$,  the adjunction map $$\mathcal{O}_X^{+} \to Rf_*(\mathcal{O}_Y^{+})$$ is an almost quasi-isomorphism over $X_{\ket}$.
	\end{theorem}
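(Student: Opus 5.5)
The plan is to reduce both statements to a local computation on perfectoid covers in the pro-Kummer étale topology, and then invoke the primitive comparison theorem.

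\textbf{Step 1: reduce to the toric chart.} The question is local on $X_{\ket}$ and log blow-ups are compatible with strict étale base change. So I would first shrink $X$ until it admits a chart by a toric monoid $P$, with $\mathcal J$ generated by an ideal $J\subset P$. Then $Y = X\times_{\Spa(C\langle P\rangle)}\mathbb{P}_J$, where $\mathbb{P}_J$ is the log blow-up of the toric space along $J$. By the theory of toric varieties, $\mathbb{P}_J$ is covered by affinoid toric charts $\Spa(C\langle P_a\rangle)$, where $P_a$ is the saturation of $P[J-a]$ for $a\in J$.

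\textbf{Step 2: pass to the perfectoid cover.} Take the pro-Kummer étale cover $\widetilde X\to X$ obtained by extracting all $n$-th roots of the chart, i.e. base change along $C\langle P\rangle\to C\langle P_{\Q_{\ge0}}\rangle$. The comparison $\nu^*$ between $X_{\ket}$ and $X_{\proket}$ identifies $Rf_*$ of $\mathcal O^+/p$ with the corresponding pro-Kummer étale computation. Over $\widetilde X$, the pullback $\widetilde Y\to\widetilde X$ is the base change of the log blow-up of $\Spa(C\langle P_{\Q_{\ge0}}\rangle)$ along $J$. After adjoining all roots, the ideal $J$ becomes "divisible". The key point I would use is that each chart $P_{a,\Q_{\ge0}}$ contains $P_{\Q_{\ge0}}$, and each element of $J$ becomes a multiple of $a$ up to units of fractional exponent.

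I would then compute the almost cohomology of $\mathcal O^+/p$ on $\widetilde Y$. Affinoid perfectoids have almost vanishing higher $\mathcal O^+/p$ cohomology. The Čech complex of the cover $\{\widetilde Y_a\}$ is a complex of monoid algebras over $\mathcal O^+/p$ graded by $P^{gp}_{\Q}$, so it decomposes degree by degree. For each degree $\chi$, the set of charts $a$ for which $\chi\in P_{a,\Q_{\ge0}}$ is star-shaped or convex in the fan. Hence that summand is the cochain complex of a contractible polyhedral complex and has cohomology $\mathcal O^+/p$ exactly in degree $0$, when $\chi\in P_{\Q_{\ge 0}}$. This is the same combinatorial argument Kato and Nizio\l{} use for $\mathcal O$. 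Descending along the Galois group $\Z_p(1)^r$-torsor via Cartan--Leray then gives the almost isomorphism $\mathcal O^+_X/p\to Rf_*\mathcal O^+_Y/p$.

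\textbf{Step 3: pass from $\mathcal O^+/p$ to $\mathcal O^+$.} By induction on $n$, the first statement gives the same for $\mathcal O^+/p^n$. Then take $R\lim$: one has $\mathcal O^+=R\lim\mathcal O^+/p^n$ almost on $X_{\ket}$ and on $Y_{\ket}$ when the space is log smooth. This is where log smoothness enters, since log smoothness gives locally a toric chart étale over a torus with perfectoid covers, so the $R^1\lim$ terms vanish almost. Since $Rf_*$ commutes with $R\lim$, the second claim follows.

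\textbf{Main obstacle.} I expect the main obstacle to be Step 2, namely controlling the Čech complex on $\widetilde Y$ almost integrally. Working over $\mathcal O^+/p$ rather than $C$ means the degree-$\chi$ pieces must be handled for all rational $\chi$, including those with torsion contributions. One also needs that the fiber products $\widetilde Y_a\cap\widetilde Y_b$ are again affinoid perfectoid toric charts; I believe this holds since they correspond to faces and intersections of cones. If the combinatorics fails integrally, I would instead compare with $\mathcal O$ via primitive comparison.
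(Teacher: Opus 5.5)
Your proposal has two genuine gaps. The first, in Step 3, is fatal as written.

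\textbf{Step 3.} On $X_{\ket}$ the map $\mathcal O^+\to R\lim_n\mathcal O^+/p^n$ is \emph{not} an almost isomorphism, whether or not $X$ is log smooth. Indeed $\mathcal O^+_{X_{\ket}}/p^n\simeq R\nu_*(\mathcal O^+_{X_{\proket}}/p^n)$ and $R\nu_*$ commutes with $R\lim$. Hence $R\lim_n\mathcal O^+_{X_{\ket}}/p^n\simeq^a R\nu_*\widehat{\mathcal O}^+_{X_{\proket}}$, whose first cohomology sheaf becomes $R^1\nu_*\widehat{\mathcal O}\simeq\Omega^{\log}_X(-1)\neq0$ after inverting $p$. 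The perfectoid covers that kill $R^1\lim$ live on $X_{\proket}$, not on $X_{\ket}$. So your limit argument proves acyclicity for $R\nu_*\widehat{\mathcal O}^+$, not for $\mathcal O^+$.

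More fundamentally, no argument using only the $\mathcal O^+/p^n$ statements can work. Those statements show that $p$ acts almost invertibly on the cone $\mathcal C$ of $\mathcal O_X^+\to Rf_*\mathcal O_Y^+$, so $\mathcal C\simeq^a\mathcal C[1/p]$. Since $f$ is qcqs, $\mathcal C[1/p]$ is exactly the cone of $\mathcal O_X\to Rf_*\mathcal O_Y$. The indispensable extra input is therefore the coherent acyclicity $\mathcal O_X\simeq Rf_*\mathcal O_Y$ on $X_{\ket}$. This is Theorem \ref{acycliccoh}, which rests on Kato's theorem for toric schemes, GAGA, and the agreement of Kummer \'etale and analytic coherent cohomology, and it is where log smoothness is really used. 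The paper combines it with $\mathcal O/\mathcal O^+=\varinjlim_{\times p}\mathcal O^+/p^n$ and the five lemma; this is a filtered colimit, which does commute with $Rf_*$.

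\textbf{Step 2.} Your $P^{\mathrm{gp}}_\Q$-graded \v{C}ech computation only applies to the toric model $X=\mathbb E_P$ itself.
\begin{enumerate}
\item For an arbitrary analytic fs log rigid variety $X$, which is all the first assertion assumes, the root tower $\widetilde X$ is not perfectoid. Its \v{C}ech complex is then neither a complex of monoid algebras nor a computation of $R\Gamma(\widetilde Y,\mathcal O^+/p)$.
\item Even for log smooth $X$, which is only strictly \'etale over $\mathbb E_P$, the \v{C}ech complex is merely a base change of the graded one. You would have to justify that this base change preserves almost exactness.
\item The sheaf statement also needs the computation for a basis of Kummer \'etale $U\to X$, not just for $X$.
\end{enumerate}
What is missing is a proper base change theorem for $\mathcal O^+/p$ along strict morphisms. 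The paper embeds $X$ by a strict closed immersion with $\operatorname{dim.tr}=0$ into the log smooth space $\mathbb E_P\times\mathbb B^d$ and applies Theorem \ref{properbaselocal}(2). In the log smooth case it avoids \v{C}ech complexes altogether, combining the $\F_p$-acyclicity (Theorem \ref{acycliclocal}) with the relative primitive comparison (Theorem \ref{primitive}).

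On the toric model itself your graded computation is fine integrally, so the torsion worry is unfounded. Each $\chi$-piece computes the cohomology of $\sigma$ with supports in $\{\langle\chi,\cdot\rangle\geq0\}$ and constant coefficients, and the complement of that set in $\sigma$ is convex. Also, the Galois group of the full root tower is $\widehat{\Z}(1)^r$, not $\Z_p(1)^r$.
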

	
	Secondly, we prove that the toric singularities of a quasi-compact fs log adic space log smooth over $K$ can be resolved globally by a sequence of log blow-ups. This is an analogue of the main result of Nizio\l{} \cite{niziol2006toric} in the rigid analytic setting. The theorem is stated as follows.

	\begin{thm} (Theorem \ref{globalresolution})
		Let $X$ be a quasi-compact fs log rigid variety log smooth over $K$, and let $X^\circ$ be the maximal open subspace of $X$ on which the log structure is trivial. Then there exists a morphism $\pi:X'\to X$ which is a composition of log blow-ups, such that $X'$ is smooth over $K$, $\pi$ is an isomorphism over $X^\circ$, and the log structure on $X'$ is the compactifying log structure associated with the strictly normal crossing divisor $D':=(X'\setminus X^\circ)_{\mathrm{red}}$.
	\end{thm}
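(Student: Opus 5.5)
The plan follows Nizio\l{}'s strategy \cite{niziol2006toric}, transported to log adic spaces through the local structure of log smooth fs log rigid varieties. The first step is to attach to $X$ a combinatorial object. Étale locally, a log smooth fs log rigid variety over $K$ admits a strictly étale map to a toric chart $\Spa(K\langle P\rangle, \mathcal{O}_K\langle P\rangle)$ for a toric fs monoid $P$ (from the local structure results of \cite{dllz2023logadic}). The stratification of $X$ by the rank of $\overline{\mathcal M}_X$ then gives a fan $F(X)$ in Kato's sense: a conical polyhedral complex with one cone $\sigma_x=\mathrm{Hom}(\overline{\mathcal M}_{X,\bar x},\R_{\ge0})$ for each generic point of a log stratum, glued along face maps given by cospecialization. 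Quasi-compactness of $X$ makes this complex finite. Since the étale-local charts may not glue Zariski-locally, I would either work with Kato fans built from the stalks of $\overline{\mathcal M}_X$ together with their specialization maps, or first reduce to the case where the log structure is Zariski (e.g.\ by a preliminary log blow-up that separates self-intersecting strata, as Nizio\l{} does).

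The second step is the dictionary between subdivisions and log blow-ups. For a coherent ideal $\mathcal J\subset\mathcal M_X$, the log blow-up $f:Y\to X$ is described locally on a chart $P$ by gluing the charts $P[\mathcal J - a]^{\mathrm{sat}}$ over the generators $a$. This corresponds exactly to the subdivision of each $\sigma_x$ by the linearity domains of the function $\min_{a\in \mathcal J}\langle a,\cdot\rangle$. Conversely, I would show that every subdivision of $F(X)$ obtained from a compatible family of such piecewise-linear functions, in particular any regular projective subdivision, is realized by a log blow-up along a coherent ideal of $\mathcal M_X$. Here I would use the earlier basic properties of log blow-ups: compatibility with strict étale base change, with composition and with saturation, and the fact that they are isomorphisms where $\mathcal J$ is invertible, hence over $X^\circ$.

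The third step is combinatorial, following Kato--Nizio\l{}. Any finite fan admits a sequence of star (barycentric-type) subdivisions, each given by a coherent ideal, making it regular, meaning every cone is generated by part of a $\Z$-basis. Barycentric subdivisions are globally defined: they depend only on the faces of the cones, so the corresponding ideals glue even though the charts are only étale-local. Composing the associated log blow-ups yields $\pi:X'\to X$, an isomorphism over $X^\circ$ since it is an isomorphism where the log structure is trivial.

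Finally, regularity of the fan means every stalk $\overline{\mathcal M}_{X',\bar x}$ is free, so $X'$ is locally strictly étale over $K\langle \N^r\rangle\langle T_1,\dots,T_m\rangle$. Hence $X'$ is smooth, $D'=(X'\setminus X^\circ)_{\mathrm{red}}$ is a normal crossing divisor, and $\mathcal M_{X'}$ is the compactifying log structure of $D'$. To get a strictly normal crossing divisor, with irreducible components smooth, I would perform further barycentric subdivisions, as Nizio\l{} does, so that no component self-intersects.

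I expect the main obstacle to be globalizing the fan: making $F(X)$ and the chosen subdivision well defined and compatible with étale-local charts whose monodromy can permute faces. The second difficulty is verifying that the resulting ideals are coherent on the adic space. I would handle both by working only with canonical subdivisions, namely barycentric ones and their iterates, which are invariant under all automorphisms of the cones.
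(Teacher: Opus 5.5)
\textbf{The gap is in your third step.} Iterated barycentric subdivisions do not make an arbitrary fan regular, so restricting to these canonical subdivisions cannot work.

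Here is a counterexample. Let $N=\Z^3+\Z\cdot\frac13(v_1+v_3)$, where $v_1,v_2,v_3$ is the standard basis of $\Z^3$, and let $\sigma=\langle v_1,v_2,v_3\rangle$, a cone of multiplicity $3$. Put $C_n=\langle v_1,\ nv_1+v_2,\ \binom{n+1}{2}v_1+nv_2+v_3\rangle$, so $C_0=\sigma$. Writing $w_1,w_2,w_3$ for these generators, $C_{n+1}=\langle w_1,\,w_1+w_2,\,w_1+w_2+w_3\rangle$ is a cone of the barycentric subdivision of $C_n$. A vector that is primitive in $\Z^3$ fails to be primitive in $N$ only if it is congruent modulo $3$ to a nonzero multiple of $(1,0,1)$. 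This never happens for $(1,0,0)$, $(n,1,0)$ or $(\binom{n+1}{2},n,1)$, since $3\mid n$ implies $3\mid\binom{n+1}{2}$. So all generators stay primitive and still span $\Z^3$, and every $C_n$ has multiplicity $3$. Hence no iterate of the barycentric subdivision of $\sigma$ is regular.

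The subdivisions that do resolve, as in Kato's Proposition 9.8 or KKMS, are star subdivisions at chosen lattice points. Those choices are not invariant under the face identifications and automorphisms coming from \'etale overlaps, so the corresponding ideals do not glue. Nizio\l{} can make such choices consistently only because she works on a global Kato fan after reducing to Zariski log structures. As the paper points out, that reduction, and the generic points of log strata it relies on, are not available in the analytic topology. So your first step does not supply a global object on which to make the choices either.

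\textbf{What is missing} is a resolution algorithm for fans whose centers are canonical and functorial for surjective local isomorphisms of cone complexes. The paper uses W\l{}odarczyk's theorem (Theorem \ref{canonicalfans}) as follows.
\begin{itemize}
\item Apply it to the disjoint union of the fans of finitely many strictly \'etale toric charts. Functoriality forces the centers $V_i$ to agree under the face identifications induced by $\overline{\mathcal M}_X$ at geometric points of overlaps, including self-overlaps.
\item Realize each step $V_i\cdot\Sigma_i$ by the ideals $J_{\sigma,a}=\prod_{v\in V_i\cap\sigma}\{m\in P_\sigma\mid\langle m,v\rangle\geq a\}$ of Lemma \ref{canonicalstarideal}. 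These are compatible with localization to faces and with lattice isomorphisms, so they descend to a coherent ideal of $\overline{\mathcal M}_X$.
\end{itemize}
This yields a smooth space with normal crossing boundary. Your last step, blowing up in barycentric fashion the loci with the maximal number of branches, is essentially the paper's passage from normal crossings to strict normal crossings and is fine.
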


	One motivation for this article comes from the author's upcoming work \cite{xslogsyn} on logarithmic syntomic cohomology via period sheaves: in that paper, we compute the cohomology of certain period sheaves on the pro-Kummer \'etale site of an fs log adic space log smooth over $C$, and compare it with Hyodo--Kato cohomology. One technical step in \cite{xslogsyn} is to reduce to the case where the log structure comes from a strictly normal crossing divisor. The acyclicity results in this article imply the following corollary.

	\begin{cor}
		Assume that $K$ is perfectoid. Let $X$ be an fs log adic space log smooth over $K$, let $\f\in\{\mathcal O^+/p^m,\widehat{\mathcal O}^+,\mathcal O^{\flat+},\mathbb{A}_{\inf},\mathbb{B}_{\operatorname{dR}},\mathbb{B}_{\operatorname{dR}}^+,\mathbb{B},\cdots\}$ be a period sheaf on $X_{\proket}$, and let $Y \to X$ be a log blow-up along a coherent ideal $\mathcal J\subset\mathcal M_X$ whose geometric stalks are nonempty. Then for $i \geq 0$, the natural map $$H^i_{\proket}(X,\f) \to H^i_{\proket}(Y,\f)$$ is an (almost) isomorphism.
	\end{cor}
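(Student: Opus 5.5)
The corollary follows from the two acyclicity statements of the first theorem, which I will use as black boxes (Theorems \ref{acyclicintegral} and \ref{acyclicintegral2}). The plan is to pass from the Kummer étale site to the pro-Kummer étale site, and then build up each period sheaf from $\mathcal O^+/p$ by limits and colimits.

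\emph{Step 1: the sheaves $\mathcal O^+/p^m$.} Let $\nu_X : X_{\proket}\to X_{\ket}$ be the natural projection. By the primitive comparison results of \cite{dllz2023logadic}, for $\mathcal O^+/p$ the map $\nu_X^{-1}(\mathcal O^+_X/p)\to \mathcal O^+_{X_{\proket}}/p$ is an isomorphism, and $R\nu_{X*}$ of it is almost $\mathcal O_X^+/p$ on affinoids, for instance via the log affinoid perfectoid cover of toric charts. Since $f:Y\to X$ is compatible with $\nu$, we have $R\Gamma_{\proket}(Y,\mathcal O^+/p)\simeq R\Gamma_{\ket}(X,Rf_*\mathcal O_Y^+/p)$ up to almost isomorphism. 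The first theorem then says this is almost $R\Gamma_{\ket}(X,\mathcal O_X^+/p)$. Two further reductions finish this step. First, $\mathcal O^+/p^m$ follows by dévissage along $0\to\mathcal O^+/p^{m-1}\xrightarrow{p}\mathcal O^+/p^m\to\mathcal O^+/p\to0$. Second, since $K$ is perfectoid, base change along $C$ or $K$ lets the $C$-statement apply; alternatively one runs the same argument over $K$, since almost mathematics only needs a perfectoid base.

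\emph{Step 2: limits and tilts.} For the inverse-limit sheaves I use that on $X_{\proket}$ we have $\widehat{\mathcal O}^+=R\varprojlim \mathcal O^+/p^m$, almost, since $R^1\varprojlim$ vanishes almost on log affinoid perfectoids. Applying $R\Gamma$ and passing to the limit of the almost isomorphisms from Step 1 handles $\widehat{\mathcal O}^+$. For $\mathcal O^{\flat+}$ I use $\mathcal O^{\flat+}/p^\flat\cong\mathcal O^+/p$ together with $\mathcal O^{\flat+}=\varprojlim$ of these along Frobenius, and argue the same way. For $\mathbb A_{\inf}=W(\mathcal O^{\flat+})$ I filter by $V$ (equivalently by $p$), so that the graded pieces are copies of $\mathcal O^{\flat+}$, and then take the $p$-adic limit.

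\emph{Step 3: the rational sheaves.} The sheaves $\mathbb B_{\operatorname{dR}}^+$, $\mathbb B_{\operatorname{dR}}$ and $\mathbb B$ are obtained from $\mathbb A_{\inf}$ by three operations: inverting $p$ (or $[p^\flat]$), completing along $\ker\theta$ with graded pieces $\widehat{\mathcal O}(i)$, and taking filtered colimits over powers of $t$. Inverting $p$ turns almost isomorphisms into genuine isomorphisms, which is why the statement says ``(almost)''. Cohomology commutes with filtered colimits on the coherent (qcqs) pro-Kummer étale site, at least for quasi-compact $X$; in general one reduces to that case via the Mayer--Vietoris/Čech spectral sequence.

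The main obstacle is Step 1, namely identifying pro-Kummer étale cohomology of $\mathcal O^+/p$ with Kummer étale cohomology of $\mathcal O^+/p$ compatibly on $X$ and on $Y$. Here one also needs $Y$ to be again log smooth, or at least fs, so that the same comparison applies to $Y$. Log blow-ups preserve log smoothness, which I expect is established in the paper.
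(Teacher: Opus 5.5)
Your proposal is correct and is essentially the deduction the paper has in mind. The paper gives no separate proof; it only remarks that the corollary follows from its acyclicity results. Your argument is exactly that deduction: transfer Theorem \ref{acyclicintegral} from $X_{\ket}$ to $X_{\proket}$ via $\nu$, then pass to the other period sheaves by d\'evissage, almost vanishing of $R^1\varprojlim$ on log affinoid perfectoids, and filtered colimits.

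Two corrections. First, the step you flag as the main obstacle is formal and gives an honest isomorphism: $\mathcal O^+_{X_{\proket}}=\nu^{-1}\mathcal O^+_{X_{\ket}}$ and $R\nu_*\nu^{-1}\simeq\mathrm{id}$ by \cite{dllz2023logadic}, so no primitive comparison is involved. Second, if $\mathbb B$ denotes the usual sheaf $\varprojlim_I\mathbb B_I$, it is not produced by the three operations you list; it needs the same limit argument applied to the $p$-adically completed localizations of $\mathbb A_{\inf}$ that define $\mathbb B_I$.
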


	Therefore, by the resolution of toric singularities, whenever we want to compute the cohomology of period sheaves on the pro-Kummer \'etale site of a log smooth fs log rigid analytic variety, we may assume that the log structure comes from a strictly normal crossing divisor\footnote{In fact, the local resolution (Corollary \ref{localresolution}) suffices here.}.

	The proofs for locally constant sheaves and coherent sheaves are similar to those for log schemes in \cite[Theorem 6.2]{illusie2002log} and \cite[Theorem 11.3]{kato1994toric}, respectively; the former also uses a proper base change theorem, which we prove in this article. For $\mathcal O^+/p$, we prove a relative version of the primitive comparison theorem (Theorem \ref{primitive}), and the claim then follows from the result for local systems. The case of $\mathcal O^+$ is then deduced from the cases of $\mathcal O^+/p$ and $\mathcal O$.

	On the other hand, our proof of global resolutions is different from that in \cite{niziol2006toric}. To resolve the singularities of a log regular scheme, the approach in \cite{niziol2006toric} first reduces the problem to log schemes with Zariski log structures, to which one can associate Kato fans, and then resolves the Kato fans. This approach has serious issues in the rigid analytic setting. Roughly speaking, if the log structure is associated with a divisor $D$, then the first step in \cite{niziol2006toric} is to resolve $D$ so that it has no self-intersections. This step does not directly carry over to the rigid analytic setting: the analytic topology is too fine for the same construction of strata and their generic points. In particular, the construction of Kato fans used in that approach is not available here. 
	
	To resolve this issue, we apply a different algorithm, recently developed by Jaros\l{}aw W\l{}odarczyk in \cite{wlodarczyk2022toric}: we first resolve the singularities so that the log structure is associated with a normal crossing divisor, and then separate the branches of this divisor to obtain a strictly normal crossing divisor. The second step relies on the normal crossing assumption, and therefore comes after the resolution of toric singularities. After some modifications, this approach applies to our situation for analytic varieties. We carry it out in Section \ref{globalres}.

\begingroup
\makeatletter
\let\@tocwrite\@gobbletwo
\makeatother

	\subsection*{Acknowledgement} The idea of studying global resolutions in the rigid analytic setting originated during the author's PhD studies under the supervision of Wiesława Nizioł. The author is grateful to her for everything he learned about logarithmic geometry under her guidance. The author also thanks Ben Heuer for many valuable discussions, especially concerning the cohomology of the integral structure sheaf.

	The author acknowledges funding by the Deutsche Forschungsgemeinschaft (DFG, German Research Foundation) through the Collaborative Research Centre TRR 326 ‘Geometry and Arithmetic of Uniformized Structures’, project number 444845124.

	\subsection*{Declaration of AI usage} This article was proofread and polished by Claude Opus 5.5. ChatGPT 5.5 found the reference \cite{wlodarczyk2022toric} during a discussion; this reference plays a key role in the construction of global resolutions in the rigid analytic setting. Discussions with other versions of ChatGPT and ChatGPT Pro also improved this article.

	\subsection*{Notation and conventions}
	
	We adopt the language of adic spaces as developed in \cite{huber2013etale}. For a $p$-adic field $L$, a rigid analytic variety over $L$ is a quasi-separated adic space locally of finite type over $\Spa(L,\mathcal O_L)$. All rigid analytic varieties considered are over $K$ or $C$. We assume all rigid analytic varieties are separated, taut, and countable at infinity.

	We also freely use the language of log adic spaces and toric geometry. For definitions and properties of log adic spaces, we refer the reader to \cite{dllz2023logadic}. 

\par
\endgroup
	
	\section{Preliminaries}
	
	\subsection{Log smoothness}
	
	We recall the definition and some basic properties of log smooth morphisms. The main goal of this subsection is to identify the log structure of an fs log smooth rigid variety with the compactifying log structure determined by its trivial locus. This is a standard result for log regular schemes; see \cite[Theorem 11.6]{kato1994toric} and \cite[Theorem I.1.9.4]{ogus2018log}. The corresponding compactifying log structure in the rigid analytic setting is described in \cite[Example 2.3.16]{dllz2023logadic}. We include the argument for completeness.

	We first recall the definition of log smoothness from \cite[Definition 3.1.1]{dllz2023logadic}.
	
	\begin{defn}
		Let $f : Y \rightarrow X$ be a morphism between locally noetherian fs log adic spaces. We say that $f$ is log smooth if, \'etale locally on $Y$ and $X$, it admits an fs chart $u : P \rightarrow Q$ satisfying the following conditions:
		
		(1) the kernel and the torsion part of the cokernel of $u^{\mathrm{gp}} : P^{\mathrm{gp}} \rightarrow Q^{\mathrm{gp}}$ are finite groups whose orders are invertible in $\mathcal{O}_X$; and
		
		(2) $f$ and $u$ induce a morphism $Y \rightarrow X \times_{X\langle P \rangle} X\langle Q \rangle$ of log adic spaces whose underlying morphism of adic spaces is \'etale.
	\end{defn}

	The following local description is \cite[Proposition 3.1.10]{dllz2023logadic}.

	\begin{prop}
		Let $X$ be an fs log adic space log smooth over $\mathrm{Spa}(K, K^+)$, where $(K, K^+)$ is an affinoid field. Then, \'etale locally on $X$, there exist a sharp fs monoid $P$ and a strictly \'etale morphism $X \rightarrow \mathrm{Spa}(K\langle P \rangle, K^+\langle P \rangle)$ that is a composition of rational localizations and finite \'etale morphisms.
	\end{prop}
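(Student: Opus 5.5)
The plan is to start from the definition of log smoothness applied to $f\colon X\to \mathrm{Spa}(K,K^+)$ and to strip off the units of the chart monoid, absorbing the resulting torus into a sharp monoid of the form $\overline{Q}\oplus\N^r$.

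\textbf{Step 1: a chart in which the monoid has finite type.} The base $\mathrm{Spa}(K,K^+)$ carries the trivial log structure, so the chart may be taken with $P=0$. The definition then gives the following, étale locally on $X$:
\begin{itemize}
\item an fs monoid $Q$ whose group $Q^{\mathrm{gp}}$ has torsion of order invertible in $K$, which is automatic since $K$ has characteristic $0$;
\item a strict morphism $X\to \mathrm{Spa}(K\langle Q\rangle,K^+\langle Q\rangle)$ whose underlying morphism of adic spaces is étale.
\end{itemize}
By Huber's local structure theorem for étale morphisms, after further étale localization this morphism factors as a composition of rational localizations and finite étale maps. So it remains to replace $Q$ by a sharp monoid.

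\textbf{Step 2: splitting off the units.} Since $Q$ is fs, its unit group $Q^*$ is a finitely generated abelian group. We use the standard splitting $Q\cong \overline{Q}\oplus Q^*$ for fs monoids, where $\overline{Q}=Q/Q^*$ is sharp and fs. After dividing out, or localizing away, the torsion of $Q^*$ as below, we write $Q^*\cong \Z^r\oplus T$ with $T$ finite. Then
\[ \mathrm{Spa}(K\langle Q\rangle,K^+\langle Q\rangle)\cong \mathrm{Spa}(K\langle \overline{Q}\rangle)\times \mathbb{T}^r\times \mathrm{Spa}(K[T]), \]
where $\mathbb{T}^r$ is the affinoid torus $\{|t_i|=1\}$. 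The factor $K[T]$ is a finite étale $K$-algebra, because $|T|$ is invertible in $K$. Moreover, the log structure comes only from $\overline{Q}$, since elements of $Q^*$ map to units.

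\textbf{Step 3: absorbing the torus.} Set $P=\overline{Q}\oplus\N^r$, which is sharp and fs. The torus $\mathbb{T}^r$ is the rational subset $\{|t_1|\ge 1,\dots,|t_r|\ge 1\}$ of the closed unit disc $\mathrm{Spa}(K\langle\N^r\rangle)$. On this subset the coordinates $t_i$ are units, so the log structure pulled back from $\N^r$ is trivial there. Consequently the composite
\[ X\to \mathrm{Spa}(K\langle Q\rangle)\to \mathrm{Spa}(K\langle \overline{Q}\rangle)\times\mathbb{T}^r\hookrightarrow \mathrm{Spa}(K\langle P\rangle,K^+\langle P\rangle) \]
is strict. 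The middle map is finite étale (projection off $\mathrm{Spa}(K[T])$, followed by the finite étale factor) and the last map is a rational localization. Concatenating these with the factorization from Step 1 gives the required strictly étale map, composed of rational localizations and finite étale morphisms.

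\textbf{Main obstacle.} The delicate point is strictness in Step 3. One must check that the log structure of $\mathrm{Spa}(K\langle P\rangle)$, restricted to the torus locus, coincides with the log structure of $\mathrm{Spa}(K\langle \overline{Q}\rangle)$ pulled back. Equivalently, the associated log structure of the chart $\overline{Q}\oplus\N^r\to\mathcal{O}$ should agree with that of $\overline{Q}\to\mathcal{O}$, since the image of $\N^r$ lies in $\mathcal{O}^*$. I would verify this on stalks using the pushout description of associated log structures.

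A secondary check is that the torsion part of $Q^*$ is handled correctly. Passing from $K\langle Q\rangle$ to the component algebras must remain finite étale, and the chart must stay fs after the splitting.
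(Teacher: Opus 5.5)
The paper gives no argument for this statement; it quotes it from \cite[Proposition 3.1.10]{dllz2023logadic}, so there is no route in the paper to compare against. Your proof is a correct reconstruction along the standard lines: a chart $0\to Q$, Huber's local structure of étale morphisms, the splitting $Q\cong\overline{Q}\oplus Q^*$, a finite étale projection absorbing the torsion of $Q^*$, and the torus $\{|t_i|=1\}$ as a rational subset of the closed polydisc.

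Three points should be tightened.

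\textbf{The choice $P=0$ needs justification.} The sentence ``the base is trivial, so the chart may be taken with $P=0$'' is not enough on its own. The definition only supplies \emph{some} chart étale locally on the base. If that chart is $P\to L^\times$ with $P\neq 0$, over a finite separable $L/K$, then $\Spa(L,L^+)\times_{\Spa(L,L^+)\langle P\rangle}\Spa(L,L^+)\langle Q\rangle$ is a closed fiber of $\Spa(L\langle Q\rangle,L^+\langle Q\rangle)$, not the whole space. What you need is the form of the chart criterion in which the chart on the base is prescribed, i.e.\ the analogue of \cite[Theorem 3.5]{kato1989log}. Once you have that, the extension $L/K$ only contributes one more finite étale map.

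\textbf{Strictness is formal.} The step you single out as the main obstacle needs no stalk computation. Suppose $Q=Q_1\oplus Q_2$ with $Q_2$ mapping into $\mathcal{O}^\times$. Then the log structures associated with $Q\to\mathcal{O}$ and with $Q_1\to\mathcal{O}$ coincide. Indeed, for any log structure $\mathcal{N}$, a map over $\mathcal{O}$ is forced on $Q_2$, because $\alpha_{\mathcal{N}}^{-1}(\mathcal{O}^\times)=\mathcal{O}^\times$. Apply this to $\overline{Q}\oplus Q^*$ and to $\overline{Q}\oplus\N^r$ over the torus locus; strictness of every map in your chain follows at once.

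\textbf{Drop the remark about dividing out torsion.} Delete ``after dividing out, or localizing away, the torsion of $Q^*$''. You may not change the chart monoid that way. You also do not need to, since your projection off $\Spa(K[T])$ already handles $T$.
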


	We now prove the main result of this subsection.
		
	\begin{prop}
		Let $(X,\mathcal{M})$ be an fs log rigid variety log smooth over $\Spa(K,K^+)$, where $(K,K^+)$ is an affinoid field. Let $U$ be the maximal open subspace of $X$ on which $\mathcal{M}$ is trivial, and let $j : U \hookrightarrow X$ be the inclusion. Then the structure morphism induces a natural isomorphism $$\alpha : \mathcal{M} \xrightarrow{\simeq} j_*(\mathcal{O}_U^\times) \cap \mathcal{O}_X,$$ where the intersection is taken inside $j_*\mathcal{O}_U$.
	\end{prop}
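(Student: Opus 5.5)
We construct $\alpha$ first and then prove it is an isomorphism étale locally, using the chart of the previous Proposition. The structure map $\alpha_X:\mathcal M\to\mathcal O_X$ restricts on $U$ to an isomorphism $\mathcal M|_U=\mathcal O_U^\times$, since the log structure is trivial there. Adjunction then gives $\mathcal M\to j_*\mathcal M|_U=j_*\mathcal O_U^\times$. Together with $\alpha_X$, and because the restriction $\mathcal O_X\to j_*\mathcal O_U$ is injective (it is once we know $U$ is dense, see below), this yields $\alpha:\mathcal M\to j_*(\mathcal O_U^\times)\cap\mathcal O_X$. Both sides are étale sheaves compatible with étale base change; note that $U$ pulls back to the trivial locus because strict étale maps preserve it. So it suffices to check that $\alpha$ is bijective étale locally. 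By the previous Proposition we may therefore assume that $X\to \Spa(K\langle P\rangle,K^+\langle P\rangle)$ is strictly étale, a composition of rational localizations and finite étale maps, with $P$ sharp fs. Then $\mathcal M$ is the log structure associated with $P\to\mathcal O_X$, and $U$ is the locus where all monomials $e^a$, $a\in P$, are invertible.

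\emph{Injectivity.} Locally $\mathcal M=P\oplus\mathcal O^\times$ modulo the usual relation. Since $P$ is integral and torsion-free in $P^{\mathrm{gp}}$, it suffices to show two things: $U$ is dense in every étale neighborhood, and the monomials $e^a$ for distinct $a\in P$ differ by more than a unit. Concretely, if $u e^a=v e^b$ on $U$ with $u,v$ units, we must show $a=b$. For this we reduce to stalks at points $x$ of $X$. There the image of $P$ in $\overline{\mathcal M}_x$ is the face quotient $P/F_x$, where $F_x$ is the face of monomials invertible at $x$. We then use that $\mathcal O_{X,x}$ is normal: the affinoid toric algebra $K\langle P\rangle$ is normal because $P$ is saturated, and normality is preserved under étale maps. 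Moreover, the vanishing orders of $e^a$ along the components of $X\setminus U$ through $x$ recover $a$ modulo $F_x$, since the face $F_x$ is an intersection of facets and $P/F_x$ is sharp and saturated, hence determined by the valuations of the facets. The same statement gives the density of $U$: its complement is a divisor.

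\emph{Surjectivity (main step).} Let $f\in\mathcal O_{X,\bar x}$ be invertible on $U$. We want to write $f=u\,e^a$ with $u$ a unit. We take the divisor of $f$ on the normal local ring $\mathcal O_{X,\bar x}$, working with Weil divisors on the noetherian normal excellent ring. This divisor is supported on $X\setminus U$, the union of the toric boundary divisors $D_\rho$, one for each facet $\rho$ of $P$, i.e. each ray of the dual cone. Write $\operatorname{div}(f)=\sum n_\rho D_\rho$ with $n_\rho\geq 0$ since $f$ is regular. The key input is the toric computation: an effective torus-invariant divisor $\sum n_\rho D_\rho$ is principal in the local ring at the minimal stratum precisely when there is $m\in P^{\mathrm{gp}}$ with $\langle m,\rho\rangle=n_\rho$ for all $\rho$. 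Saturation of $P$ then forces $m\in P$. Once such an $m$ is found, $f/e^m$ has trivial divisor, hence is a unit by normality (algebraic Hartogs), and we are done.

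The hard part is to prove that the divisor of $f$ really is principal of this monomial form in the analytic/étale-local ring, rather than merely in the toric algebra. I would first pass to completions. The completed local ring of $X$ at $\bar x$ is isomorphic to the completion of $K'\langle P\rangle$ at the corresponding point of the stratum, for a finite extension $K'$, because the map is étale. That completion is a power series ring over a regular local ring in the face directions, of the form $\widehat{\mathcal O}[[P/F]]$ twisted by the units. For the monoid algebra $R[[Q]]$ with $Q$ sharp fs and $R$ regular, the class group is generated by the boundary divisors modulo the characters (Kato's computation for log regular rings, \cite[Theorem 11.6]{kato1994toric}). Faithful flatness of completion then descends principality of $\operatorname{div}(f)$ and the equality $f=u e^m$. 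Alternatively, one can cite the fact that $\mathcal O_{X,\bar x}$ with $P$ is a log regular local ring, by comparing its completion with $R[[P/F]]$, and apply Kato's theorem for log regular rings directly. I expect this to be the cleanest route.
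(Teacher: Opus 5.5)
Your proposal is correct and follows essentially the same route as the paper: reduce to a strictly \'etale toric chart, work in the normal noetherian \'etale local ring at $\overline{x}$ with the face $F$ and $Q=P/F$, use the facet valuations to identify $Q^{\mathrm{gp}}$ with boundary-supported divisors (giving injectivity), and for surjectivity use saturation of $Q$ and normality to write $f=u\,\alpha(p)$ with $u$ a unit. The only difference is the key step that a principal boundary-supported divisor is monomial: the paper simply asserts it from the toric description of Cartier divisors being unchanged by strictly \'etale localization, whereas you justify it via completion, Kato's theorem for log regular rings, and faithfully flat descent (there you only need that boundary divisors modulo characters inject into the class group, not that they generate it).
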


	\begin{proof}
		Since $(X,\mathcal{M})$ is log smooth over $\Spa(K,K^+)$, \'etale locally there exist a sharp fs monoid $P$ and a strictly \'etale morphism $X \to \Spa(K\langle P \rangle,K^+\langle P \rangle)$. Since the map is strict, $U$ is the pullback of the trivial locus of the log structure of $\Spa(K\langle P \rangle,K^+\langle P \rangle).$



		Let $\overline{x}\to X$ be a geometric point, set $R:=\mathcal{O}_{X_{\et},\overline{x}}$, and let $F:=\{p\in P\mid \alpha(p)\in R^\times\}.$ Then $F$ is a face of $P$, and the canonical chart identifies $$\overline{\mathcal{M}}_{\overline{x}}\simeq P/F \qquad\text{and}\qquad \mathcal{M}_{\overline{x}}\simeq P\oplus_F R^\times.$$ Under the structure morphism, the image of the latter monoid in $R$ consists of the elements $u\alpha(p)$ with $u\in R^\times$ and $p\in P$.

		Since $P$ is saturated, $K\langle P\rangle$ is normal, and hence $R$ is a normal noetherian local domain. Put $Q:=P/F$. The height-one prime ideals $\mathfrak{q}$ of $Q$ correspond to the height-one prime ideals $\mathfrak{p}_{\mathfrak{q}}$ of $R$ lying in the toric boundary. Let $v_{\mathfrak{q}}$ be the normalized valuation of the discrete valuation ring $R_{\mathfrak{p}_{\mathfrak{q}}}$. Since the elements of $F$ map to units in $R$, the restriction of $v_{\mathfrak{q}}$ to the monomials induces a homomorphism $$\nu_{\mathfrak{q}}:Q^{\mathrm{gp}}\longrightarrow\mathbb{Z}.$$ By the local description of Cartier divisors on toric varieties, which is unchanged by strictly \'etale localization, the map $$a\longmapsto\sum_{\operatorname{ht}(\mathfrak{q})=1}\nu_{\mathfrak{q}}(a)[\mathfrak{p}_{\mathfrak{q}}]$$ identifies $Q^{\mathrm{gp}}$ with the group of Cartier divisors supported on the boundary. In particular, this map is injective because $Q$ is sharp.

		Let $f\in R$ represent a germ in $(j_*(\mathcal{O}_U^\times)\cap\mathcal{O}_X)_{\overline{x}}$. Since $f$ is invertible on the trivial locus, its principal Weil divisor is supported on the toric boundary and is effective. Thus the preceding discussion gives an $a\in Q^{\mathrm{gp}}$ such that $$v_{\mathfrak{q}}(f)=\nu_{\mathfrak{q}}(a)\geq 0$$ for every height-one prime $\mathfrak{q}$ of $Q$. Since $Q$ is saturated, we have $$Q=\{a\in Q^{\mathrm{gp}}\mid \nu_{\mathfrak{q}}(a)\geq 0\text{ for every height-one prime }\mathfrak{q}\}.$$ Hence $a\in Q$. Choosing a representative $p\in P$ of $a$, we obtain
		$\operatorname{div}(f)=\operatorname{div}(\alpha(p))$. Thus $f/\alpha(p)$ has trivial divisor. Since $R$ is normal, $f/\alpha(p)\in R^\times$. Hence $f=u\alpha(p)$ for some $u\in R^\times$, proving surjectivity. Conversely, every $u\alpha(p)$ is invertible on $U$. Finally, the uniqueness of $a\in P/F$ shows that two such expressions define the same element of $P\oplus_F R^\times$ whenever they have the same image in $R$, proving injectivity.
	\end{proof}

	\subsection{Log geometric points} \label{loggeompt}
	
	
	We recall the definition and basic properties of log geometric points, which will be used throughout this article. Just as geometric points detect isomorphisms between \'etale sheaves on an adic space, log geometric points detect isomorphisms between Kummer \'etale sheaves on an fs log adic space.
	
	\begin{defn}
		(1) A log geometric point is a log point $\zeta = (\operatorname{Spa}(l, l^+), \mathcal{M}, \alpha)$ satisfying the following conditions:
		\begin{enumerate}
			\item[(a)] $l$ is a complete separably closed nonarchimedean field; and
			\item[(b)] if $M := \Gamma(\operatorname{Spa}(l, l^+), \mathcal{M})$, then $\overline{M} = M/l^\times$ is uniquely $n$-divisible for all positive integers $n$ invertible in $l$.
		\end{enumerate}
		
		(2) Let $X$ be a locally noetherian fs log adic space. A \emph{log geometric point of $X$} is a morphism of log adic spaces $\eta : \zeta \to X$, where $\zeta$ is a log geometric point.
		
		(3) Let $X$ be a locally noetherian fs log adic space. A \emph{Kummer \'etale neighborhood} of a log geometric point $\eta : \zeta \to X$ is a factorization $\zeta \to U \xrightarrow{\ \phi\ } X$ of $\eta$ in which $\phi$ is Kummer \'etale.
	\end{defn}
	
	We explain how to lift an ordinary geometric point $\xi : \operatorname{Spa}(l, l^+) \to X$ to a log geometric point $\widetilde{\xi}$ above it. By \cite[Proposition 2.3.13]{dllz2023logadic}, after replacing $X$ by an \'etale neighborhood, we may assume that $X$ admits a chart modeled on a sharp fs monoid $P$. Thus, as in \cite[Remark 2.3.3]{dllz2023logadic}, there is a strict closed immersion $X \to X\langle P \rangle$.
	
	Equip $\operatorname{Spa}(l, l^+)$ with the log structure $P^{\mathrm{log}}$ associated with the pre-log structure given by the composite $P \to \mathcal{O}_X(X) \to l$. This makes $(\operatorname{Spa}(l, l^+), P^{\mathrm{log}})$ an fs log point with chart $P \to l$, which we continue to denote by $\xi$. For each positive integer $m$, let $P \hookrightarrow \frac{1}{m}P$ be as in \cite[Definition 4.1.5]{dllz2023logadic}, and set
	$$\xi^{(\frac{1}{m})} := (\operatorname{Spa}(l, l^+) \times_{X\langle P \rangle} X\langle \tfrac{1}{m}P \rangle)_{\mathrm{red}},$$
	equipped with the natural log structure modeled on $\frac{1}{m}P$. Taking the reduced subspace ensures that the underlying adic space of $\xi^{(\frac{1}{m})}$ is isomorphic to $\operatorname{Spa}(l, l^+)$. It follows that
	$$\widetilde{\xi} := \varprojlim_{m} \xi^{(\frac{1}{m})}$$
	is a log geometric point above $\xi$, where the inverse limit runs over all positive integers $m$ invertible in $l$. Its underlying adic space is isomorphic to $\operatorname{Spa}(l, l^+)$, and its natural log structure is modeled on
	$$P_{\mathbb{Q}_{\geq 0}} := \varinjlim_{m} \frac{1}{m}P,$$
	where the direct limit is taken over the same set of integers.
	
	The following lemma is the Kummer \'etale analogue of the corresponding statement for geometric points in the \'etale topology.
	
	\begin{lem}
		Let $\zeta \to X$ be a log geometric point of a locally noetherian fs log adic space. Then
		$$\operatorname{Sh}(X_{\mathrm{k\acute{e}t}}) \longrightarrow {\mathrm{Sets}}, \qquad \mathcal{F} \longmapsto \mathcal{F}_{\zeta} := \varinjlim_U \mathcal{F}(U),$$
		where the direct limit runs over the Kummer \'etale neighborhoods $U$ of $\zeta$, is a fiber functor. Moreover, the fiber functors associated with log geometric points form a conservative system.
	\end{lem}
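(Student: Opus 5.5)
The plan is to follow the proof of the analogous statement for log schemes (Illusie's treatment of Kummer \'etale fiber functors) and for the \'etale site of an adic space, using the structure results of \cite{dllz2023logadic}. There are two things to check: that $\mathcal F\mapsto\mathcal F_\zeta$ is a fiber functor, and that the family of such functors is conservative.

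For the first claim, the main point is that the Kummer \'etale neighborhoods of $\zeta$ form a cofiltered category. Given two neighborhoods $U_1,U_2$, the fiber product $U_1\times_X U_2$ in the category of fs log adic spaces is again Kummer \'etale over $X$. The map $\zeta\to U_1\times_X U_2$ exists because $\zeta$ is fs-saturated; more precisely, its monoid is uniquely divisible by the relevant integers, so maps from $\zeta$ into the saturated fiber product exist. Equalizers are handled in the same way: two maps $U\rightrightarrows V$ over $X$ with the same restriction to $\zeta$ agree on a neighborhood, since Kummer \'etale maps have open diagonal up to saturation issues. Granting this, $\mathcal F_\zeta$ commutes with finite limits, and it commutes with colimits by construction. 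It remains to check that $\mathcal F_\zeta$ sends coverings to surjective families. Let $\{U_i\to U\}$ be a Kummer \'etale covering and $\zeta\to U$. The image point lies in the image of some $U_i$. The lift to $U_i$ exists Kummer-\'etale locally because every Kummer \'etale morphism is, locally, a strictly \'etale base change of a standard Kummer cover $X\langle \frac1mP\rangle\to X\langle P\rangle$ with $m$ invertible \cite{dllz2023logadic}. The strictly \'etale part lifts because $l$ is separably closed. The Kummer part lifts because $\overline M$ is uniquely $m$-divisible, which provides the needed $m$-th roots in the chart. Together these exhibit $\mathcal F\mapsto\mathcal F_\zeta$ as the stalk at a point of the topos.

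For conservativity, let $\phi:\mathcal F\to\mathcal G$ induce bijections on all $\zeta$-stalks. Given a section $s\in\mathcal G(U)$, we must show it lies locally in the image of $\phi$. Every point of $U$ underlies a geometric point $\xi$, which lifts to the log geometric point $\widetilde\xi=\varprojlim\xi^{(\frac1m)}$ constructed above. Surjectivity at $\widetilde\xi$ produces a Kummer \'etale neighborhood $V_\xi$ on which $s$ lifts. The images of the $V_\xi$ cover $U$, because their images contain all points of $U$ and Kummer \'etale maps are open. Hence $\phi$ is an epimorphism, and injectivity follows the same way. Quasi-compactness is not needed here, since the coverings are arbitrary.

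The main obstacle is identifying the stalk at $\widetilde\xi$ with a colimit over the Kummer neighborhoods $X\langle\frac1mP\rangle$ composed with strictly \'etale ones. In other words, one must show these neighborhoods are cofinal. I would deduce this from the local structure theorem for Kummer \'etale maps in \cite{dllz2023logadic}: every Kummer \'etale $U\to X$ through which $\zeta$ factors is dominated, near $\zeta$, by a strictly \'etale cover of some $X\times_{X\langle P\rangle}X\langle\frac1mP\rangle$.
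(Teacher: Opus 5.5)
The paper gives no proof of this lemma (it is recalled from \cite{dllz2023logadic}), and your outline is the standard argument used there and in the log-scheme case, so it is correct in substance: cofilteredness of the Kummer \'etale neighborhoods via saturated fiber products, lifting $\zeta$ through a member of any Kummer \'etale covering using the local structure of Kummer \'etale maps, and conservativity by lifting every geometric point to a log geometric point. Two small adjustments: the cofinality of the neighborhoods $X\times_{X\langle P\rangle}X\langle\tfrac1mP\rangle$ that you call the main obstacle is not needed for this lemma (the paper only uses it later to compute stalks), whereas the Kummer lifting step needs, besides the divisibility of $\overline{M}$, $m$-th roots of units in $l$ and a choice of roots that makes the lift land at the chosen preimage point.
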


	We will often use the following proposition.

	\begin{prop} \label{geompoint}
		Let $f:Y \to X$ be a quasi-compact quasi-separated morphism between analytic fs log adic spaces, and let $\xi:\Spa(l,l^+) \xrightarrow{i} X$ be a geometric point of $X$, with its log structure pulled back from $X.$ Denote by $g$ the pullback morphism $g:Y\times_X \xi \to \xi$ in the category of fs log adic spaces. Let $\f$ be an abelian sheaf on $Y_{\ket}$. Then for $q\geq 0,$ we have an isomorphism $$i^{-1}R^qf_{\ket,*}\f\simeq R^qg_{\ket,*}(\f|_{Y\times_X \xi}).$$
	\end{prop}

	\begin{proof}
		The proof of \cite[Lemma 4.4.27]{dllz2023logadic} shows that we have an equivalence of topoi $\xi_{\ket}^{\sim} \simeq \varprojlim U_{i,\ket}^{\sim}$, where the limit is over the filtered category of \'etale neighborhoods $i_U:\xi \to U$ in $X$. In particular, every $V \in \xi_{\ket}$ descends to some $V_{U_0} \in U_{0,\ket}$, and for $q\geq 0$ we have an isomorphism $$H^q\!\left((Y\times_{X} V)_{\mathrm{k\acute et}},\mathcal F\right)\simeq\varinjlim_{U \to U_0}H^q\!\left((Y\times_{X}V_{U_0}\times_{U_0}U)_{\mathrm{k\acute et}},\mathcal F\right).$$ After sheafification, these give $R^qg_{\ket,*}(\f|_{Y\times_X \xi})$ and $i^{-1}R^qf_{\ket,*}\f$, respectively. This completes the proof.
	\end{proof}

	\section{Log blow-ups in rigid geometry} \label{logblowup}

	In this section, we define log blow-ups of fs log rigid varieties via algebraic toric models, study their basic properties, and describe them in terms of subdivisions of fans. As an application, we show that toric singularities (i.e., singularities of fs log smooth rigid varieties) can be resolved locally by log blow-ups. We treat global resolutions in Section \ref{globalres}.

	\subsection{Definition and basic properties}

	We first introduce coherent ideals of log structures, along which log blow-ups are defined.

	\begin{defn}
		Let $X$ be an fs log rigid variety. We say that an ideal $\mathcal{J}\subset\mathcal{M}_X$ is coherent if, \'etale locally on $X$, there exist an fs chart $P\to\mathcal{M}_X$ and a finitely generated ideal $I\subset P$ such that $\mathcal{J}=I\mathcal{M}_X$. We say that $\mathcal{J}$ is invertible if it is \'etale locally generated by one element of $\mathcal{M}_X$.
	\end{defn}

	Unless otherwise stated, all ideals considered below are assumed to have nonempty geometric stalks.

	Let $P$ be a toric monoid, i.e., a sharp fs monoid. Put $T_P:=\Spec(K[P])$, equipped with its canonical log structure, and let $T_P^{\an}$ be its analytification over $K$.

	\begin{defn}
		Suppose that $X$ admits a chart $P\to\mathcal{M}_X$ and that $\mathcal{J}=I\mathcal{M}_X$ for a finitely generated ideal $I\subset P$. Let $T_{P,I}\to T_P$ be the (fs) log blow-up along $I$, obtained by saturating the monoid charts of the blow-up of $T_P$ along the monomial ideal $IK[P]$. We define
		$$\operatorname{LogBl}_{\mathcal{J}}(X):=X\times_{T_P^{\an}}T_{P,I}^{\an}.$$
	\end{defn}

	\begin{rem}
		Since the chart morphism $X\to T_P^{\an}$ is strict, the underlying adic space of the fiber product in the above definition is the ordinary fiber product of the underlying adic spaces.
	\end{rem}

	\begin{rem} \label{localblowup}
		Suppose that $I=(a_1,\ldots,a_r)+P$. For $1\leq i\leq r$, put
		$$P_i^0:=P+\sum_{j=1}^r\N(a_j-a_i)\subset P^{\mathrm{gp}},\qquad P_i:=(P_i^0)^{\mathrm{sat}}.$$
		Then $T_{P,I}$ is covered by the affine toric varieties $\Spec(K[P_i])$, and after base change to $X$, their analytifications give charts of $\operatorname{LogBl}_{\mathcal{J}}(X)$. On the $i$-th chart, the pullback of $\mathcal{J}$ is generated by $a_i$. 
	\end{rem}

	\begin{prop} \label{universal}
		Let $f:Y=\operatorname{LogBl}_{\mathcal{J}}(X)\to X$ be as above, and let $h:Z\to X$ be a morphism of fs log rigid varieties such that $\mathcal{J}\mathcal{M}_Z$ is invertible. Then $h$ factors uniquely through $f$, i.e., there exists a unique morphism $Z\to Y$ over $X$.
	\end{prop}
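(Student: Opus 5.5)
Uniqueness lets us work étale locally on $Z$ and $X$, and then glue. The plan is to reduce to the chart situation of Remark \ref{localblowup} and construct the factorization chart by chart.

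First, uniqueness. Suppose $g_1,g_2:Z\to Y$ are two factorizations. Étale locally on $Z$, each $g_k$ lands in some chart $Y_i=X\times_{T_P^{\an}}\Spa(K\langle P_i\rangle)^{\an}$. A morphism into $Y_i$ over $X$ is the same as a monoid homomorphism $P_i\to\Gamma(Z,\mathcal{M}_Z)$ extending $P\to\mathcal{M}_Z$, compatible with the structure map to $\mathcal{O}_Z$; this is the universal property of the fiber product with a chart. Since $\mathcal{M}_Z$ is integral, $P^{\mathrm{gp}}\to\mathcal{M}_Z^{\mathrm{gp}}$ is determined by $P$. Hence any extension to $P_i\subset P^{\mathrm{gp}}$ is unique. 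The remaining question is whether $g_1$ and $g_2$ land in the same charts compatibly. The locus in $Z$ where $a_j-a_i\in\mathcal{M}_Z$ for all $j$ (i.e. where $a_i$ generates $\mathcal{J}\mathcal{M}_Z$) is intrinsic to $h$, and $g_k^{-1}(Y_i)$ is exactly this locus. So $g_1=g_2$ on each piece.

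Second, existence. Work étale locally on $Z$. Since $\mathcal{J}\mathcal{M}_Z$ is invertible, near a geometric point of $Z$ it is generated by one element $m$. Since $\mathcal{J}\mathcal{M}_Z$ is generated by the images of $a_1,\dots,a_r$, we have $m=a_i+u$ in $\mathcal{M}_Z$ for some $i$ and some $u$. We also have $a_i=m+v$, so $u+v=0$ by integrality. Thus $u$ is a unit and $a_i$ itself generates the ideal. Concretely, pass to stalks $\overline{\mathcal{M}}_{Z,\bar z}$, which are sharp, so a generator is unique. Then each $a_j-a_i$ lies in $\mathcal{M}_Z$, giving a map $P_i^0\to\mathcal{M}_Z$. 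Since $\mathcal{M}_Z$ is saturated (fs), this extends to $P_i=(P_i^0)^{\mathrm{sat}}\to\mathcal{M}_Z$. Composing with $\mathcal{M}_Z\to\mathcal{O}_Z$ gives a morphism $Z\to\Spa(K\langle P_i\rangle)^{\an}$ over $T_P^{\an}$, hence a morphism $Z\to Y_i$ of log adic spaces. By the uniqueness step, these local morphisms agree on overlaps and on étale refinements, so they descend and glue to a morphism $Z\to Y$.

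Finally, the construction is independent of the chart $(P,I)$. Any two choices give $\operatorname{LogBl}_{\mathcal{J}}(X)$ objects satisfying the same universal property, so they are canonically isomorphic. This shows the definition glues globally and that the statement holds without assuming a global chart.

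The main obstacle is the chart-level universal property of the analytic fiber product. We must check that a morphism of fs log adic spaces $Z\to X\times_{T_P^{\an}}\Spec(K[P_i])^{\an}$ corresponds exactly to a compatible lift $P_i\to\mathcal{M}_Z(Z)$. This requires analytification to respect the relevant mapping property: morphisms from adic spaces into $(\Spec K[Q])^{\an}$ correspond to homomorphisms $Q\to\mathcal{O}(Z)$, with no boundedness condition, unlike $\Spa K\langle Q\rangle$. It also requires the fs fiber product to agree with the naive one here, since the map to $T_P^{\an}$ is strict. I would first try to establish this by citing the universal property of analytification together with \cite[Remark 2.3.3]{dllz2023logadic} for charts.
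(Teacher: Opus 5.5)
Your proposal is correct and follows the paper's proof. You étale-localize on $Z$ so that $\mathcal{J}\mathcal{M}_Z$ is generated by some $a_i$, note that the differences $a_j-a_i$ then lie in $\mathcal{M}_Z$, extend $P_i^0\to\mathcal{M}_Z$ to $P_i$ by saturation, map to the $i$-th chart via $\Spec(K[P_i])^{\an}$, and get uniqueness from integrality. Your sharpness argument for why a single $a_i$ generates, and your check that $g_k^{-1}(Y_i)$ is the intrinsic locus where $a_i$ generates, fill in steps the paper leaves implicit.

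Two small caveats. First, the charts in your second and third paragraphs should be $\Spec(K[P_i])^{\an}$, not $\Spa(K\langle P_i\rangle)^{\an}$; as you note yourself at the end, $\Spa(K\langle P_i\rangle)$ would impose a power-boundedness condition that the existence step cannot guarantee. Second, your closing claim that the universal property alone makes the construction glue over an étale cover overlooks effectivity of étale descent. For that, the paper additionally uses projectivity of the toric models and relative ampleness of $I\mathcal{O}_{T_{P,I}}$, though this lies outside the statement itself.
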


	\begin{proof}
		The problem is \'etale local on $Z$. Since $\mathcal{J}\mathcal{M}_Z$ is invertible, after further \'etale localization, it is generated by the image of one of $a_1,\ldots,a_r$, say $a_i$. Then the images of all differences $a_j-a_i$ lie in $\mathcal{M}_Z$. Hence the map $P^{\mathrm{gp}}\to\mathcal{M}_Z^{\mathrm{gp}}$ carries $P_i^0$ into $\mathcal{M}_Z$, and it also carries $P_i$ into $\mathcal{M}_Z$ since $\mathcal{M}_Z$ is saturated. Applying the structure morphism of $\mathcal{M}_Z$, we obtain a morphism $Z\to\Spec(K[P_i])^{\an}$, which gives the desired factorization. The uniqueness follows from integrality: the images of the differences $a_j-a_i$ are determined by the map on $P$. Therefore, the local factorizations glue. This completes the proof.
	\end{proof}

	By Proposition \ref{universal}, the construction of $\operatorname{LogBl}_{\mathcal{J}}(X)$ is independent of the choice of the chart. Moreover, the toric models are projective, and the invertible ideals $I\mathcal{O}_{T_{P,I}}$ define relatively ample line bundles which are compatible with the descent data. Therefore, by strict \'etale descent, the construction extends to an arbitrary coherent ideal $\mathcal{J}\subset\mathcal{M}_X$.

	\begin{defn}
		Let $X$ be an fs log rigid variety and let $\mathcal{J}\subset\mathcal{M}_X$ be a coherent ideal. We call the resulting morphism $\operatorname{LogBl}_{\mathcal{J}}(X)\to X$ the log blow-up of $X$ along $\mathcal{J}$.
	\end{defn}

	\begin{prop} \label{pullback}
		Let $f:Y=\operatorname{LogBl}_{\mathcal{J}}(X)\to X$ be a log blow-up, and let $h:Z\to X$ be a morphism of fs log rigid varieties. Then there is a canonical isomorphism
		$$Z\times_XY\simeq\operatorname{LogBl}_{\mathcal{J}\mathcal{M}_Z}(Z).$$
	\end{prop}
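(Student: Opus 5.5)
We will prove Proposition \ref{pullback} by checking that $Z\times_XY$ satisfies the universal property of Proposition \ref{universal} for the ideal $\mathcal{J}\mathcal{M}_Z$. The fiber product is taken in the category of fs log rigid varieties. Both sides are then objects over $Z$ representing the same functor, and the isomorphism will be canonical.

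First we check that $\mathcal{J}\mathcal{M}_Z$ is a coherent ideal of $\mathcal{M}_Z$, so that the right-hand side makes sense. \'Etale locally on $X$ we have an fs chart $P\to\mathcal{M}_X$ with $\mathcal{J}=I\mathcal{M}_X$. Working \'etale locally on $Z$ over such a neighborhood, we may choose an fs chart $Q\to\mathcal{M}_Z$ together with a chart $u:P\to Q$ of $h$; this exists by the standard chart results in \cite{dllz2023logadic}. Then $\mathcal{J}\mathcal{M}_Z=(u(I)+Q)\mathcal{M}_Z$, and $u(I)+Q$ is a finitely generated ideal of $Q$. Geometric stalks stay nonempty because they are generated by the images of the stalks of $\mathcal{J}$.

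Next we show that $\mathcal{J}\mathcal{M}_{Z\times_XY}$ is invertible. This ideal is the pullback of $\mathcal{J}\mathcal{M}_Y$, which is invertible by Remark \ref{localblowup}: on the $i$-th chart it is generated by $a_i$. The pullback of an invertible ideal is again invertible, since the local generator pulls back to a local generator. Proposition \ref{universal} therefore gives a unique $Z$-morphism
$$\phi:Z\times_XY\to\operatorname{LogBl}_{\mathcal{J}\mathcal{M}_Z}(Z).$$

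For the inverse, let $g:\operatorname{LogBl}_{\mathcal{J}\mathcal{M}_Z}(Z)\to Z$ be the log blow-up. The pullback of $\mathcal{J}$ along $h\circ g$ is $\mathcal{J}\mathcal{M}_Z$ pulled back along $g$, which is invertible. Applying Proposition \ref{universal} to $h\circ g$ gives a unique map to $Y$ over $X$. Together with $g$, this yields a $Z$-morphism
$$\psi:\operatorname{LogBl}_{\mathcal{J}\mathcal{M}_Z}(Z)\to Z\times_XY.$$

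It remains to show that $\phi\circ\psi$ and $\psi\circ\phi$ are the identity. For $\phi\circ\psi$, this follows from the uniqueness part of Proposition \ref{universal} over $Z$. For $\psi\circ\phi$, both it and the identity are maps into $Z\times_XY$ compatible with the projection to $Z$. Their compositions with the projection to $Y$ are $X$-morphisms from a space on which $\mathcal{J}$ becomes invertible, so they agree by uniqueness over $X$.

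The main obstacle is subtle: Proposition \ref{universal} is stated for morphisms of fs log rigid varieties, whereas the fs fiber product involves saturation. Saturation could a priori change the underlying space, so we must verify that $Z\times_XY$ is still an fs log rigid variety of the required type. Locally this is explicit: $Z\times_XY$ is $Z\times_{T_P^{\an}}T_{P_i}^{\an}$, saturated. Alternatively, one can compare directly with the chart description $Z\times_{T_Q^{\an}}T_{Q,u(I)+Q}^{\an}$, using that the saturated pushout of $P_i$ along $P\to Q$ is the monoid $Q_i$ attached to the generators $u(a_j)$. The fs fiber product and the log blow-up both have the fs universal property, so the uniqueness arguments above apply verbatim.
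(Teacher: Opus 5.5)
Your proposal is correct and takes the same route as the paper: both identify $Z\times_XY$ and $\operatorname{LogBl}_{\mathcal{J}\mathcal{M}_Z}(Z)$ as objects over $Z$ representing the same functor, via Proposition \ref{universal}. You make the Yoneda argument explicit with the maps $\phi,\psi$, and you also spell out two points the paper leaves implicit: the coherence of $\mathcal{J}\mathcal{M}_Z$, and the chart-level description of the fs fiber product through the saturated pushouts $Q_i$.
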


	\begin{proof}
		Let $W$ be an fs log rigid variety over $Z$. A morphism $W\to Z\times_XY$ over $Z$ is equivalent to a morphism $W\to Y$ over $X$. By Proposition \ref{universal}, the existence of such a morphism is equivalent to the invertibility of $\mathcal{J}\mathcal{M}_W$. Since the right-hand side represents the same functor, this completes the proof.
	\end{proof}

	\begin{prop} \label{logblowupproperties}
		Let $f:Y=\operatorname{LogBl}_{\mathcal{J}}(X)\to X$ be a log blow-up. Then $f$ is proper and log \'etale, and it is a monomorphism in the category of fs log rigid varieties. Moreover, $f$ is an isomorphism over the trivial locus of the log structure of $X$, and $\mathcal{J}\mathcal{M}_Y$ is invertible.
	\end{prop}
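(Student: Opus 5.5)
All of the assertions are étale local on $X$, and by Proposition \ref{pullback} log blow-ups are compatible with strict base change. I will therefore reduce to the situation of Remark \ref{localblowup}. There $X$ has a chart $P\to\mathcal{M}_X$ with $P$ toric, and $Y=X\times_{T_P^{\an}}T_{P,I}^{\an}$ is covered by the charts $X\times_{T_P^{\an}}\Spec(K[P_i])^{\an}$. Each property will be checked on the toric model $T_{P,I}\to T_P$ and then transported to $Y\to X$ through the strict morphism $X\to T_P^{\an}$.

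\textbf{Properness and invertibility.} The algebraic map $T_{P,I}\to T_P$ is the normalization (saturation) of the blow-up of $T_P$ along the monomial ideal $IK[P]$. Blow-ups are projective, and normalization is finite, so $T_{P,I}\to T_P$ is projective. Analytification preserves properness, and properness is stable under base change of adic spaces locally of finite type. By the remark following the definition of log blow-ups, the underlying space of $Y$ is the ordinary fiber product, so $f$ is proper. Invertibility of $\mathcal{J}\mathcal{M}_Y$ is the last sentence of Remark \ref{localblowup}: on the $i$-th chart, $\mathcal{J}$ is generated by $a_i$.

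\textbf{Log étaleness.} On the $i$-th chart, the morphism $Y_i\to X$ is the base change of $\Spec(K[P_i])^{\an}\to T_P^{\an}$, which admits the chart $u:P\to P_i$. The map $u^{\mathrm{gp}}$ is the identity of $P^{\mathrm{gp}}$, since $P_i\subset P^{\mathrm{gp}}$ contains $P$. So both its kernel and its cokernel are trivial. Moreover, $Y_i\to X\times_{X\langle P\rangle}X\langle P_i\rangle$ is an isomorphism. Hence $f$ is log étale by the definition of log smoothness, and with trivial cokernel the same chart criterion gives log étaleness.

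\textbf{Monomorphism and isomorphism over the trivial locus.} Let $g_1,g_2:W\to Y$ be two morphisms of fs log rigid varieties with $fg_1=fg_2=:h$. Then $\mathcal{J}\mathcal{M}_W$ is invertible, because it is the pullback of the invertible ideal $\mathcal{J}\mathcal{M}_Y$. By the uniqueness part of Proposition \ref{universal}, $g_1=g_2$, so $f$ is a monomorphism. Now let $U\subset X$ be the trivial locus. There $\mathcal{M}_U=\mathcal{O}_U^\times$, so $\mathcal{J}\mathcal{M}_U=\mathcal{M}_U$ by the standing assumption that stalks are nonempty: any section of $\mathcal{J}$ is a unit, hence generates. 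Thus $\mathcal{J}\mathcal{M}_U$ is invertible. By Proposition \ref{universal}, the inclusion $U\to X$ lifts to a section $U\to f^{-1}(U)$. Since $f$ is a monomorphism, this section composed with $f$ is the identity, and $f^{-1}(U)\to U$ is a monomorphism with a section, hence an isomorphism. By Proposition \ref{pullback}, $f^{-1}(U)=\operatorname{LogBl}_{\mathcal{J}\mathcal{M}_U}(U)$, so universality applies to it directly.

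\textbf{Main obstacle.} The main subtlety is the isomorphism claim. The monomorphism property is only known in the category of fs log spaces, so the identification $f^{-1}(U)\simeq U$ is first an isomorphism of fs log spaces; this is fine, since it is in particular an isomorphism of underlying spaces. The monomorphism argument itself relies on the uniqueness in Proposition \ref{universal}, whose proof uses integrality of $\mathcal{M}_W$. A more hands-on check is also available: on $U$ every $a_j$ maps to a unit, so each chart $Y_i\times_X U$ is all of $U$ (since $P_i\subset P^{\mathrm{gp}}$ maps into units), and the charts glue to $U$.
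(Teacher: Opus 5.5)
Your proposal is correct and follows the same route as the paper. Properness comes from the projective toric model $T_{P,I}\to T_P$ via analytification and base change, log \'etaleness comes from the chart criterion because $P\to P_i$ is an isomorphism on groupifications, and the remaining assertions come from Proposition \ref{universal}. You simply make explicit what the paper leaves implicit: invertibility via Remark \ref{localblowup}, the monomorphism via the uniqueness in Proposition \ref{universal}, and the trivial-locus isomorphism as a monomorphism with a section (there $f\circ s=\mathrm{id}_U$ holds by construction, and $s\circ f=\mathrm{id}$ is what the monomorphism property gives).
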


	\begin{proof}
		The properness follows from the properness of the algebraic morphism $T_{P,I}\to T_P$, together with analytification and base change. On the charts in Remark \ref{localblowup}, the maps $P\to P_i$ induce isomorphisms on groupifications, so $f$ is log \'etale by the chart criterion. The remaining assertions follow from Proposition \ref{universal}.
	\end{proof}

	\begin{rem}
		The fiber product in Proposition \ref{pullback} is taken in the category of fs log rigid varieties: if $h$ is not strict, the underlying adic space of $Z\times_XY$ need not be the ordinary fiber product. Similarly, for a general $X$, the log blow-up $\operatorname{LogBl}_{\mathcal{J}}(X)$ need not be the normalization of the ordinary blow-up of $X$ along $\mathcal{J}\mathcal{O}_X$. However, this identification holds if $X$ is log smooth over $K$: since normalization commutes with \'etale base change, it can be checked on strictly \'etale toric charts, where saturating the monoid charts normalizes the monomial blow-up.
	\end{rem}

	We record the following consequence of Proposition \ref{pullback}, which is useful for Kummer towers.

	\begin{cor} \label{perfectoidblowuptower}
		Assume that $K$ is perfectoid and $K^+=K^\circ$. Let $P$ be a toric monoid, let $I\subset P$ be a nonempty finitely generated ideal, and let $Y\to\mathbb{E}_P:=\Spa(K\langle P\rangle,K^\circ\langle P\rangle)$ be the log blow-up along $I\mathcal{M}_{\mathbb{E}_P}$. For each positive integer $m$, put
		$$\mathbb{E}_m:=\Spa\left(K\left\langle\tfrac1mP\right\rangle,K^\circ\left\langle\tfrac1mP\right\rangle\right),\qquad Y_m:=Y\times_{\mathbb{E}_P}\mathbb{E}_m,$$
		where $\mathbb{E}_m$ is equipped with the natural log structure modeled on $\frac{1}{m}P$. Then there exist a perfectoid space $Y_\infty$ and compatible morphisms $Y_\infty\to Y_m$ such that $Y_\infty\sim\varprojlim_mY_m$ in the sense of \cite[Definition 7.14]{scholze2012perfectoid}.
	\end{cor}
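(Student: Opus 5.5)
Since $Y\to\mathbb{E}_P$ is covered by the affinoid charts of Remark \ref{localblowup}, the plan is to build $Y_\infty$ chart by chart and then glue. Suppose $I=(a_1,\ldots,a_r)+P$ and let $P_i$ be the saturated monoids of Remark \ref{localblowup}. Then $Y$ is glued from the pieces
$$V_i:=\mathbb{E}_P\times_{T_P^{\an}}\Spec(K[P_i])^{\an}.$$
Each $V_i$ is the rational subset of $Y$ on which $a_i$ generates $\mathcal{J}\mathcal{M}_Y$. Concretely, $V_i$ is an affinoid: it is cut out by the conditions $|a_j-a_i|\le 1$ inside the affinoid toric variety for $P_i$, and more generally by conditions $|p|\le 1$ for $p\in P_i$ lying in the cone generated by $P$ and the differences $a_j-a_i$.

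The first step is to identify the $Y_m$ explicitly using Proposition \ref{pullback}. Since $\mathbb{E}_m\to\mathbb{E}_P$ is induced by $P\hookrightarrow\frac1mP$, the space $Y_m$ is the log blow-up of $\mathbb{E}_m$ along the ideal generated by $I$ in $\frac1mP$. Its charts are given by the saturations of $\frac1mP+\sum_j\N(a_j-a_i)$. Because $P_i$ is the saturation of $P+\sum_j\N(a_j-a_i)$, this saturation equals $\frac1mP_i$: the cone generated is the same, and the lattice is $\frac1mP^{\mathrm{gp}}$. Hence $Y_m$ is covered by the affinoids
$$V_{i,m}=\Spa\bigl(K\langle\tfrac1mP_i\rangle,K^\circ\langle\tfrac1mP_i\rangle\bigr),$$
carved out by the same inequalities. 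The transition maps $Y_{m'}\to Y_m$ for $m\mid m'$ respect these covers, since they come from $\frac1mP_i\subset\frac1{m'}P_i$. I would make the identification with the rational localizations of the affinoids modeled on $P_i$ carefully, because Proposition \ref{pullback} is phrased for fs fiber products, which may not be the naive fiber product.

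Next, on each chart I would set
$$V_{i,\infty}:=\Spa\Bigl(K\langle P_{i,\Q_{\ge0}}\rangle,K^\circ\langle P_{i,\Q_{\ge0}}\rangle\Bigr)$$
with the appropriate rational localization, where $K\langle P_{i,\Q_{\ge0}}\rangle$ denotes the completed colimit of the $K\langle\frac1mP_i\rangle$. This algebra is perfectoid. The quickest argument is that $P_{i,\Q_{\ge0}}$ is uniquely $p$-divisible, so the Frobenius on $K^\circ\langle P_{i,\Q_{\ge0}}\rangle/p$ is surjective with the expected kernel, which reduces to $K^\circ/p$ and the monoid being $p$-divisible. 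Rational localizations of perfectoid affinoids are again perfectoid. By construction $(K\langle P_{i,\Q_{\ge0}}\rangle^\circ)$ is the completion of the colimit of the integral rings of $V_{i,m}$. So $V_{i,\infty}\sim\varprojlim_m V_{i,m}$ in the sense of \cite[Definition 7.14]{scholze2012perfectoid}: the underlying spaces are homeomorphic to the limit, and the colimit of the affinoid rings is dense.

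Finally, I would glue. The overlaps $V_{i,m}\cap V_{j,m}$ are rational subsets defined by monomial inequalities with exponents in $P^{\mathrm{gp}}$, and these are compatible in $m$. The corresponding rational subsets of $V_{i,\infty}$ and $V_{j,\infty}$ are then canonically isomorphic, because both are $\sim$-limits of the same tower and such limits are unique by \cite[Proposition 7.16]{scholze2012perfectoid}. The cocycle condition follows from the same uniqueness, which produces $Y_\infty$ together with compatible maps to the $Y_m$. The property $Y_\infty\sim\varprojlim Y_m$ is local, so it follows from the affinoid case.

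I expect the main obstacle to be the bookkeeping in the first step: checking that the saturation of $\frac1mP+\sum_j\N(a_j-a_i)$ really is $\frac1mP_i$, and that the fs fiber product has exactly the described underlying space, with no reduced or saturation discrepancy.
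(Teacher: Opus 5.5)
Your route is the paper's. You identify the charts of $Y_m$ via $\bigl(\tfrac1mP+\sum_j\N(a_j-a_i)\bigr)^{\mathrm{sat}}=\tfrac1mP_i$, use that $K\langle P_{i,\Q}\rangle$ is perfectoid to get chartwise affinoid perfectoid $\sim$-limits, and glue along the monomial localizations on overlaps.

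One intermediate claim is false, and it is the claim you rely on to get an affinoid cover. The fiber product $V_i=\mathbb{E}_P\times_{T_P^{\an}}\Spec(K[P_i])^{\an}$ is neither affinoid nor a rational subset of $Y$. It only imposes $|p|\le 1$ for $p\in P$, and it is exactly the Zariski-open locus where $a_i$ generates $\mathcal{J}\mathcal{M}_Y$, which is in general not quasi-compact. For example, for $P=\N^2$ and $I=(x,y)$ one gets $V_1=\{|x|\le1,\ |xu|\le1\}$ with $u=y/x$ unbounded.

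The affinoid you actually use, $\Spa(K\langle P_i\rangle,K^\circ\langle P_i\rangle)$, is the strictly smaller subset of $V_i$ where $|a_j-a_i|\le 1$ for all $j$. So the fact that these affinoids still cover $Y$ does not follow from $\bigcup_i V_i=Y$, and neither does the cover of $Y_m$ by $\Spa(K\langle\frac1mP_i\rangle,K^\circ\langle\frac1mP_i\rangle)$. Both come from the fan description. By Proposition \ref{fanlogblowup} and Lemma \ref{toricrestriction}, $Y=\mathbb{E}_{\Sigma_I}$ is glued from the affinoids $\mathbb{E}_{\sigma_i}$ with $P_{\sigma_i}=P_i$. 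At level $m$ the subdivision $\Sigma_I$ is unchanged and only the lattice becomes $\frac1mP^{\mathrm{gp}}$, so $P_{\sigma_i}$ becomes $\frac1mP_i$. This is what the paper's phrase ``toric charts of $Y_m$ are modeled on $\frac1mP_i$'' relies on.

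With this correction, three simplifications follow. No extra rational localization of $V_{i,\infty}$ is needed. The covers are compatible with the transition maps, because each cone maps onto itself. The overlaps are the explicit monomial localizations $\mathbb{E}_{\sigma_i\cap\sigma_j}$ at every level, including the perfectoid one. You can therefore glue directly, as the paper does, rather than appealing to uniqueness of $\sim$-limits.
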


	\begin{proof}
		Choose generators $a_1,\ldots,a_r$ of $I$, and let $P_i$ be the monoids in Remark \ref{localblowup}. Since the saturation of $\tfrac1mP+\sum_j\N(a_j-a_i)$ is $\tfrac1mP_i$, the toric charts of $Y_m$ are modeled on the monoids $\tfrac1mP_i$. Put
		$$P_{i,\Q}:=\bigcup_{m\geq1}\tfrac1mP_i.$$
		The completed monoid algebras $K\langle P_{i,\Q}\rangle$ are perfectoid by \cite[Lemma 2.2.15]{dllz2023logadic}, and the associated affinoid perfectoid spaces are tilde-limits of the corresponding charts at finite levels. These identifications are compatible with the monomial localizations on the intersections of the charts. Hence the finitely many perfectoid charts glue to the desired perfectoid space $Y_\infty$. This completes the proof.
	\end{proof}

	\subsection{The log adic space associated with a fan}

	We first fix some basic notions in toric geometry.

	\begin{defn}
		Let $N$ be a free abelian group of finite rank, and put $N_\R:=N\otimes_\Z\R$ and $M:=\operatorname{Hom}(N,\Z)$. A strongly convex rational polyhedral cone is a cone of the form $\sigma=\sum_i\R_{\geq0}v_i\subset N_\R$ with finitely many $v_i\in N$ such that $\sigma\cap(-\sigma)=\{0\}$. A fan $\Sigma$ is a finite collection of such cones which is stable under taking faces and such that the intersection of any two cones in $\Sigma$ is a common face. 
	\end{defn}

	We also recall the notion of regularity, which corresponds to the smoothness of toric varieties.

	\begin{defn}
		A cone is called regular if its primitive ray generators form part of a basis of $N$, and a fan is called regular if all its cones are regular.
	\end{defn}

	As in classical toric geometry, one can associate a log adic space with a fan: the following definition comes from \cite[Section 8]{scholze2012perfectoid}.

	\begin{defn}
		Let $\Sigma$ be a fan. For $\sigma\in\Sigma$, put $P_{\sigma}:=\sigma^\vee\cap M$, and denote by $\mathbb{E}_{\sigma}$ the log adic space $\Spa(K\langle P_{\sigma}\rangle,K^+\langle P_{\sigma}\rangle)$, equipped with the log structure associated with the chart $P_{\sigma}$. For a face $\tau\preceq\sigma$, the monoid $P_\tau$ is a localization of $P_{\sigma}$, and the induced morphism $\mathbb{E}_\tau\to\mathbb{E}_{\sigma}$ is the rational open immersion onto the locus where the localized monomials and their inverses are power bounded. We denote by $\mathbb{E}_{\Sigma}$ the log adic space obtained by gluing the $\mathbb{E}_{\sigma}$, $\sigma\in\Sigma$, and by $\mathcal{M}_{\Sigma}$ its log structure induced by the charts.
	\end{defn}

	Let $T_{\Sigma}$ be the toric variety over $K$ associated with $\Sigma$. Then each $\mathbb{E}_{\sigma}$ is an affinoid open subspace of $T_{\sigma}^{\an}$, and these embeddings identify $\mathbb{E}_{\Sigma}$ with an open subspace of $T_{\Sigma}^{\an}$. The following lemma shows that subdivisions in the algebraic setting pass to the rigid analytic setting.

	\begin{lem} \label{toricrestriction}
		Let $\Sigma'\to\Sigma$ be a subdivision. Then
		$$\mathbb{E}_{\Sigma'}=\mathbb{E}_{\Sigma}\times_{T_{\Sigma}^{\an}}T_{\Sigma'}^{\an}.$$
		In particular, the morphism $\mathbb{E}_{\Sigma'}\to\mathbb{E}_{\Sigma}$ is proper. Moreover, if $K^+=K^\circ$, then $\mathbb{E}_{\Sigma}$ is the rigid generic fiber of the formal completion of the toric $K^\circ$-model, and it equals $T_{\Sigma}^{\an}$ when $\Sigma$ is complete.
	\end{lem}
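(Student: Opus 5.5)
The plan is to reduce the identity $\mathbb{E}_{\Sigma'}=\mathbb{E}_{\Sigma}\times_{T_{\Sigma}^{\an}}T_{\Sigma'}^{\an}$ to a statement about a single cone, and then compare affinoid opens inside analytifications. Both sides are glued from pieces indexed by the cones of $\Sigma$. Since $\mathbb{E}_\sigma\subset T_\sigma^{\an}$ and $T_{\Sigma}^{\an}$ is covered by the $T_\sigma^{\an}$, it suffices to prove, for each $\sigma\in\Sigma$ with restricted subdivision $\Sigma'_\sigma=\{\sigma'\in\Sigma' : \sigma'\subset\sigma\}$, that
$$\mathbb{E}_{\Sigma'_\sigma}=\mathbb{E}_\sigma\times_{T_\sigma^{\an}}T_{\Sigma'_\sigma}^{\an}.$$
Compatibility with the gluing along faces is automatic, because both sides are open subspaces of $T_{\Sigma'}^{\an}$. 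So the first step is to show that the two open subspaces of $T_{\Sigma'_\sigma}^{\an}$ coincide.

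Inside $T_{\sigma'}^{\an}$, the space $\mathbb{E}_{\sigma'}$ is the locus where all monomials $\chi^m$ with $m\in P_{\sigma'}$ satisfy $|\chi^m|\le 1$ (and lie in $K^+$-power-bounded position). The preimage of $\mathbb{E}_\sigma$ is the locus where $|\chi^m|\le1$ for $m\in P_\sigma$. Since $\sigma'\subset\sigma$, we have $P_\sigma\subset P_{\sigma'}$, which gives the inclusion $\mathbb{E}_{\sigma'}\subset \mathbb{E}_\sigma\times T_{\sigma'}^{\an}$.

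The converse containment fails on a single chart, so it must be proved on the union. Take a point $x$ of $\mathbb{E}_\sigma\times_{T_\sigma^{\an}}T_{\Sigma'_\sigma}^{\an}$, with valuation $v$ on the torus coordinates where it lies on the torus. The map $m\mapsto -\log|\chi^m(x)|$ defines a "tropical" point of $N_\R$ (extended by $+\infty$ on the boundary strata). The condition $x\in\mathbb{E}_\sigma$ says this point lies in $\sigma$, or in its partial compactification. Because $\Sigma'_\sigma$ covers $\sigma$, the point lies in some $\sigma'$, and then $|\chi^m(x)|\le1$ for all $m\in P_{\sigma'}$, so $x\in\mathbb{E}_{\sigma'}$.

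The main obstacle is doing this rigorously for non-torus points and for general $K^+$, i.e.\ for higher-rank points. I would handle it by stratifying by torus orbits: on the orbit of a face $\tau$, restrict to the quotient fan, and argue with the rank-one generization of $x$ (the condition $|f|\le1$ is detected there for monomials). The $K^+$ condition is then preserved because the monomials involved are the same.

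Properness then follows by base change: $T_{\Sigma'}\to T_\Sigma$ is proper since the subdivision has the same support, analytification preserves properness, and properness is stable under base change along $\mathbb{E}_\Sigma\to T_\Sigma^{\an}$.

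For the last assertion, with $K^+=K^\circ$, let $\mathfrak{T}_\Sigma$ be the toric $K^\circ$-scheme glued from $\Spec K^\circ[P_\sigma]$. Its $p$-adic completion is glued from $\operatorname{Spf}K^\circ\langle P_\sigma\rangle$, whose generic fibers are $\Spa(K\langle P_\sigma\rangle,K^\circ\langle P_\sigma\rangle)=\mathbb{E}_\sigma$, and the gluings match. When $\Sigma$ is complete, $\mathfrak{T}_\Sigma$ is proper over $K^\circ$. The rigid generic fiber of the completion of a proper scheme equals the analytification of its generic fiber, by the valuative criterion (every $K$-point extends to $K^\circ$, i.e.\ the tropical point lies in some cone). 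Hence $\mathbb{E}_\Sigma=T_\Sigma^{\an}$.
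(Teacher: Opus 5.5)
The paper states this lemma without proof, so I am judging your argument on its own terms. Several parts are fine: the reduction to a single cone via $f^{-1}(\mathbb{E}_\Sigma)=\bigcup_\sigma f^{-1}(\mathbb{E}_\sigma)$, the inclusion $\mathbb{E}_{\Sigma'_\sigma}\subseteq f^{-1}(\mathbb{E}_\sigma)$, the tropical argument for rank-one torus points, properness by base change, and the identification of $\mathbb{E}_\sigma$ with the generic fibre of $\operatorname{Spf}K^\circ\langle P_\sigma\rangle$.

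The gap is exactly at the step you flagged. It is \emph{not} true that $|\chi^m(x)|\le 1$ can be tested at the rank-one generization $x_{\max}$, because sets $\{|f|\le1\}$ are stable under generization but not under specialization.

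For example, let $\sigma=\R_{\ge0}^2$ and let $\Sigma'$ be the star subdivision at $e_1+e_2$. Then $\mathbb{E}_{\sigma_1}=\{|T_1|\le|T_2|\le1\}$ and $\mathbb{E}_{\sigma_2}=\{|T_2|\le|T_1|\le1\}$. Write $T_i=\varpi U_i$, and let $x$ be the rank-two specialization of the Gauss point of $\{|T_1|=|T_2|=|\varpi|\}$ given by $\operatorname{ord}_{U_1=0}$ on the residue field $\tilde k(U_1,U_2)$. Then $|T_1(x)/T_2(x)|$ is infinitesimally smaller than $1$, so $x\in\mathbb{E}_{\sigma_1}\setminus\mathbb{E}_{\sigma_2}$, while $x_{\max}$ lies in both.

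So a cone chosen from $\operatorname{trop}(x_{\max})$ need not contain $x$. When $\operatorname{trop}(x_{\max})$ lies on a common face, the correct cone is dictated by the higher-order part of the valuation of $x$, which $x_{\max}$ forgets. The orbit stratification does not address this, since the same problem recurs on each orbit. For $K^+\neq K^\circ$, no rank-one argument can work at all: even quasi-compact opens are not determined by rank-one points, e.g.\ $\{|a|\le 1\}\subsetneq\Spa(K,K^+)$ for $a\in K^\circ\setminus K^+$.

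The missing idea is to apply the valuative criterion to the valuation ring $k(x)^+$ of $x$ itself, of arbitrary rank. This is the tool you already invoke in your last paragraph. Since $K\not\subset k(x)^+$, work with the integral toric models:
\begin{itemize}
\item If $x\in f^{-1}(\mathbb{E}_\sigma)$, then $\chi^m\mapsto\chi^m(x)$ defines $\Spec k(x)^+\to\Spec\Z[P_\sigma]$, compatible with the point $\Spec k(x)\to T_{\Sigma'_\sigma,\Z}$ underlying $x$.
\item Since $T_{\Sigma'_\sigma,\Z}\to\Spec\Z[P_\sigma]$ is proper, this lifts to $\Spec k(x)^+\to T_{\Sigma'_\sigma,\Z}$.
\item Since $\Spec k(x)^+$ is local, the lift factors through a single chart $\Spec\Z[P_{\sigma'}]$.
\end{itemize}
This says exactly that $x\in T_{\sigma'}^{\an}$ and $|\chi^m(x)|\le1$ for all $m\in P_{\sigma'}$, i.e.\ $x\in\mathbb{E}_{\sigma'}$. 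The argument is uniform for torus and boundary points and for all ranks, so the stratification becomes unnecessary. For torus points you can equivalently embed the value group lexicographically in $\R^k$ and observe that $v_1+\epsilon v_2+\cdots+\epsilon^{k-1}v_k$ stays in one cone of $\Sigma'_\sigma$ for all small $\epsilon>0$.

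The same correction applies to your final step. Extending $K$-points only controls classical points. To get $\mathbb{E}_\Sigma=T_\Sigma^{\an}$ as adic spaces, apply the valuative criterion for the proper $K^\circ$-scheme $\mathfrak{T}_\Sigma$ to $K^\circ\to k(x)^+$ for every point $x$.
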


	Next, we recall the relation between monoid ideals and subdivisions.

	\begin{defn}
		Let $\sigma\subset N_\R$ be a strongly convex rational polyhedral cone, and let $I\subset P_{\sigma}$ be an ideal generated by $a_1,\ldots,a_r$. The support function of $I$ is
		$$\varphi_I(v):=\min_i\langle a_i,v\rangle,\qquad v\in\sigma.$$
		For $1\leq i\leq r$, put
		$$\sigma_i:=\{v\in\sigma\mid\langle a_i,v\rangle\leq\langle a_j,v\rangle\text{ for all }j\}.$$
		The maximal cones among the $\sigma_i$, together with all their faces, form a subdivision $\Sigma_I$ of $\sigma$, called the subdivision associated with $I$. Then $\varphi_I$ is a concave piecewise linear function, and the maximal cones of $\Sigma_I$ are its maximal domains of linearity.
	\end{defn}

	\begin{prop} \label{fanlogblowup}
		Let $I\subset P_{\sigma}$ be as above. Then the morphism $\mathbb{E}_{\Sigma_I}\to\mathbb{E}_{\sigma}$ is the log blow-up along $I\mathcal{M}_{\mathbb{E}_{\sigma}}$. Moreover, its underlying morphism is the normalization of the ordinary blow-up along the monomial ideal generated by the $a_i$.
	\end{prop}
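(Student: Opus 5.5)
We need to show two things: that $\mathbb{E}_{\Sigma_I}\to\mathbb{E}_{\sigma}$ is the log blow-up along $I\mathcal{M}_{\mathbb{E}_\sigma}$, and that its underlying morphism is the normalization of the blow-up along $(a_1,\ldots,a_r)$. We reduce the first claim to the algebraic toric statement and transfer it via Lemma \ref{toricrestriction}. Since $\mathbb{E}_\sigma$ has the chart $P_\sigma$, the definition of the log blow-up gives
$$\operatorname{LogBl}_{I\mathcal{M}}(\mathbb{E}_\sigma)=\mathbb{E}_\sigma\times_{T_{P_\sigma}^{\an}}T_{P_\sigma,I}^{\an}.$$
By Lemma \ref{toricrestriction}, $\mathbb{E}_{\Sigma_I}=\mathbb{E}_\sigma\times_{T_\sigma^{\an}}T_{\Sigma_I}^{\an}$. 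It therefore suffices to identify $T_{P_\sigma,I}$ with $T_{\Sigma_I}$ over $T_\sigma=T_{P_\sigma}$, compatibly with log structures.

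For this we compare affine charts. By Remark \ref{localblowup}, $T_{P_\sigma,I}$ is covered by $\Spec K[P_i]$ with
$$P_i=\Bigl(P_\sigma+\sum_j\N(a_j-a_i)\Bigr)^{\mathrm{sat}}.$$
The key monoid computation is $P_i=\sigma_i^\vee\cap M$. Indeed, by definition $\sigma_i=\sigma\cap\bigcap_j\{\langle a_j-a_i,\cdot\rangle\geq0\}$, so by cone duality $\sigma_i^\vee=\sigma^\vee+\sum_j\R_{\geq0}(a_j-a_i)$. Taking lattice points, the saturation of the finitely generated monoid $P_\sigma+\sum_j\N(a_j-a_i)$ in $M$ is exactly $\sigma_i^\vee\cap M$, by Gordan's lemma and the fact that the rational cone it generates is $\sigma_i^\vee$.

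Next, for non-maximal $\sigma_i$ (those with empty interior, i.e.\ lying in a face of a larger $\sigma_k$), we check that $\Spec K[P_i]$ is an open subset of $\Spec K[P_k]$, namely a localization at a face. Hence the maximal cones of $\Sigma_I$ already cover. The gluing data on both sides is given by monomial localizations along common faces $\sigma_i\cap\sigma_k$, so the identification glues to $T_{P_\sigma,I}\simeq T_{\Sigma_I}$. The log structures agree because both are defined by the charts $P_i$. Base changing along the strict morphism $\mathbb{E}_\sigma\to T_\sigma^{\an}$ then gives the first claim.

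For the normalization claim, the ordinary blow-up of $\Spec K[P_\sigma]$ along $(a_1,\ldots,a_r)$ is covered by $\Spec K[P_i^0]$, and each $P_i^0$ is a finitely generated monoid in $M$. By Hochster's criterion, $K[P_i^{0,\mathrm{sat}}]=K[P_i]$ is the normalization of $K[P_i^0]$. Hence $T_{\Sigma_I}$ is the normalization of the algebraic blow-up. To pass to the analytic side we use two facts: normalization commutes with analytification, and the blow-up along a coherent ideal commutes with the flat base change $\mathbb{E}_\sigma\to T_\sigma^{\an}$ (an open immersion onto an affinoid). The remaining check is that normalization is compatible with restriction to the affinoid $\mathbb{E}_\sigma$. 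This holds because $K\langle P_i\rangle$ is normal; alternatively, it follows from the remark after Proposition \ref{logblowupproperties}, since $\mathbb{E}_\sigma$ is log smooth over $K$.

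The main obstacle is the cone-duality identification $P_i=\sigma_i^\vee\cap M$ together with the verification that the charts glue exactly like the fan charts, including the degenerate $\sigma_i$. I would handle the degenerate cases via the support function $\varphi_I$: the maximal domains of linearity of $\varphi_I$ are precisely the full-dimensional $\sigma_i$.
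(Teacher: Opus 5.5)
Your proposal is correct and follows essentially the same route as the paper. Both arguments rest on the cone-duality computation $P_i=(P_i^0)^{\mathrm{sat}}=\sigma_i^\vee\cap M$, the resulting identification $T_{P_\sigma,I}\simeq T_{\Sigma_I}$ with matching gluings, restriction to $\mathbb{E}_\sigma$ via Lemma \ref{toricrestriction}, and saturation as the normalization of the monomial blow-up charts. Your extra details, namely that lower-dimensional $\sigma_i$ are faces of maximal ones and that normalization transfers to the analytic side, are points the paper leaves implicit; for the latter, the locality of normalization on the open $\mathbb{E}_\sigma\subset T_\sigma^{\an}$ already suffices.
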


	\begin{proof}
		Let $P_i^0$ and $P_i$ be the monoids in Remark \ref{localblowup} for $P=P_{\sigma}$. Then we have
		$$\operatorname{Cone}(P_i^0)^\vee=\{v \in \sigma \mid\langle a_j-a_i,v\rangle \geq 0 \text{ for all }j\}=\sigma_i,$$
		and hence $P_i=\sigma_i^\vee\cap M=P_{\sigma_i}$. Therefore, the log blow-up $T_{P_{\sigma},I}$ in the algebraic setting is $T_{\Sigma_I}$ with the same gluings, and the first statement follows by restricting to $\mathbb{E}_{\Sigma_I}$. The last assertion follows since saturating the monoids normalizes the charts of the monomial blow-up.
	\end{proof}

	\begin{defn}
		We say that a family $(I_{\sigma})_{\sigma \in \Sigma}$ of finitely generated ideals $I_{\sigma} \subset P_{\sigma}$ is compatible if $I_\tau=I_{\sigma}+P_\tau$ for every face $\tau \preceq\sigma$. Such a family defines a coherent ideal $\mathcal{I} \subset \mathcal{M}_{\Sigma}$, and the subdivisions $\Sigma_{I_{\sigma}}$ glue to a subdivision $\Sigma_{\mathcal{I}}$ of $\Sigma$. A subdivision of $\Sigma$ is called coherent if it is obtained in this way.
	\end{defn}

	\begin{ex}
		Suppose that $\sigma=\sum_{i=1}^n\R_{\geq 0}e_i$ is regular, and let $m_1,\ldots,m_n$ be the dual basis. For $v=e_1+\cdots+e_r$, the ideal $(m_1,\ldots,m_r)+P_{\sigma}$ gives the star subdivision at $v$, and the underlying morphism of the corresponding log blow-up is the ordinary blow-up along the closed subspace $V(m_1,\ldots,m_r)$.
	\end{ex}

	Combined with classical results in toric geometry, our construction gives the following local resolution of toric singularities.

	\begin{cor} \label{localresolution}
		Let $X$ be an fs log rigid variety log smooth over $K$. Then, \'etale locally on $X$, there exists a log blow-up $Y\to X$ such that $Y$ is smooth over $K$ and its log structure is the compactifying log structure associated with a strictly normal crossing divisor.
	\end{cor}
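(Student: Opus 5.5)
Étale locally, the local description of log smooth spaces recalled earlier gives a sharp fs monoid $P$ and a strictly étale morphism $X\to\mathbb{E}_P=\Spa(K\langle P\rangle,K^+\langle P\rangle)$; the problem is étale local on $X$, so we work in this situation. Set $\sigma:=\operatorname{Cone}(P)^\vee\subset N_\R$ with $N:=\operatorname{Hom}(P^{\mathrm{gp}},\Z)$, so that $P=P_\sigma$ ($P$ is saturated and sharp, hence $\sigma$ is strongly convex and full-dimensional) and $\mathbb{E}_P=\mathbb{E}_\sigma$.

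By classical toric geometry there is a regular subdivision $\Sigma'$ of $\sigma$ which is coherent, i.e.\ of the form $\Sigma_I$ for a finitely generated ideal $I\subset P_\sigma$. Concretely, we first take a projective regular refinement, e.g.\ via iterated star subdivisions. Each star subdivision at a primitive vector $v$ is the domain-of-linearity decomposition of a concave piecewise linear function with integral values on $N$, so it is given by an ideal. A composite of such subdivisions is again of the form $\Sigma_I$, since sums of the corresponding concave support functions correspond to products of ideals, after passing to a sufficiently divisible power to handle integrality. Alternatively, we simply cite the existence of projective toric resolutions (Kempf--Knudsen--Mumford--Saint-Donat), which yields a strictly concave integral piecewise linear $\varphi$ on $\Sigma'$; the finitely many $a\in M$ agreeing with $\varphi$ on maximal cones then generate an ideal $I$ with $\Sigma_I=\Sigma'$. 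Proposition \ref{fanlogblowup} identifies $\mathbb{E}_{\Sigma_I}\to\mathbb{E}_\sigma$ with the log blow-up along $I\mathcal{M}_{\mathbb{E}_\sigma}$. By Proposition \ref{pullback} (the chart map is strict), $Y:=\operatorname{LogBl}_{I\mathcal{M}_X}(X)=X\times_{\mathbb{E}_\sigma}\mathbb{E}_{\Sigma_I}$, and $Y\to\mathbb{E}_{\Sigma_I}$ is strictly étale.

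For each maximal cone $\tau\in\Sigma_I$, regularity gives $P_\tau\simeq\N^r\oplus\Z^{n-r}$, so $\mathbb{E}_\tau$ is a product of a closed polydisc and an annulus-type torus. Hence it is smooth over $K$, and its log structure is the compactifying log structure of the divisor $\{t_1\cdots t_r=0\}$, which is strictly normal crossing. These properties are stable under strictly étale base change: smoothness is clear, the divisor pulls back to a strictly normal crossing divisor since its components $V(t_i)$ pull back to smooth divisors (possibly disconnected, which is still allowed), and the log structure remains the compactifying one, e.g.\ by the proposition identifying $\mathcal{M}$ with $j_*\mathcal{O}_U^\times\cap\mathcal{O}_X$ for log smooth $Y$.

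The main obstacle is the combinatorial input that a regular refinement can be chosen coherent, that is, projective. I would handle it by quoting the standard toric result rather than reproving it, and by checking that the translation from strictly concave support functions to monoid ideals matches the sign conventions in the definition of $\Sigma_I$; this is a minimum of linear forms, so $\varphi$ must be concave rather than convex.
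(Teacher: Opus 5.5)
Your proposal is correct and follows essentially the same route as the paper: a strictly \'etale toric chart, a coherent regular subdivision $\Sigma'=\Sigma_I$ of $\sigma$ taken from classical toric geometry (the paper cites Kato's Proposition 9.8 rather than KKMS, so it avoids your support-function bookkeeping), identification of $X\times_{\mathbb{E}_\sigma}\mathbb{E}_{\Sigma'}$ with the log blow-up via Propositions \ref{fanlogblowup} and \ref{pullback}, and smoothness together with the strictly normal crossing log structure deduced from $P_\tau\simeq\N^r\oplus\Z^{\operatorname{rk}N-r}$ and carried along the strictly \'etale base change. The only cosmetic difference is that the paper's local chart is $X\to\mathbb{E}_\sigma\times\mathbb{B}^d$, with a polydisc factor carrying the trivial log structure; this changes nothing in the argument.
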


	\begin{proof}
		Since the problem is \'etale local on $X$, by \cite[Proposition 3.1.10]{dllz2023logadic}, we may assume that there exist a sharp fs monoid $P$ and a strictly \'etale morphism $X\to\mathbb{E}_{\sigma} \times \mathbb{B}^d$, where $N:=\operatorname{Hom}(P^{\mathrm{gp}},\Z)$, $\sigma:=\operatorname{Hom}(P,\R_{\geq 0})\subset N_\R$, and $\mathbb{B}^d$ is a polydisc equipped with the trivial log structure. By \cite[Proposition 9.8]{kato1994toric}, there exists a coherent subdivision $\Sigma'\to\sigma$ such that $\Sigma' $ is regular. By Propositions \ref{fanlogblowup} and \ref{pullback}, we obtain the associated log blow-up
		$$Y:=X\times_{\mathbb{E}_{\sigma}}\mathbb{E}_{\Sigma'}\longrightarrow X.$$
		For a cone $\tau\in\Sigma'$ of dimension $r$, since $\tau$ is regular, we have $P_\tau\simeq\N^r\oplus\Z^{\operatorname{rk}N-r}$. Hence $\mathbb{E}_{\Sigma'}$ is smooth. Since the induced morphism $Y\to\mathbb{E}_{\Sigma'}\times\mathbb{B}^d$ is the base change of $X\to\mathbb{E}_{\sigma}\times\mathbb{B}^d$, it is strictly \'etale, and hence $Y$ is also smooth and its log structure is the compactifying log structure associated with a strictly normal crossing divisor. This completes the proof.
	\end{proof}

	\begin{rem}
		The regular subdivision chosen in the above proof depends on the local chart, so these local log blow-ups cannot be glued in general. We construct global resolutions in Section \ref{globalres}.
	\end{rem}

	\section{Proper base change}
	
	\subsection{Comparison between Kummer \'etale and \'etale cohomology}
	
	Let $X$ be a locally noetherian fs log adic space, let $n$ be a positive integer invertible in $\mathcal O_X$, and put $\Lambda :=\Z/n\Z$. Let $\epsilon:X_{\ket}\to X_{\et}$ denote the natural morphism of sites. We briefly recall the computation of $R\epsilon_*\Lambda$ from \cite[Lemma 4.4.29]{dllz2023logadic}.
	
	Consider the sequence $$1\to \Lambda(1) \to \mathcal{M}^{\mathrm{gp}}_{X_{\ket}} \xrightarrow{\times n} \mathcal{M}^{\mathrm{gp}}_{X_{\ket}} \to 1$$ on $X_{\ket},$ which is exact by the discussion before \cite[Lemma 4.4.29]{dllz2023logadic}. The boundary map factors through the characteristic monoid and gives a map $$\overline{\mathcal{M}}^{\mathrm{gp}}_{X_{\et}}\otimes\Lambda\to R^1\epsilon_*\Lambda(1).$$ Taking cup products, as in \cite[Lemma 4.4.29]{dllz2023logadic}, induces a natural map $$c:\bigwedge^q\overline{\mathcal{M}}^{\mathrm{gp}}_{X_{\et}}\otimes\Lambda(-q)\to R^q\epsilon_*\Lambda$$ for every $q\geq0$. By the same lemma, we have
	
	\begin{theorem}  \label{kummervsclassical}
		For every $q\geq0$, the map $c$ is an isomorphism $$c:\bigwedge^q \overline{\mathcal{M}}^{\mathrm{gp}}\otimes \Lambda(-q)\xrightarrow{\simeq} R^q\epsilon_*\Lambda.$$
	\end{theorem}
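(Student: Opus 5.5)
Both sides are étale sheaves on $X$, so I will check that $c$ is an isomorphism on stalks at an arbitrary geometric point $\overline{x}\to X$. Proposition \ref{geompoint} works in the same way for $\epsilon$ (equivalently, the topos $\xi_{\ket}^\sim$ is the limit of the topoi of étale neighbourhoods). So the stalk $(R^q\epsilon_*\Lambda)_{\overline{x}}$ equals $H^q_{\ket}(\xi,\Lambda)$, where $\xi=\Spa(l,l^+)$ is the geometric point equipped with the pulled-back fs log structure. We may choose $\xi$ so that its log structure has a chart by the sharp fs monoid $P=\overline{\mathcal M}_{\overline{x}}$. The left side has stalk $\bigwedge^q P^{\mathrm{gp}}\otimes\Lambda(-q)$. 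The map $c$ is built from the Kummer sequence and cup products, so it is compatible with this base change. It therefore suffices to treat the case $X=\xi$ an fs log point with separably closed residue field.

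For a log point, I will describe $\xi_{\ket}$ explicitly. Every Kummer étale cover is dominated by the standard covers $\xi^{(\frac1m)}\to\xi$ from Section \ref{loggeompt}, where $m$ is invertible in $l$ (in particular prime to $n$ after passing to the relevant part). These are Galois with group $\operatorname{Hom}(\frac1mP^{\mathrm{gp}}/P^{\mathrm{gp}},\mu_m)$. Passing to the limit, the log geometric point $\widetilde\xi$ has no nontrivial Kummer étale covers, since $l$ is separably closed and $P_{\Q_{\ge0}}$ is divisible. Hence $\xi_{\ket}^\sim$ is equivalent to the classifying topos of the profinite group
$$\pi:=\varprojlim_m\operatorname{Hom}\bigl(\tfrac1mP^{\mathrm{gp}}/P^{\mathrm{gp}},\mu_m\bigr)\simeq\operatorname{Hom}(P^{\mathrm{gp}},\widehat{\Z}'(1)),$$
where $\widehat\Z'$ denotes the prime-to-residue-characteristic part. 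This gives $H^q_{\ket}(\xi,\Lambda)=H^q(\pi,\Lambda)$. The group $\pi$ is a free $\widehat\Z'$-module of rank $r=\operatorname{rk}P^{\mathrm{gp}}$, so $H^q(\pi,\Lambda)\simeq\bigwedge^q H^1(\pi,\Lambda)$ by the Künneth formula for $\widehat\Z'^r$, and $H^1(\pi,\Lambda)=\operatorname{Hom}(\pi,\Lambda)\simeq P^{\mathrm{gp}}\otimes\Lambda(-1)$.

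It remains to check that $c$ realizes this identification. For $q=1$, the Kummer boundary of $a\in P^{\mathrm{gp}}$ is the torsor of $n$-th roots of $a$. Its class in $H^1(\pi,\Lambda(1))$ is the homomorphism $\pi\to\mu_n$ given by evaluation at $a/n$, so the $q=1$ map is the isomorphism above. For general $q$, both sides are compatible with cup products, and the cohomology ring of $\widehat\Z'^r$ with $\Lambda$-coefficients is the exterior algebra on $H^1$; this gives the claim. I expect the main obstacle to be the identification of $\xi_{\ket}^\sim$ with $B\pi$, which requires a classification of Kummer étale covers of an fs log point by the standard Kummer covers. This is Abhyankar-type input, which I would take from \cite{dllz2023logadic}, \S4.
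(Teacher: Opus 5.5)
Your proposal is correct and is essentially the standard Kato--Nakayama argument: stalks at geometric points, reduction to an fs log point $\xi$, identification of $H^q_{\ket}(\xi,\Lambda)$ with the cohomology of $\pi\simeq\operatorname{Hom}(P^{\mathrm{gp}},\widehat{\Z}'(1))$, and the Kummer boundary realizing $H^1$; the paper gives no argument of its own and simply imports this computation by citing \cite[Lemma 4.4.29]{dllz2023logadic}. One minor imprecision: when $l^+\neq\mathcal O_l$ the space $\Spa(l,l^+)$ has more than one point, so $\xi_{\ket}^\sim$ is not literally the classifying topos of $\pi$; however, looking at the closed point shows that every Kummer \'etale covering of $\xi$ is still refined by some $\xi^{(\frac1m)}\to\xi$, so $H^q_{\ket}(\xi,\Lambda)=H^q(\pi,\Lambda)$ still holds and the rest of your argument goes through unchanged.
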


	\begin{cor} \label{kummervsclassicalcoh}
		We have an isomorphism $$\bigwedge^q \overline{\mathcal{M}}^{\mathrm{gp}}\otimes (\mathcal{O}_{X_{\et}}^+/p)(-q) \xrightarrow{\simeq}R^q\epsilon_*(\mathcal{O}_{X_{\ket}}^+/p).$$
	\end{cor}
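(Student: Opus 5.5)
We work on a locally noetherian fs log adic space $X$ with $p$ invertible. We want to show that for every $q\geq0$ the natural map
$$\bigwedge^q \overline{\mathcal{M}}^{\mathrm{gp}}\otimes (\mathcal{O}_{X_{\et}}^+/p)(-q)\to R^q\epsilon_*(\mathcal{O}_{X_{\ket}}^+/p)$$
is an isomorphism. The plan is to run the argument of Theorem \ref{kummervsclassical} with $\Lambda$ replaced by $\mathcal{O}^+/p$. The map itself is built by cup product: the map $c$ of Theorem \ref{kummervsclassical} with $n=p$ (so $\Lambda=\Z/p$) is paired with the natural map
$$\epsilon^{-1}(\mathcal O^+_{X_{\et}}/p)\to \mathcal O^+_{X_{\ket}}/p,$$
giving
$$\bigwedge^q\overline{\mathcal{M}}^{\mathrm{gp}}\otimes\Lambda(-q)\otimes_\Lambda \mathcal O^+/p\longrightarrow R^q\epsilon_*\Lambda\otimes \mathcal O^+/p\longrightarrow R^q\epsilon_*(\mathcal O^+/p).$$

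It suffices to check the isomorphism on stalks at geometric points $\xi$ of $X$. Computing $(R^q\epsilon_*\f)_\xi$ by the argument of Proposition \ref{geompoint} (the limit of topoi over \'etale neighborhoods) reduces us to Kummer \'etale cohomology of the log point $\xi=\Spa(l,l^+)$ with log structure pulled back from $X$. Let $\overline M:=\overline{\mathcal M}_{X,\xi}$, a sharp fs monoid.

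Following \cite[Lemma 4.4.29]{dllz2023logadic}, $\xi_{\ket}$ is equivalent to the classifying site of the profinite group
$$\Gamma=\operatorname{Hom}(\overline M^{\mathrm{gp}},\widehat{\Z}'(1)),$$
where $\widehat{\Z}'$ denotes the prime-to-$\operatorname{char}$ part of $\widehat{\Z}$. Under this equivalence the Kummer covers are given by the spaces $\xi^{(\frac1m)}$ of Subsection \ref{loggeompt}. Their underlying adic spaces are all $\Spa(l,l^+)$, since $l$ is separably closed and the reduced subspace is taken. Consequently $\mathcal O^+/p$ corresponds to the $\Gamma$-module $l^+/p$ with trivial action: the sheaf $\mathcal O^+$ on $\xi_{\ket}$ evaluates to $l^+$ on every Kummer cover, with $\Gamma$ acting trivially. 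The step to be careful with is exactly this identification of the values on the covers $\xi^{(\frac1m)}$ and their fiber products, which are disjoint unions of copies of $\Spa(l,l^+)$ indexed by quotients of $\Gamma$.

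With that in hand, the stalk becomes continuous group cohomology:
$$H^q(\Gamma,l^+/p)\simeq H^q(\Gamma,\Z/p)\otimes_{\Z/p}l^+/p.$$
This holds because $\Gamma\simeq\widehat{\Z}'(1)^r$ with $r=\operatorname{rk}\overline M^{\mathrm{gp}}$, its cohomology with trivial coefficients is computed by a Koszul-type complex of free modules, and $l^+/p$ is flat over $\Z/p$ (since $\Z/p$ is a field). By Theorem \ref{kummervsclassical} the right-hand side is
$$\bigwedge^q(\overline M^{\mathrm{gp}}\otimes\Z/p)(-q)\otimes l^+/p.$$
On the other side, the stalk of the source is
$$\bigwedge^q\overline M^{\mathrm{gp}}\otimes (\mathcal O^+_{X_{\et},\xi}/p)(-q).$$
Here $\mathcal O^+_{X_{\et},\xi}/p\to l^+/p$ is an isomorphism: the strict henselization has $p$-adic completion agreeing modulo $p$ with the residue data, as in the analogous step for $\mathcal O^+/p$ in \cite{scholze2013padic}, using that $(\mathcal O^+/p)_\xi$ depends only on the point. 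Finally, the compatibility of the two descriptions with the cup-product map $c$ is formal, since both are defined by the same Kummer boundary classes. Together these steps give the stated isomorphism.
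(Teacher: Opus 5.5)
Your route differs from the paper's. The paper does no stalk computation. It takes Theorem \ref{kummervsclassical} with $n=p$ and combines it with \cite[Lemma 4.5.8]{dllz2023logadic}, used as the projection formula $(R\epsilon_*\F_p)\otimes_{\F_p}\mathcal O^+_{X_{\et}}/p\simeq R\epsilon_*(\mathcal O^+_{X_{\ket}}/p)$ (the same form appears in the proof of Theorem \ref{primitive}). Since $-\otimes_{\F_p}\mathcal O^+/p$ is exact, taking $\mathcal H^q$ gives the corollary. Your identity $H^q(\Gamma,l^+/p)\simeq H^q(\Gamma,\F_p)\otimes_{\F_p}l^+/p$ amounts to proving that projection formula for $\epsilon$ stalk by stalk. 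This makes your argument self-contained. The paper's citation is shorter and gives the statement for complexes, which is what Theorem \ref{primitive} needs.

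There is, however, one incorrect step as you wrote it.
\begin{itemize}
\item The spaces $\xi^{(\frac1m)}$ of Subsection \ref{loggeompt} are reduced subspaces used to build the log geometric point. They are not Kummer \'etale over $\xi$.
\item The actual Kummer covers are the fs fiber products $\xi\times_{\mathbb E_P}\mathbb E_{\frac1mP}$, and these are typically non-reduced. For $P=\N$ with $1\mapsto0$, one gets $\Spa(l[T]/(T^m))$.
\item Consequently $\mathcal O^+$ does not evaluate to $l^+$ on the covers. Its value is a product of rings $l^+\oplus\mathfrak n$, where $\mathfrak n$ is the nilradical, and $\Gamma$ acts on $\mathfrak n$ through nontrivial characters. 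Your claims that the underlying spaces are $\Spa(l,l^+)$ and that the action is trivial are therefore false for $\mathcal O^+$.
\end{itemize}

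The conclusion you actually need still holds. $\mathfrak n$ consists of topologically nilpotent elements, so it lies in the ring of integral elements. It is also an $l$-vector space, hence $p$-divisible, so $\mathfrak n\subset p\,\mathcal O^+(U)$. Therefore $\mathcal O^+(U)/p\simeq\prod l^+/p$, with trivial $\Gamma$-action.

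The same observation makes two of your vaguer steps precise:
\begin{itemize}
\item The isomorphism $\mathcal O^+_{X_{\et},\bar x}/p\simeq l^+/p$ holds because the maximal ideal of $\mathcal O_{X_{\et},\bar x}$ lies in $\mathcal O^+$ and is $p$-divisible.
\item When you pass from $X$ to the log point via the limit topos of Proposition \ref{geompoint}, the restriction of $\mathcal O^+_{X_{\ket}}/p$ is not a priori the intrinsic $\mathcal O^+/p$ of $\xi$. Its $\Gamma$-module is $\varinjlim_{U,m}(\mathcal O^+/p)(U\times_{\mathbb E_P}\mathbb E_{\frac1mP})$, which equals $l^+/p$ with trivial action by the same argument.
\end{itemize}

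With these corrections, your argument goes through.
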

	
	\begin{proof}
		This follows from Theorem \ref{kummervsclassical} and \cite[Lemma 4.5.8]{dllz2023logadic}, applied with $Y=X_{\ket}$ and $Z=X_{\et}$.
	\end{proof}
	
    \begin{lem} \label{constructible}
    	If $\f \in D_{zc}^{(b)}(X_{\et},\Lambda)$, then $R\epsilon_*\epsilon^*\f \in D_{zc}^{(b)}(X_{\et},\Lambda).$
    \end{lem}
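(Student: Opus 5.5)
The plan is to compute $R\epsilon_*\epsilon^*\f$ stalkwise through a projection formula, using Theorem \ref{kummervsclassical} for the constant coefficient case. Zariski-constructibility is local for the analytic (even étale) topology. So we may work étale locally on $X$ and assume $X$ has a chart modeled on a sharp fs monoid $P$. Since $D_{zc}^{(b)}$ is generated by shifts of sheaves of the form $j_!\,\mathcal L$, where $j$ is the inclusion of a locally closed Zariski constructible subspace and $\mathcal L$ is a locally constant constructible sheaf of $\Lambda$-modules, and $R\epsilon_*\epsilon^*$ is triangulated, we reduce by dévissage (induction on the number of nonzero cohomology sheaves, then on a stratification) to the case $\f=\mathcal L$ a single locally constant constructible sheaf on $X$.

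For such $\mathcal L$, which is étale locally constant, we pass to an étale cover on which $\mathcal L\simeq\underline{N}$ for a finite $\Lambda$-module $N$. Resolving $N$ by finite free $\Lambda$-modules, or simply using $\underline N\simeq \Lambda\otimes_\Lambda N$ together with the fact that $\epsilon^*$ and $R\epsilon_*$ commute with étale localization, we get the projection formula
\[
R\epsilon_*\epsilon^*\mathcal L\simeq R\epsilon_*\Lambda\otimes^{L}_{\Lambda}\mathcal L .
\]
One can justify this by computing both sides on stalks at geometric points via Proposition \ref{geompoint} (applied to $\epsilon$, or more precisely by a direct limit over étale neighborhoods, as in its proof). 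There the statement reduces to cohomology of a log point with coefficients $N$, which commutes with the finite colimit $N$. Theorem \ref{kummervsclassical} then identifies $R^q\epsilon_*\Lambda\simeq\bigwedge^q\overline{\mathcal M}^{\mathrm{gp}}\otimes\Lambda(-q)$. Moreover, $R^q\epsilon_*\Lambda=0$ for $q>\operatorname{rk}P^{\mathrm{gp}}$, which gives boundedness.

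It remains to show that $\bigwedge^q\overline{\mathcal M}^{\mathrm{gp}}\otimes\Lambda$ is Zariski constructible, since tensoring with $\mathcal L$ then preserves this. With the chart $P$, the sheaf $\overline{\mathcal M}$ is constant on the strata $X_F:=\{x:\ \alpha(p)\in\mathcal O_{X,x}^\times \iff p\in F\}$ for faces $F$ of $P$. Each $X_F$ is Zariski locally closed: it is cut out by the vanishing of $\alpha(p)$ for $p\notin F$ and the nonvanishing of $\alpha(p)$ for $p\in F$, with finitely many generators. On $X_F$ we have $\overline{\mathcal M}^{\mathrm{gp}}\simeq (P/F)^{\mathrm{gp}}$ constant, up to the étale twist that has already been trivialized by the chart. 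Hence $\bigwedge^q\overline{\mathcal M}^{\mathrm{gp}}\otimes\Lambda(-q)$ restricts to a constant finite sheaf on each stratum, so it is Zariski constructible.

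The main obstacle is the projection formula step. One must check that the Kummer étale cohomology of a log geometric point is compatible with the coefficients $\epsilon^*\mathcal L$ when $\mathcal L$ is only étale locally constant. If the stalk argument is delicate, I would first try to replace it by directly invoking the cited \cite[Lemma 4.4.29]{dllz2023logadic} argument with torsion locally constant coefficients, using the Kummer-type exact sequence tensored with $\mathcal L$.
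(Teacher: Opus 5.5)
\textbf{The gap is in your d\'evissage step.} D\'evissage of a Zariski-constructible complex does not produce locally constant sheaves on $X$. It produces sheaves $j_!\mathcal L$, where $j=i\circ u\colon Z\hookrightarrow\overline Z\hookrightarrow X$ is a Zariski locally closed immersion ($u$ open, $i$ closed) and $\mathcal L$ is locally constant on $Z$. To pass to $Z$ with its pulled-back log structure you need
\[
R\epsilon_{X,*}\epsilon_X^*(j_!\mathcal L)\simeq j_!\,R\epsilon_{Z,*}\epsilon_Z^*\mathcal L ,
\]
and this is not formal. The functor $\epsilon^*$ commutes with $u_!$, and (by a stalk computation) with $i_*$, but you must also know that $R\epsilon_*$ commutes with them.
\begin{itemize}
\item For $i$, this is exactly Lemma \ref{finitestrict}. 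It gives $R^qi_{\ket,*}=0$ for $q\geq1$, hence $R\epsilon_{X,*}i_{\ket,*}\simeq i_{\et,*}R\epsilon_{\overline Z,*}$.
\item For $u$, you must show that the stalks of $R\epsilon_*u_{\ket,!}(-)$ vanish at geometric points of $\overline Z\setminus Z$. This requires describing stalks of $R\epsilon_*$ as Kummer \'etale cohomology of the log point (the limit argument of \cite[Lemma 4.4.27]{dllz2023logadic}, as in Proposition \ref{geompoint}).
\end{itemize}
This compatibility is the real content of the lemma, and the paper's proof is organized around it. It invokes \cite[Proposition 3.6]{bhatthansen2022constructible} to reduce to $\f=f_{\et,*}\Lambda$ with $f$ finite. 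Then no open immersions occur, and only Lemma \ref{finitestrict} together with $f_{\ket,*}\Lambda=\epsilon^*f_{\et,*}\Lambda$ is needed. What remains is constant coefficients on $Y$, handled by Theorem \ref{kummervsclassical}.

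Once this is supplied, your route works, and it genuinely differs from the paper's. You handle lisse coefficients directly via a projection formula, using only the definition of Zariski-constructibility. The paper instead uses the Bhatt--Hansen generation result so that it only needs constant coefficients and finite pushforwards. The step you flagged as the main obstacle is actually harmless. Since $\mathcal L$ comes from $X_{\et}$, the Kummer \'etale fundamental group of the log point acts trivially on its stalk. So the same stalk computation as for $\Lambda$ gives $R^q\epsilon_*\epsilon^*\mathcal L\simeq\bigwedge^q\overline{\mathcal M}^{\mathrm{gp}}\otimes\mathcal L(-q)$, with no $\mathrm{Tor}$ terms because $\overline{\mathcal M}^{\mathrm{gp}}_{\overline x}$ is free. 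This is cleaner than resolving $N$ by finite free $\Lambda$-modules, which over $\Lambda=\Z/n\Z$ may require an infinite resolution. Your face-stratification argument for the Zariski-constructibility of $\bigwedge^q\overline{\mathcal M}^{\mathrm{gp}}\otimes\Lambda(-q)$ is fine, and it is also what the paper tacitly uses in its final step.
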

	
	\begin{proof}
		By \cite[Proposition 3.6]{bhatthansen2022constructible}, we may assume that there exists a finite morphism $f:Y \to X$, with the log structure on $Y$ pulled back from $X$, such that $\f=f_{\et,*}\Lambda.$ As $f$ is finite, the stalk computation shows that $f_{\ket,*}\Lambda=\epsilon^*\f.$ Therefore, by Lemma \ref{finitestrict} below, we may replace $X$ by $Y$, and it suffices to show that $R\epsilon_*\epsilon^*\Lambda \in D_{zc}^{(b)}(X_{\et},\Lambda),$ which follows from Theorem \ref{kummervsclassical}.
	\end{proof}

	\begin{lem} \label{finitestrict}
		Let $f:Y\to X$ be a strict finite morphism between analytic fs log adic spaces. Let $\f$ be an abelian sheaf on $Y_{\ket}$. Then for $i\geq 1$ we have $R^if_{\ket,*}\f=0.$
	\end{lem}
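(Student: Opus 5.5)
Since $R^if_{\ket,*}\f$ is the Kummer \'etale sheafification of $V\mapsto H^i((Y\times_XV)_{\ket},\f)$, we will show that its stalks at all log geometric points of $X$ vanish. This suffices by the conservativity of fiber functors attached to log geometric points (the lemma in \S\ref{loggeompt}).

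\textbf{Step 1: reduce to an ordinary geometric point.} Let $\zeta\to X$ be a log geometric point lying over an ordinary geometric point $\xi:\Spa(l,l^+)\to X$. Every Kummer \'etale neighborhood of $\zeta$ is dominated by one of the form $U\times_{X\langle P\rangle}X\langle\frac1mP\rangle$, where $U$ is an \'etale neighborhood of $\xi$ with a chart $P$. We will prove the more precise statement
\[
\bigl(R^if_{\ket,*}\f\bigr)_\zeta=\varinjlim_{U,m}H^i\bigl((Y\times_X U_m)_{\ket},\f\bigr),\qquad U_m:=U\times_{X\langle P\rangle}X\langle\tfrac1mP\rangle .
\]
This is the argument of Proposition \ref{geompoint}: the limit topos description of $\xi_{\ket}$ from \cite[Lemma 4.4.27]{dllz2023logadic}, combined with passing to the Kummer covers. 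It identifies the stalk with
\[
\varinjlim_m H^i\bigl((Y\times_X\xi^{(\frac1m)})_{\ket},\f\bigr),
\]
and hence with the cohomology over $Y\times_X\widetilde\xi$.

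\textbf{Step 2: use strictness and finiteness.} Because $f$ is strict, the fiber product $Y\times_X\xi^{(\frac1m)}$ is the ordinary fiber product with the pulled back log structure. Because $f$ is finite, this fiber product is finite over $\Spa(l,l^+)$. Its underlying space is therefore a finite disjoint union of spaces of the form $\Spa(l',l'^+)$ with $l'$ finite over $l$, hence separably closed. These components may be nonreduced, but nilpotents do not affect the \'etale or Kummer \'etale sites, so we may pass to the reduced subspace. Each component then maps isomorphically onto $\xi^{(\frac1m)}$ on underlying adic spaces, with pulled back log structure. In the limit over $m$, each component becomes a log geometric point $\widetilde\xi'$ lying over $\widetilde\xi$. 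The stalk is therefore a finite direct sum of the groups $\varinjlim_m H^i$ on these components.

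\textbf{Step 3: vanishing.} It remains to see that $\varinjlim_m H^i((\xi'^{(\frac1m)})_{\ket},\f)=0$ for $i\ge1$. A cohomology class on $\xi'^{(\frac1m)}$ is killed by some Kummer \'etale cover. The \'etale part of such a cover splits, because $l'$ is separably closed. The Kummer part is dominated by some $\xi'^{(\frac1{mm'})}$. So every class dies at a higher level, and the colimit vanishes. Equivalently, the log geometric point has trivial Kummer \'etale cohomology in positive degrees, since its fiber functor is exact.

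\textbf{Main obstacle.} I expect the main difficulty to be Step 1, namely commuting the stalk with the cohomology of the fiber in the Kummer setting. Proposition \ref{geompoint} only treats the ordinary point $\xi$ with its pulled back log structure, not $\widetilde\xi$. To handle this, I would first apply Proposition \ref{geompoint} at each finite level $U_m\to X$, which is legitimate because $f$ is qcqs. I would then take the colimit over $m$, using the fact that the Kummer neighborhoods $U_m$ are cofinal among Kummer \'etale neighborhoods of $\zeta$.
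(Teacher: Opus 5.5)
Your proposal is correct and follows essentially the paper's route: reduce via Proposition \ref{geompoint} to the case $X=\xi$, then use strictness and finiteness to write $Y\times_X\xi$ as a finite disjoint union of copies of $\xi$ (up to nilpotents). The obstacle you anticipate does not arise, because $\zeta\to X$ factors through $\xi$, so stalks at $\zeta$ can be computed after pulling back to $\xi_{\ket}$. The paper then concludes that $g_*$ is exact, since $(g_*\f)_\zeta\simeq\prod_i\f_{\zeta_i}$; this is the same fact as your Step 3, namely the vanishing of higher cohomology at each log geometric point $\zeta_i$ via cofinality of the Kummer tower.
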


	\begin{proof}
		It suffices to check the stalks at all log geometric points $\zeta=(\Spa(l,l^+),\mathcal{M},\alpha) \to X.$ Let $\xi:\Spa(l,l^+) \to X$ be the associated geometric point. Denote by $g$ the pullback morphism $g:Y\times_X \xi \to \xi$ in the category of fs log adic spaces. According to Proposition \ref{geompoint}, it suffices to show that $R^ig_{\ket,*}(\f|_{Y\times_X \xi})=0,$ or equivalently, that $g_{\ket,*}$ is exact. Therefore we may assume that $X=\xi.$ Since $f$ is strict and finite, $Y=\coprod_i \xi_i$ is a finite disjoint union of copies $\xi_i$ of $\xi.$ Let $\zeta_i$ be the pullback of $\zeta$ along $\xi_i \to \xi$. Taking colimits, we obtain $(f_*\f)_{\zeta}\simeq \prod_i \f_{\zeta_i}.$ This completes the proof.
	\end{proof}
	
	\subsection{Proper base change for local systems}
	
	\begin{theorem} \label{properbaselocal}
		Let
		\[
		\begin{tikzcd}
			Y' \arrow[r, "g'"] \arrow[d, "f'"'] & Y \arrow[d, "f"] \\
			X' \arrow[r, "g"]                  & X
		\end{tikzcd}
		\]
		be an fs cartesian square of fs log rigid varieties, where the underlying morphism $\underline{f}$ is proper and $g$ is strict. Then for an abelian torsion sheaf $\f$ on $Y_{\ket}$, the base change map $$g^{*}Rf_*\f \to Rf'_*g'^{*}\f$$ is an isomorphism in any of the following cases:
		
		(1) $p$ is invertible in $\f$;

		(2) $\operatorname{dim.tr}g=0 $ (see \cite[Definition 1.8.4]{huber2013etale});

		(3) $\f$ comes from a Zariski-constructible sheaf on $Y_{\et}$.
	\end{theorem}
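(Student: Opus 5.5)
Since the base change map is a map of complexes of sheaves on $X'_{\ket}$, we check that it is an isomorphism on stalks at log geometric points $\zeta'\to X'$, using that these form a conservative system. Let $\xi'$ be the underlying geometric point of $X'$ and $\xi=g\circ\xi'$ its image in $X$. Because $g$ is strict, the log structures pulled back to $\xi'$ from $X'$ and from $X$ agree. Hence $\xi'$ and $\xi$ are the same fs log point, and any log geometric point over $\xi$ is also one over $\xi'$. By Proposition \ref{geompoint}, applied to $f$ (proper, hence quasi-compact and quasi-separated) and to $f'$, the stalks of both sides reduce to Kummer \'etale cohomology of the fibre products $Y\times_X\xi$ and $Y'\times_{X'}\xi'$. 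These fibres are canonically isomorphic, since the square is fs cartesian and $\xi'\to X'\to X$ equals $\xi\to X$. The comparison therefore reduces to a statement over a single fs log point. This is the step where the strictness of $g$ is used.

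It is easier, however, to reduce the statement to the classical proper base change theorem of Huber \cite{huber2013etale} for the underlying adic spaces. Factor $Rf_{\ket,*}=R f_{\et,*}\circ R\epsilon_{Y,*}$ and similarly for $f'$. We use:
\begin{enumerate}
\item[(a)] the base change $g^*R\epsilon_{X,*}\simeq R\epsilon_{X',*}g^*$ for the strict morphism $g$;
\item[(b)] the base change $g'^*R\epsilon_{Y,*}\f\simeq R\epsilon_{Y',*}g'^*\f$ along the strict morphism $g'$.
\end{enumerate}
Then
$$g^*Rf_{\ket,*}\f\simeq g^*R\underline f_{\et,*}(R\epsilon_{Y,*}\f)\to R\underline f'_{\et,*}g'^*R\epsilon_{Y,*}\f\simeq R\underline f'_{\et,*}R\epsilon_{Y',*}g'^*\f.$$
The middle arrow is Huber's proper base change for the proper morphism $\underline f$ \cite[Theorem 4.4.1]{huber2013etale}, applicable to a torsion complex $R\epsilon_{Y,*}\f$ in each of the three cases. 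In case (1), where $p$ is invertible, the theorem holds for arbitrary $g$. In case (2), where $\operatorname{dim.tr}g=0$, it holds for all torsion sheaves. Case (3) needs an extra check, since Huber's theorem with $p$-torsion coefficients is not available for general $g$. We plan to use Lemma \ref{constructible}, which shows that $R\epsilon_*\epsilon^*$ preserves $D^{(b)}_{zc}$. This reduces us to proper base change for Zariski-constructible complexes on the \'etale site, as in \cite{bhatthansen2022constructible}, where proper pushforward preserves Zariski-constructibility and base change holds for all coefficients. If needed we handle unbounded complexes by truncation, using finite cohomological dimension of $\epsilon$ on each quasi-compact piece, which follows from Theorem \ref{kummervsclassical} via the rank of $\overline{\mathcal M}^{\mathrm{gp}}$.

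For (a) and (b), we compute stalks at a geometric point $\xi'$ using Proposition \ref{geompoint} with the identity map, or directly from the equivalence of topoi $\xi_{\ket}^{\sim}\simeq\varprojlim U_{\ket}^{\sim}$ from its proof. Both sides become $R\Gamma(\xi_{\ket},\f|_\xi)$, and strictness ensures that $\xi$ carries the same log structure whether viewed over $X$ or over $X'$. Finally we check that the composite of these isomorphisms is the base change map; this is a formal compatibility of adjunctions.

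The main obstacle is case (3). There one must verify that $R\epsilon_{Y,*}\f$ really lies in the constructible category to which the \'etale proper base change of \cite{bhatthansen2022constructible} applies, and that the identifications above are compatible with it. We will first try writing $\f=\epsilon^*\mathcal G$ and applying Lemma \ref{constructible} together with its finite-morphism reduction and Lemma \ref{finitestrict}.
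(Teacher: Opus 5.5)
There is a genuine gap in the argument you actually rely on, namely your second paragraph. The factorization $Rf_{\ket,*}=R\underline f_{\et,*}\circ R\epsilon_{Y,*}$ is only valid when $X$ has trivial log structure. In general the left side lands in $D(X_{\ket})$ and the right side in $D(\underline X_{\et})$; the correct identity is $R\epsilon_{X,*}\circ Rf_{\ket,*}=R\underline f_{\et,*}\circ R\epsilon_{Y,*}$. Your chain of isomorphisms, with (a) used to move $g^*$ past $R\epsilon_{X,*}$, therefore proves only that $R\epsilon_{X',*}$ of the base change map is an isomorphism. That is not enough, because $R\epsilon_{X',*}$ is not conservative: Kummer \'etale stalks at log geometric points are colimits over Kummer covers extracting roots of the chart, and the \'etale site does not see these. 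For example, take a log point with chart $\N$ and a locally constant $\F_\ell$-sheaf ($\ell\geq 3$) on which the procyclic Kummer \'etale fundamental group acts through a nontrivial character; this sheaf is nonzero but has $R\epsilon_*=0$.

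Your first paragraph has a related error. Proposition \ref{geompoint} applied to $f$ computes the pullback of $Rf_*\f$ to the geometric point of $X$ over $g(x')$. The field of that point is the completed separable closure $l$ of the residue field at $g(x')$, not the possibly much larger field $l'$ of $\xi'$, since \'etale neighbourhoods in $X$ only see $l$. So $Y\times_X\xi$ and $Y'\times_{X'}\xi'$ differ by base change along $l\to l'$. If they agreed, your argument would prove the theorem for every abelian sheaf with no hypothesis. That is too strong: this change of field is exactly where (1)--(3) are needed.

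The missing idea is the Kummer tower used in the paper. First reduce to $X=\Spa(l,l^+)$ and $X'=\Spa(l',l'^+)$, with log structures given by the same chart $P$; this uses the strictness of $g$. Then write $(R^if_{\ket,*}\f)_\zeta=\varinjlim_m R^i(\epsilon\circ f_m)_*\f$, where $f_m:=f\times_{\mathbb{E}_1}\mathbb{E}_m$ and $\mathbb{E}_m$ is modeled on $\frac1mP$. Because $g$ is strict, the tower over $X'$ is pulled back from the one over $X$, so it suffices to prove base change for each $\epsilon\circ f_m$. The target of this map is an \'etale site with trivial log structure, and $\epsilon\circ f_m=\underline{f_m}\circ\epsilon$. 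Only at this point is your factorization through $R\epsilon$ legitimate. From there your remaining ingredients finish the proof: Huber's theorem (or Bhatt--Hansen together with Lemma \ref{constructible}) for $\underline{f_m}$, and base change for $\epsilon$ along the strict map $g'$. The latter is your (b), and corresponds to the paper's appeal to \cite[Lemma 4.4.27]{dllz2023logadic}.
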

	
	\begin{rem}
		For example, case (2) applies when $g$ is a strict closed immersion.
	\end{rem}
	
	\begin{rem}
		One can also define Zariski-constructibility on the Kummer \'etale site $Y_{\ket}$. However, as explained in \cite[4.8]{illusie2002log}, $Rf_*\f$ need not be Zariski-constructible on $X_{\ket}$ for a Zariski-constructible sheaf $\f$ on $Y_{\ket}$.
	\end{rem}
	
	\begin{proof}
	    It suffices to check the stalks at all log geometric points $\zeta=(\Spa(l,l^+),\mathcal{M},\alpha) \to X'.$ Let $\xi:\Spa(l,l^+) \to X'$ be the associated geometric point. As $X$ and $X'$ admit a chart modeled on a sharp fs monoid $P$ up to \'etale localization, by Proposition \ref{geompoint} and the discussion in Section \ref{loggeompt}, we may assume that $X=\Spa(l,l^+)$ with log structure associated with the pre-log structure $P \to l^+$, $X'=\Spa(l',l'^+)$ with log structure associated with the pre-log structure $P \to l^+\to l'^+$, and $\mathcal{M}$ is the log structure of $\zeta$ associated with the pre-log structure $P_{\mathbb{Q}_{\geq 0}} \to l^+ $. For $m\geq 1$, let $$\mathbb{E}_m :=\operatorname{Spa}\left(l\left\langle\frac{1}{m} P\right\rangle, l^+\left\langle\frac{1}{m} P\right\rangle\right), $$ equipped with the natural log structure modeled on $\frac{1}{m}P$. Take a chart $X' \to X \to \mathbb E_1$. Then for any abelian sheaf $\f$ on $Y_{\ket}$, we have $$(R^if_{\ket,*}\f)_{\zeta}=\varinjlim_m H^i_{\ket}((Y\times_{\mathbb E_1} \mathbb E_m),\f)=\varinjlim_m R^i(\epsilon\circ (f\times_{\mathbb E_1} \mathbb E_m)_{\ket})_*\f,$$ where we identify $\f$ with its restriction.
	    
	    Therefore, to prove the theorem, after replacing $f$ by $\epsilon\circ (f\times_{\mathbb E_1} \mathbb E_m),$ we may assume that $X$ and $X'$ have trivial log structures. In this case, we can factor $f=\underline{f}\circ \epsilon$, where $\underline{f}:\underline{Y} \to \underline{X}$ is the underlying map. The classical proper base change theorem holds in all three cases: when $p$ is invertible in $\f$ or $\operatorname{dim.tr}g=0 $, this is \cite[Theorem 4.1.1]{huber2013etale}; when $\f$ comes from a Zariski-constructible sheaf on $Y_{\et}$, this follows from \cite[Theorem 3.15]{bhatthansen2022constructible} and Lemma \ref{constructible}. Therefore, we are reduced to the case where $f=\epsilon.$ In this case, base change follows from the stalk computation in \cite[Lemma 4.4.27]{dllz2023logadic}, since $g$ is strict.
	\end{proof}
	
	\subsection{Proper base change for \texorpdfstring{$\mathcal{O}_X^+/p$}{OX+/p}}
	
	\begin{theorem} \label{properbasestr}
		Let
		$$
		\begin{tikzcd}
			Y' \arrow[r, "g'"] \arrow[d, "f'"'] & Y \arrow[d, "f"] \\
			X' \arrow[r, "g"]                  & X
		\end{tikzcd}
		$$ be an fs cartesian square of fs log rigid varieties, where the underlying morphism $\underline{f}$ is proper and $g$ is strict. Then the base change map of $\mathcal{O}_{X'}^{+}/p$-modules $$g^{*}Rf_*(\mathcal{O}_Y^{+}/p) \to Rf'_*g'^{*}(\mathcal{O}_Y^{+}/p)$$ is an almost isomorphism.
		
	\end{theorem}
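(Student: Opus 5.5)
We follow the same reduction as in the proof of Theorem \ref{properbaselocal}, and then replace the classical proper base change for torsion local systems by the almost proper base change for $\mathcal O^+/p$ on rigid varieties. The first step is to check the almost isomorphism on stalks at all log geometric points $\zeta\to X'$. Let $\xi$ be the underlying geometric point. By Proposition \ref{geompoint} and the discussion in Section \ref{loggeompt}, we may assume that $X=\Spa(l,l^+)$ and $X'=\Spa(l',l'^+)$, each with the log structure induced by a chart $P$, and that $\zeta$ carries the log structure modeled on $P_{\Q_{\geq0}}$. Writing $\mathbb E_m$ as in that proof, we have
$$(R^if_{\ket,*}(\mathcal O_Y^+/p))_\zeta=\varinjlim_m H^i_{\ket}(Y\times_{\mathbb E_1}\mathbb E_m,\mathcal O^+/p),$$
and similarly for $f'$. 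Since $g$ is strict, $Y'\times_{\mathbb E_1}\mathbb E_m$ is the base change of $Y\times_{\mathbb E_1}\mathbb E_m$ along $g$. It therefore suffices, for each $m$, to prove the base change statement for $\epsilon\circ(f\times_{\mathbb E_1}\mathbb E_m)$, where the targets now have trivial log structure. Both the restriction of $\mathcal O^+/p$ and its formation are compatible with these finite Kummer covers, which are strict over the target after the reduction.

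Once the targets carry the trivial log structure, we factor $f=\underline f\circ\epsilon_Y$ and treat the two factors separately. For $\epsilon_Y$, Corollary \ref{kummervsclassicalcoh} gives
$$R^q\epsilon_*(\mathcal O^+_{Y_{\ket}}/p)\simeq\bigwedge^q\overline{\mathcal M}^{\mathrm{gp}}\otimes(\mathcal O^+_{Y_{\et}}/p)(-q).$$
The sheaf $\overline{\mathcal M}^{\mathrm{gp}}$ pulls back along the strict morphism $g'$, and $g'^*(\mathcal O^+_Y/p)\to\mathcal O^+_{Y'}/p$ is the relevant comparison. Hence $g'^*R\epsilon_*(\mathcal O^+/p)\to R\epsilon_*g'^*(\mathcal O^+/p)$ is an isomorphism, as in the final step of Theorem \ref{properbaselocal}. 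Here we interpret $g'^*$ as the module pullback $g'^{-1}(-)\otimes_{g'^{-1}\mathcal O^+/p}\mathcal O^+/p$. For $\underline f$, the complex $R\epsilon_*(\mathcal O^+/p)$ is a finite extension of sheaves of the form $\mathcal O^+/p\otimes_{\Z}L$, where $L$ is a Zariski-constructible sheaf of $\Z/p$-modules locally of the form $\bigwedge^q\overline{\mathcal M}^{\mathrm{gp}}/p$. So we need the almost proper base change for the underlying proper morphism of rigid varieties applied to $\mathcal O^+/p\otimes L$. We would prove this in two steps. First, use the primitive comparison theorem on geometric fibers to show that $R\underline f_*(\mathcal O^+/p\otimes L)$ is almost $(R\underline f_*L)\otimes\mathcal O^+/p$ on stalks. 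Second, combine this with Theorem \ref{properbaselocal}(3), which applies because $L$ is Zariski-constructible and $g$ is strict, together with Lemma \ref{constructible}.

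We expect the main obstacle to be the stalkwise application of the primitive comparison. The fibers $Y_\xi$ are proper over $\Spa(l,l^+)$ with $l$ algebraically closed but possibly $l^+\neq l^\circ$. The primitive comparison holds in this generality only up to almost isomorphism, using Huber's/Scholze's results for proper spaces over $(C,C^+)$. We must also check that $\varinjlim_m$ and the change from $l$ to $l'$ preserve almost isomorphisms; both are filtered colimits and base changes of almost-zero modules, so this should be routine. We would first try to reduce to $l^+=\mathcal O_l$ by noting that the cohomology of $\mathcal O^+/p$ on $Y_\xi$ only depends almost on the rank-one part.
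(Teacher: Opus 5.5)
Your proposal matches the paper's proof. Both reduce, as in Theorem \ref{properbaselocal}, to geometric points with trivial log structure, factor $f=\underline f\circ\epsilon$, and treat $\epsilon$ via Corollary \ref{kummervsclassicalcoh}. The paper dispatches the $\underline f$ step with ``the same argument goes through''; you make it explicit through primitive comparison with Zariski-constructible coefficients plus Bhatt--Hansen proper base change, mirroring the proof of Theorem \ref{primitive}. The $l^+\neq l^\circ$ subtlety you flag is also left implicit in the paper.
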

	
	\begin{proof}
		As in the proof of Theorem \ref{properbaselocal}, we may reduce to the case where $X=\Spa(l,l^+)$ and $X'=\Spa(l',l'^+)$ have trivial log structures, and then the same argument goes through.
	\end{proof}
	

	\section{Acyclicity of log blow-ups}
	
	In this section, we prove the acyclicity of certain sheaves under log blow-ups. Let $n$ be a positive integer, and put $\Lambda:=\Z/n\Z.$
	
	\subsection{Local systems} The main result of this subsection is the following theorem.
	
	\begin{theorem} \label{acycliclocal}
		Let $X$ be an fs log rigid variety over $K$, and let $f:Y\to X$ be a log blow-up along a coherent ideal $\mathcal{J}\subset\mathcal{M}_X$ whose geometric stalks are nonempty. Then for an abelian torsion sheaf $\f$ on $X_{\ket}$ which comes from a Zariski-constructible sheaf on $X_{\et}$, the adjunction map $$\f \to Rf_*f^*\f$$ is a quasi-isomorphism.
	\end{theorem}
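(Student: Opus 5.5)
The strategy is to reduce to a stalk computation at log geometric points, use proper base change to collapse $X$ to a log point, and then compute the cohomology of the fiber of the log blow-up over that point. This follows the pattern of \cite[Theorem 6.2]{illusie2002log}.

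\textbf{Reduction to a log point.} It suffices to check that $\f\to Rf_*f^*\f$ is an isomorphism on stalks at every log geometric point $\zeta\to X$ lying over a geometric point $\xi=\Spa(l,l^+)\to X$. By Proposition \ref{logblowupproperties}, $f$ is proper and log \'etale, and in particular $\underline f$ is proper. By Proposition \ref{pullback}, the fs base change of $f$ along the strict morphism $\xi\to X$ (with the pulled-back log structure) is again the log blow-up of $\xi$ along $\mathcal J\mathcal M_\xi$. The hypothesis that $\f$ comes from a Zariski-constructible sheaf on $X_{\et}$ means case (3) of Theorem \ref{properbaselocal} applies, as do Proposition \ref{geompoint} and the proof of Theorem \ref{properbaselocal}. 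These let us assume that $X=\xi$ is a log point with chart $P\to l$, where $P$ is sharp fs, and that $\f$ is the pullback of a constant sheaf. Since $\f$ is torsion, we reduce further to $\f=\Lambda=\Z/n\Z$. If $p\mid n$, we would instead use the $\mathcal F$-from-$X_{\et}$ case, which is why Zariski-constructibility is assumed.

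\textbf{Computing on the log point.} Now $\underline X=\Spa(l,l^+)$ with $l$ separably closed, and $Y=\xi\times_{\mathbb E_P}\mathbb E_{\Sigma_I}$ for the subdivision $\Sigma_I$ of $\sigma=P^\vee$ (Proposition \ref{fanlogblowup}). The stalk of $Rf_*\Lambda$ at $\zeta$ is
$$\varinjlim_m R\Gamma_{\ket}(Y\times_{\mathbb E_1}\mathbb E_m,\Lambda),$$
as in the proof of Theorem \ref{properbaselocal}. We must show this colimit is $\Lambda$ in degree $0$. Since the $m$-th Kummer cover of $Y$ is again a log blow-up of the $m$-th Kummer cover of the point, it is enough to show that the transition maps kill positive-degree cohomology in the colimit and that $H^0$ is $\Lambda$. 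Compute $R\Gamma_{\ket}(Y_m,\Lambda)=R\Gamma_{\et}(Y_m,R\epsilon_*\Lambda)$ using Theorem \ref{kummervsclassical}: $R^q\epsilon_*\Lambda\simeq\bigwedge^q\overline{\mathcal M}^{\mathrm{gp}}\otimes\Lambda(-q)$. The underlying space of $Y_m$ is the reduced fiber over the closed point, a union of torus orbits $O(\tau)$ for the cones $\tau\in\Sigma_I$ meeting the interior of the face of $\sigma$ picked out by $\xi$. On $O(\tau)$ the sheaf $\overline{\mathcal M}^{\mathrm{gp}}$ is constant with value $P_\tau^{\mathrm{gp}}/(\tau^\perp\cap M)$. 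The multiplication-by-$m$ transition maps act on these lattices and on the torus cohomology of the orbits (which has Kummer classes of characters) by multiplication by $m$ in positive degree. Taking $m$ divisible by $n$ kills all positive-degree contributions except those cancelling combinatorially.

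\textbf{The main obstacle.} The hardest step is the combinatorial bookkeeping: showing that after the colimit the remaining complex is the cochain complex computing the cohomology of a contractible polyhedral complex, namely the part of $\Sigma_I$ subdividing the relevant cone. That cohomology is $\Lambda$ in degree $0$ because a subdivision of a convex cone is contractible. I would first try to avoid the explicit spectral sequence in the following way. Note that after passing to $\varinjlim_m$, each orbit stratum becomes strictly Henselian-like with trivial Kummer cohomology. Then run a Mayer--Vietoris or Čech argument over the closed cover of $Y$ by the (proper) orbit closures indexed by the maximal cones. Each closure and each intersection is acyclic in the colimit, so the Čech complex reduces to the nerve of the cover by maximal cones of the subdivision, which is contractible. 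The colimit over $m$ commutes with cohomology by quasi-compactness.
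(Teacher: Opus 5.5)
Your reduction to a log point via Proposition \ref{geompoint}, Theorem \ref{properbaselocal}(3) and d\'evissage to $\Lambda$ matches the paper. The computation on the log point, however, has a genuine gap.

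\textbf{The cover is wrong.} For a maximal (full-dimensional) cone $\tau\in\Sigma_I$, the orbit closure $V(\tau)=O(\tau)$ is a torus-fixed point, so these closures do not cover the fiber $Y$. Already for the blow-up of $\mathbb{A}^2$ at the origin ($P=\N^2$, $I=(e_1,e_2)$), the fiber is $V(\rho)\simeq\mathbb{P}^1$ for the interior ray $\rho=\R_{\geq0}(1,1)$. The two maximal cones only give the points $0$ and $\infty$.

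\textbf{The corrected cover does not help.} You could instead cover $Y$ by the closures $V(\tau)$ of the minimal cones whose relative interior lies in the interior of $\sigma$. The nerve is then indeed contractible. But ``each closure is acyclic in the colimit'' is no easier than the theorem itself: in the $\mathbb{A}^2$ example the cover has a single member, namely $Y$.

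\textbf{Acyclicity of open orbits does not transfer to closures.} Your justification concerns the open orbits $O(\tau)$, whose ordinary Kummer \'etale cohomology does die in the colimit. To assemble strata you must use compactly supported cohomology, and there the transition maps do not kill everything. For instance, the $m$-th power map acts as the identity on $H^1_c(\mathbb{G}_m,\Lambda)$. So each interior cone $\tau$ leaves a surviving copy of $\Lambda$ in degree $\operatorname{rk}N-\dim\tau$. Showing that the resulting complex has cohomology $\Lambda$ in degree $0$ needs a Lefschetz-duality computation on the subdivided cross-section polytope of $\sigma$. That is exactly what your phrase ``except those cancelling combinatorially'' leaves unproven.

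\textbf{The idea you are missing.} The paper never computes the fiber. It inducts on $r(x)=\operatorname{rk}(\overline{\mathcal M}^{\mathrm{gp}})_{\overline{x}}$. It then uses proper base change a second time to replace the log point by the affine toric variety $\Spec(K[P])^{\an}$, with the point sent to the vertex. By induction, the cone $C$ of $\Lambda\to Rf_*\Lambda$, and its pullback $C_m$ to each Kummer cover $X_m$, is supported at the vertex. Hence $R\Gamma(\{0\}_{m,\ket},C_m)=R\Gamma(X_{m,\ket},C_m)$, and the required vanishing becomes a global statement. Since $X_m$ and $Y_m$ are log smooth with the same trivial locus $U$, Proposition \ref{locus} identifies both $R\Gamma_{\ket}(X_m,\Lambda)$ and $R\Gamma_{\ket}(Y_m,\Lambda)$ with $R\Gamma(U,\Lambda)$. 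This combination of purity and support on the vertex is what replaces your combinatorial bookkeeping.
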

	
	The theorem follows from the following computation.
	
	\begin{prop} \label{locus}
		Let $(X,\mathcal{M})$ be an fs log rigid variety log smooth over $K$. Let $U$ be the maximal open subspace of $X$ on which $\mathcal{M}$ is trivial, and let $j: U\hookrightarrow X$ be the inclusion. Then the natural map $$R\epsilon_*\Lambda \to Rj_*\Lambda$$ is a quasi-isomorphism.
	\end{prop}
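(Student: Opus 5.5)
First I have to be precise about which map is meant. On $U$ the log structure is trivial, so $U_{\ket}=U_{\et}$ and $\epsilon_U=\mathrm{id}$. The morphism of sites gives $\Lambda\to R\epsilon_*\Lambda\to R\epsilon_*Rj_{\ket,*}\Lambda = Rj_{\et,*}R\epsilon_{U,*}\Lambda=Rj_*\Lambda$, since $j$ is strict. So the map in the statement is $R\epsilon_*(\Lambda\to Rj_{\ket,*}\Lambda)$. Both sides are complexes of \'etale sheaves on $X$, and I will compare them on cohomology sheaves, stalk by stalk at geometric points $\overline{x}$ of $X$.

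The first step is a local model. The question is \'etale local, so by \cite[Proposition 3.1.10]{dllz2023logadic} I may assume $X$ is strictly \'etale over $\mathbb{E}_\sigma\times\mathbb{B}^d$, as in the proof of Corollary \ref{localresolution}. The left side is then known: by Theorem \ref{kummervsclassical},
$$R^q\epsilon_*\Lambda\simeq\bigwedge^q\overline{\mathcal M}^{\mathrm{gp}}\otimes\Lambda(-q),$$
and its stalk at $\overline{x}$ is $\bigwedge^q (Q^{\mathrm{gp}})\otimes\Lambda(-q)$, where $Q=\overline{\mathcal M}_{\overline{x}}$ is a sharp fs monoid. In degree one the map $R^1\epsilon_*\Lambda\to R^1j_*\Lambda$ is induced by the Kummer sequences. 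On the Kummer side one uses $\mathcal M^{\mathrm{gp}}$ on $X_{\ket}$, and on the $U$ side $\mathcal O_U^\times$ on $U_{\et}$. By the earlier proposition identifying $\mathcal M$ with $j_*\mathcal O_U^\times\cap\mathcal O_X$, this map is the boundary $\overline{\mathcal M}^{\mathrm{gp}}\otimes\Lambda(-1)=(j_*\mathcal O_U^\times/\mathcal O_X^\times)\otimes\Lambda(-1)\to R^1j_*\Lambda$. Since both sides carry compatible cup products, the map in degree $q$ is the $q$-th exterior power of the degree-one map. The whole statement therefore reduces to \emph{cohomological purity}: I must show that $(R^qj_*\Lambda)_{\overline{x}}\simeq\bigwedge^q Q^{\mathrm{gp}}\otimes\Lambda(-q)$ through this map.

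The second step computes $(R^qj_*\Lambda)_{\overline{x}}$ directly, and I expect it to be the main obstacle. The stalk is $\varinjlim H^q(V\cap U,\Lambda)$ over \'etale neighbourhoods $V$ of $\overline{x}$. Near $\overline{x}$ the space $V$ looks like a small neighbourhood of a point in a product of a polydisc with $\mathbb{E}_{\tau}$, where $\tau$ is the cone with $P_\tau$ modelled on $Q\oplus\mathbb Z^{s}$; the torus factor $\mathbb Z^s$ consists of units. Then $V\cap U$ looks like (disc)$\times$(punctured toric neighbourhood). I would try to reduce to the affinoid-toric computation
$$H^q\bigl(\Spa(C\langle Q^{\mathrm{gp}}\rangle\text{-type small torus}),\Lambda\bigr)\simeq\bigwedge^q\mathrm{Hom}(\text{tame }\pi_1,\Lambda)^\vee,$$
where the group in question is $\widehat{Q^{\mathrm{gp}}}(1)$.

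Two tools look usable for this. One is Huber's purity and computations for tori: Huber \cite{huber2013etale} computes the cohomology of punctured discs and the nearby cycles of smooth pairs with normal crossing divisors. The other is to redo the argument of Illusie/Fujiwara--Kato \cite[Theorem 7.4]{illusie2002log} analytically. That argument uses the Kummer \'etale covers $\mathbb{E}_m\to\mathbb{E}_1$: they become finite \'etale Galois over $U$ with group $\mathrm{Hom}(Q^{\mathrm{gp}},\mu_m)$. This shows that $R\epsilon_*\Lambda$ is the group cohomology of the tame inertia $\widehat{Q^{\mathrm{gp}}}(1)$, and that restricting to $U$ realises the same group as $\pi_1$ of the local punctured neighbourhood. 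The remaining claim is that the limit of the covers trivialises all higher cohomology of $V\cap U$, i.e. an Abhyankar-type lemma plus $K(\pi,1)$ for the punctured toric germ.

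If that is too hard directly, I would first handle the regular case, $Q=\mathbb N^r$ with a strictly normal crossing divisor. Here I can use Huber's purity for smooth divisors together with the K\"unneth formula, exactly as in SGA4 for normal crossings. I would then deduce the general case by a log blow-up from Corollary \ref{localresolution}, using proper base change (Theorem \ref{properbaselocal}) to pass cohomology along the blow-up. This route is circular only if Theorem \ref{acycliclocal} is used, so instead I would compute $R\pi_*$ of constant sheaves on the regular toric resolution directly, via the Mayer--Vietoris spectral sequence of the fan.
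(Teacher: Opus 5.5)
There is a genuine gap. Your first step is correct, but it only reformulates the problem. By Theorem \ref{kummervsclassical} and compatibility with cup products, the proposition is \emph{equivalent} to the purity statement $(R^qj_*\Lambda)_{\overline{x}}\simeq\bigwedge^qQ^{\mathrm{gp}}\otimes\Lambda(-q)$. So all of the content lies in your second step, and you do not carry it out.

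\textbf{Your proposed local model is wrong when $p\mid n$.} This case is allowed here, since every $n$ is invertible on $X$ over $K$ of characteristic $0$. It is also needed later with $\F_p$-coefficients in Theorem \ref{acyclicintegral}. Take $|p|^{p/(p-1)}<|\pi|<1$ and the affinoid torus $\Spa(C\langle T^{\pm1}\rangle,\mathcal{O}_C\langle T^{\pm1}\rangle)$. The unit $1+\pi T$ is not a $p$-th power there. Indeed, write $1+\pi T=(1+h)^p$ with $|h|<1$ for the Gauss norm and compare norms: either $|\pi|\le|p|^{p/(p-1)}$, or $T$ becomes a $p$-th power in the reduction. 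Hence $H^1(-,\mu_p)$ is strictly larger than $\Z/p$, and your displayed formula fails. The sets $V\cap U$ that compute the stalk are non-quasi-compact punctured neighbourhoods (e.g.\ $\{0<|t|\le\epsilon\}$), not affinoid tori. Controlling their cohomology in the colimit is exactly the hard part.

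\textbf{Your other two routes are not yet arguments either.}
\begin{itemize}
\item The regular case relies on a K\"unneth formula for the non-proper pushforward $Rj_*$. You would first have to establish this in rigid geometry.
\item The blow-up route needs $R\pi_{\et,*}R\epsilon_{Y,*}\Lambda\simeq R\epsilon_{X,*}\Lambda$. This is the \'etale pushforward of Theorem \ref{acycliclocal}, whose proof in the paper uses the present proposition. Your replacement, a ``Mayer--Vietoris spectral sequence of the fan'', would require computing the cohomology of the toric fibres of $\pi$ with coefficients in $\bigwedge^q\overline{\mathcal{M}}_Y^{\mathrm{gp}}$ and matching it with $\bigwedge^qQ^{\mathrm{gp}}$. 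That computation is left unspecified.
\end{itemize}

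\textbf{The missing idea is that no analytic computation of punctured neighbourhoods is needed.}
\begin{itemize}
\item Both $R\epsilon_*$ and $Rj_*$ commute with strictly \'etale base change, and the trivial locus pulls back to the trivial locus. So by \cite[Proposition 3.1.10]{dllz2023logadic} you may take $X=\mathbb{E}=\Spa(K\langle P\rangle,K^+\langle P\rangle)$.
\item This $\mathbb{E}$ is an open subspace of the analytification of the log regular toric scheme $\Spec(K[P])$. Its trivial locus is the intersection of $\mathbb{E}$ with $\Spec(K[P^{\mathrm{gp}}])^{\an}$.
\item $Rj_*\Lambda$ is compatible with analytification by Huber's comparison theorem \cite[Theorem 3.8.1]{huber2013etale}; all $n$ are invertible since $K$ has characteristic $0$.
\item On the Kummer side, the algebraic and analytic $R^q\epsilon_*\Lambda$ are both $\bigwedge^q\overline{\mathcal{M}}^{\mathrm{gp}}\otimes\Lambda(-q)$, compatibly with the maps.
\end{itemize}
The proposition then follows from the Fujiwara--Kato log purity theorem for schemes \cite[Corollary 7.5]{illusie2002log}. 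This is the paper's argument, following \cite[Lemma 4.6.2]{dllz2023logadic}.
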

	
	\begin{proof}
		The proof of \cite[Lemma 4.6.2]{dllz2023logadic} goes through: since the problem is \'etale local, by \cite[Proposition 3.1.10]{dllz2023logadic}, we may assume that there exists a strictly \'etale morphism $X \to \mathbb E:=\Spa(K\langle P\rangle,K^+\langle P\rangle)$ for some toric monoid $P$, where the log structure on $\mathbb E$ is associated with the pre-log structure $P \to K\langle P\rangle$. We may further assume that $X=\mathbb E.$ Then $X$ is an open subspace of the analytification of the scheme $\Spec(K[P]).$ The proposition then follows from its analogue for schemes \cite[Corollary 7.5]{illusie2002log} and the comparison theorem between schemes and their analytifications \cite[Theorem 3.8.1]{huber2013etale}.
	\end{proof}
	
	\begin{proof}[Proof of Theorem \ref{acycliclocal}]
		The proof of \cite[Theorem 6.2]{illusie2002log} goes through; we include it for completeness. We show that for any log geometric point $\widetilde{x}$ of $X$, the following holds: $$(*)_{\widetilde{x}} \quad \alpha_{\widetilde{x}} : \mathcal{F}_{\widetilde{x}} \longrightarrow (Rf_*f^*\mathcal{F})_{\widetilde{x}}$$ is an isomorphism. We prove $(*)_{\widetilde{x}}$ by induction on $$r(x) = \mathrm{rk}(\overline{\mathcal{M}}^{\mathrm{gp}})_{\overline{x}}$$ (where $x$ (resp. $\overline{x}$) is the point (resp. geometric point) of $X$ that is the image of $\widetilde{x}$). Let $r \in \mathbb{N}$. Assume $(*)_{\widetilde{x}}$ holds for all $(X, \mathcal{J}, \widetilde{x})$ such that $r(x) < r$. Applying the proper base change theorem (Theorem \ref{properbaselocal}) to the base change by the strict map $\overline{x} \to X$, we may assume that the underlying adic space of $X$ is $\operatorname{Spa} (l,l^+)$, with $l$ a complete separably closed nonarchimedean field, and that $X$ is equipped with a chart $X \to \operatorname{Spa} (l\langle P \rangle,l^+\langle P \rangle)$, with $P$ toric, $P^{\mathrm{gp}} \cong \mathbb{Z}^r$ and $\mathcal{J} = I\mathcal{M}_X$. By d\'evissage on $\mathcal{F}$ we reduce to the case $\mathcal{F} = \Lambda$. Now let $X=\Spec(K[P])^{\an}$ be the analytification of the affine toric variety, with vertex $\{0\}$ corresponding to the map $P \to K$ sending $a$ to 0 for $a \neq 0$ and 0 to 1. Endow $X$ with its canonical log structure and let $f : Y \to X$ be the log blow-up of $X$ along $I$. Applying Theorem \ref{properbaselocal}(3) again to the base change by the strict map $\overline{x} \to X$ sending $x$ to $\{0\}$, we are reduced to showing that the stalk of $\alpha : \Lambda \to Rf_*\Lambda$ at a log geometric point $\{0\}^{\sim}$ above $\{0\}$ is an isomorphism. Let $C \in D^+(X_{\ket}, \Lambda)$ be the cone of $\alpha$, defined by the exact triangle $$\Lambda \xrightarrow{\ \alpha\ } Rf_*\Lambda \longrightarrow C \longrightarrow \Lambda[1].$$ We have to show $C_{\{0\}^{\sim}} = 0 .$ By the induction hypothesis, $C_{\zeta} = 0$ for all log geometric points $\zeta$ over a point $x$ of $X$ distinct from $\{0\}$, since $r(x) < r$ for such a point. For $m\geq1$, put $X_m:=\Spec(K[\frac1mP])^{\an}$ and $Y_m:=Y\times_XX_m$, and let $f_m:Y_m\to X_m$ be the pullback of $f$. Write $\{0\}_m$ for the vertex of $X_m$, and let $C_m$ be the pullback of $C$. By Kummer \'etale localization, $C_m$ is the cone of $\Lambda\to Rf_{m,*}\Lambda$, and it is supported on $\{0\}_m$. The restrictions of these Kummer covers to the vertex compute the stalk at $\{0\}^{\sim}$. Therefore, it suffices to prove that, for every $m$, $$R\Gamma(\{0\}_{m,\ket}, C_m) = 0.$$ Since $$R\Gamma(\{0\}_{m,\ket}, C_m) = R\Gamma(X_{m,\ket}, C_m),$$ it suffices to show that the pullback map $$f_m^* : H^q(X_{m,\ket}, \Lambda) \longrightarrow H^q(Y_{m,\ket}, \Lambda)$$ is an isomorphism for all $q$, which follows from Proposition \ref{locus}, since $X_m$ and $Y_m$ have the same trivial locus.
	\end{proof}
	
	\subsection{Integral structure sheaf}
	
	We prove that the Kummer \'etale cohomology of $\mathcal{O}_X^+$ is invariant under log blow-ups (in the almost sense). However, this fails for \'etale cohomology.

	We first consider coherent cohomology of log smooth varieties.
	
	\begin{theorem} \label{acycliccoh}
		Let $X$ be an analytic fs log rigid variety log smooth over $C$, and let $f:Y\to X$ be a log blow-up along a coherent ideal $\mathcal{J}\subset\mathcal{M}_X$ whose geometric stalks are nonempty. Then the adjunction maps $$\mathcal{O}_X \to Rf_*\mathcal{O}_Y, \quad \Omega_X^{\log} \to Rf_*\Omega_Y^{\log}$$ are quasi-isomorphisms.
	\end{theorem}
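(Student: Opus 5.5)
The plan is to reduce, by étale localization and base change, to the toric model $\mathbb{E}_\sigma\to$ its log blow-up, and then to invoke Kato's algebraic result \cite[Theorem 11.3]{kato1994toric} together with GAGA-type comparisons. The claim is étale local on $X$, since $f$ is compatible with strict étale base change by Proposition \ref{pullback} and higher direct images commute with flat (étale) base change for coherent sheaves. So by \cite[Proposition 3.1.10]{dllz2023logadic} we may assume there is a strictly étale morphism $X\to\mathbb{E}_\sigma\times\mathbb{B}^d$ and that $\mathcal{J}$ comes from a finitely generated ideal $I\subset P=P_\sigma$. Then
$$Y\simeq X\times_{\mathbb{E}_\sigma}\mathbb{E}_{\Sigma_I}$$
by Propositions \ref{fanlogblowup} and \ref{pullback}. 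Since $X\to\mathbb{E}_\sigma\times\mathbb{B}^d$ is étale, hence flat, and $f$ is proper, flat base change for coherent cohomology of proper morphisms of rigid spaces reduces us to $X=\mathbb{E}_\sigma\times\mathbb{B}^d$. The factor $\mathbb{B}^d$ carries the trivial log structure and only contributes a flat (completed) tensor factor, so we may drop it. For $\Omega^{\log}$, strictly étale maps satisfy $\Omega^{\log}_Y=g'^*\Omega^{\log}$ of the model, and on the toric model $\Omega^{\log}_{\mathbb{E}_\sigma}=\mathcal{O}\otimes_\Z P^{\mathrm{gp}}$ is free. Because $f$ is log étale with $P^{\mathrm{gp}}\simeq P_i^{\mathrm{gp}}$, we get $\Omega^{\log}_Y=f^*\Omega^{\log}_X$ free of the same rank. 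Hence, by the projection formula, the statement for $\Omega^{\log}$ (and its exterior powers) follows from the one for $\mathcal{O}$.

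It thus remains to show $\mathcal{O}_{\mathbb{E}_\sigma}\to Rf_*\mathcal{O}_{\mathbb{E}_{\Sigma_I}}$ is a quasi-isomorphism. By Lemma \ref{toricrestriction}, $\mathbb{E}_{\Sigma_I}=\mathbb{E}_\sigma\times_{T_\sigma^{\an}}T_{\Sigma_I}^{\an}$, and $\mathbb{E}_\sigma$ is an affinoid open in $T_\sigma^{\an}$. Since higher direct images along proper maps commute with restriction to opens, it suffices to treat the analytification of the proper algebraic morphism $T_{\Sigma_I}\to T_\sigma$. There Kato \cite[Theorem 11.3]{kato1994toric} (or the classical fact that toric varieties have rational singularities, i.e.\ $R\pi_*\mathcal{O}=\mathcal{O}$ for toric proper birational maps) gives $\mathcal{O}_{T_\sigma}\simeq R\pi_*\mathcal{O}_{T_{\Sigma_I}}$. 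To pass to the analytic setting we use the relative GAGA comparison for proper morphisms (Köpf's theorem): $(R\pi_*\mathcal{F})^{\an}\simeq R\pi^{\an}_*\mathcal{F}^{\an}$ for coherent $\mathcal{F}$.

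The main obstacle is the base change step. We must justify that coherent higher direct images along the proper map $f$ commute with an étale (not open) base change $X\to\mathbb{E}_\sigma$, and with the product by $\mathbb{B}^d$. I would handle this via Kiehl's coherence: the sheaves $R^qf_*\mathcal{O}_Y$ are coherent, so it suffices to check isomorphisms on completed stalks or after faithfully flat base change. For affinoid étale $X$ I would compute the cohomology with a finite Čech cover by the affinoid charts of Remark \ref{localblowup}; the Čech complex for $Y$ is then the completed tensor product of that for the model with $\mathcal{O}(X)$, which is flat over $\mathcal{O}(\mathbb{E}_\sigma)$. Alternatively, one can avoid analytification subtleties by computing directly with these charts. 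The Čech complex is graded by $P^{\mathrm{gp}}$, and for each degree $a$ it computes the cohomology of the convex-support subcomplex $\{v\in|\Sigma_I|:\langle a,v\rangle\ge 0\}$. This complex is contractible or empty exactly as in the proof for toric varieties, which yields the result degree by degree after completion.
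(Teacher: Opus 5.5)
Your proposal is correct and follows the paper's route: reduce étale-locally to a strictly étale toric chart, treat $\mathcal{O}$ via Kato's \cite[Theorem 11.3]{kato1994toric} plus relative GAGA, and deduce the case of $\Omega^{\log}$ from $f^*\Omega^{\log}_X\simeq\Omega^{\log}_Y$ and the projection formula. Your extra discussion of the base-change step goes beyond what the paper writes, but two details there need fixing. First, after base change to $X$ the charts of Remark \ref{localblowup} are not quasi-compact, so a Čech computation should instead use the affinoid pieces $\mathbb{E}_{\sigma_i}$ of Proposition \ref{fanlogblowup}, i.e.\ the loci where $|\chi^{a_j-a_i}|\leq 1$ for all $j$. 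Second, in the degree-$a$ toric computation, the set that is empty or contractible is $\{v\in\sigma:\langle a,v\rangle<0\}$, not $\{\langle a,v\rangle\geq 0\}$, which always contains $0$.
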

	
	\begin{proof}
		For the structure sheaf, as in the proof of Proposition \ref{locus}, we may use strictly \'etale toric charts to reduce to the case $X=\mathbb E:=\Spa(C\langle P\rangle,\mathcal O_C\langle P\rangle).$ The assertion then follows from the corresponding result for schemes \cite[Theorem 11.3]{kato1994toric} and GAGA \cite[Proposition 9.4.2]{fujiwarakato2018rigid}. Since $\Omega_X^{\log}$ is locally free and $f$ is log \'etale, we have $f^*\Omega_X^{\log}\simeq\Omega_Y^{\log}$. The assertion for log differentials then follows from the projection formula.
	\end{proof}

	We then consider the sheaf $\mathcal{O}_X^{+}/p$ on the Kummer \'etale site.

	\begin{theorem} \label{acyclicintegral}
		Let $X$ be an analytic fs log rigid variety over $C$, and let $f:Y\to X$ be a log blow-up along a coherent ideal $\mathcal{J}\subset\mathcal{M}_X$ whose geometric stalks are nonempty. Then the adjunction map $$\mathcal{O}_X^{+}/p \to Rf_*(\mathcal{O}_Y^{+}/p)$$ is an almost quasi-isomorphism over the Kummer \'etale site.
	\end{theorem}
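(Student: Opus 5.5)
We check the claim on stalks at log geometric points of $X$, and reduce to a fiber over a point, following the strategy announced in the introduction: a relative primitive comparison theorem turns the statement about $\mathcal O^+/p$ into the already established statement for $\Lambda=\mathbb{F}_p$ (Theorem \ref{acycliclocal}).

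\textbf{Step 1: reduction to a geometric point.} Let $\zeta\to X$ be a log geometric point over the geometric point $\xi=\Spa(l,l^+)\to X$. Since $f$ is proper, Theorem \ref{properbasestr} applies to the strict map $\xi\to X$. Together with Proposition \ref{pullback}, which identifies $Y\times_X\xi$ with the log blow-up of $\xi$ along the pulled-back ideal, this reduces us to the following case: $X=\Spa(l,l^+)$ is a log point with a chart $P\to l$, $P$ toric, $\mathcal J=I\mathcal M_X$, and $Y=X\times_{\mathbb E_P}\mathbb E_{\Sigma_I}$ by Proposition \ref{fanlogblowup}. As in the proof of Theorem \ref{properbaselocal}, the stalk at $\zeta$ is the colimit over $m$ of the Kummer covers $X_m$ with log structure $\frac1mP$, and the pullbacks $Y_m=Y\times_X X_m$ are again log blow-ups by Proposition \ref{pullback}. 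After composing with $\epsilon$ on $X_m$, whose log structure we may now forget, it therefore suffices to show
\[
l^+/p\longrightarrow \varinjlim_m R\Gamma_{\ket}(Y_m,\mathcal O^+/p)
\]
is an almost isomorphism. Here the map $l^+/p\to(\mathcal O^+/p)_\zeta$ is itself almost bijective, since the reduced Kummer covers of a point do not change $l^+$.

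\textbf{Step 2: relative primitive comparison.} Each $Y_m$ is proper over the point $\Spa(l,l^+)$. I would prove that
\[
R\Gamma_{\ket}(Y_m,\mathbb F_p)\otimes l^+/p\to R\Gamma_{\ket}(Y_m,\mathcal O^+/p)
\]
is an almost isomorphism. This is the relative version of the primitive comparison theorem mentioned in the introduction. To prove it, I would first pass to the \'etale site by means of $\epsilon$. Corollary \ref{kummervsclassicalcoh} and Theorem \ref{kummervsclassical} express $R^q\epsilon_*$ of both $\mathbb F_p$ and $\mathcal O^+/p$ by the same formula $\bigwedge^q\overline{\mathcal M}^{\mathrm{gp}}\otimes(-)(-q)$. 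These are Zariski-constructible sheaves tensored with $\mathbb F_p$, respectively with $\mathcal O^+/p$. By a filtration and d\'evissage, the problem then reduces to the classical primitive comparison theorem of Scholze for proper rigid spaces over $l$, applied to Zariski-constructible coefficients $j_!$ and pushforwards from closed strata. The classical theorem is stated over $C$; since $l$ is a complete algebraically closed extension, it applies to each stratum.

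\textbf{Step 3: conclusion.} With Step 2 in hand, Theorem \ref{acycliclocal}, applied with $\Lambda=\mathbb F_p$ in the same reduced situation, gives $\mathbb F_p\simeq\varinjlim_m R\Gamma_{\ket}(Y_m,\mathbb F_p)$. Tensoring with $l^+/p$ and using Step 2 proves the claim. Alternatively, Corollary \ref{perfectoidblowuptower} could be used to compute the colimit on a perfectoid tilde-limit $Y_\infty$, if one needs to handle the colimit and the almost mathematics more cleanly.

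\textbf{Main obstacle.} I expect Step 2 to be the hardest part. The fiber $Y_m$ over a point with nontrivial log structure is a non-reduced, non-smooth proper space, namely the toric boundary of $\mathbb E_{\Sigma_I}$ over the vertex. Moreover, the comparison is needed for Kummer \'etale rather than \'etale cohomology. I would first try to reduce, via $\epsilon$ and Corollary \ref{kummervsclassicalcoh}, to \'etale cohomology of the reduced strata with coefficients $\bigwedge^q\overline{\mathcal M}^{\mathrm{gp}}$. I would then apply Scholze's theorem stratum by stratum, checking that the almost isomorphisms are compatible with the cup-product maps $c$.
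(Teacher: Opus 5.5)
Your proposal is correct in outline, but it takes a different route from the paper's.

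\textbf{The paper's route.} The paper never passes to fibers. \'Etale locally, it first treats log smooth $X$, combining Theorem \ref{acycliclocal} with the \emph{relative}, sheaf-level primitive comparison of Theorem \ref{primitive} on $X_{\ket}$. That theorem is proved by pushing forward along $\epsilon_X$, using $R\epsilon_{Y,*}\mathbb F_p\in D^b_{zc}$, the Bhatt--Hansen generation of $D^b_{zc}$ by $q_*\mathbb F_p$ for finite $q$, and Heuer's relative primitive comparison for $q$ and $\underline f\circ q$. A general $X$ is then handled by a strict closed immersion $X\hookrightarrow\mathbb E_P\times\mathbb B^d$, transporting the result along it via Theorem \ref{properbaselocal}(2), which holds for arbitrary torsion sheaves.

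\textbf{Your route.} You use Theorem \ref{properbasestr} to reduce to the fiber over a geometric point. You therefore need only an \emph{absolute} primitive comparison for the proper spaces $Y_m$ over $\Spa(l,l^+)$. You never need log smoothness of $X$, the embedding trick, or a relative comparison theorem.

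\textbf{What your route costs.} First, you rely on proper base change for $\mathcal O^+/p$, which the paper only sketches. Second, you must check that the level-$m$ comparison maps are compatible with the transition maps before passing to the colimit. Third, you need primitive comparison for proper but \emph{singular} spaces. The closed strata of the fiber over the vertex are projective toric varieties attached to stars of cones of $\Sigma_I$, which are in general singular. So Scholze's original smooth statement is not enough: you must cite the non-smooth version. Alternatively, as the paper does, replace your stratum-by-stratum d\'evissage by the Bhatt--Hansen reduction to $q_*\mathbb F_p$ and apply Heuer's theorem over the point. Non-reducedness of $Y_m$ and $l^+\neq\mathcal O_l$ cause no trouble, since $\mathcal O^+/p$ does not see nilpotents and $\mathcal O_l/l^+$ is almost zero.

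\textbf{What the paper's route buys.} It yields the relative comparison as a statement of sheaves on $X_{\ket}$, which is useful in its own right. It also avoids fibers over log points and the colimit over Kummer covers entirely.
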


	\begin{rem} \label{notacyclic}
		The claim fails on the \'etale site: for example, let $X$ be the analytic affine plane $\mathbb A^2$, equipped with the log structure associated with the coordinate axes, and let $Y$ be the blow-up of $X$ at the origin.
	\end{rem}

	\begin{proof}
		The problem is \'etale local on $X$, so we may choose a chart modeled on a sharp fs monoid $P$ such that $\mathcal J=I\mathcal M_X$. When $X$ is log smooth over $C$, the theorem follows from Theorem \ref{acycliclocal} and the relative primitive comparison theorem (Theorem \ref{primitive}) below. In general, by adding affinoid coordinates, we obtain a strict closed immersion $i:X\hookrightarrow Z:=\mathbb E_P\times\mathbb B^d$. Let $h:W\to Z$ be the log blow-up along $I$. Then $Z$ and $W$ are log smooth over $C$, and $Y=X\times_ZW$ by Proposition \ref{pullback}. Write $i':Y\hookrightarrow W$ for the induced strict closed immersion. Since $\operatorname{dim.tr}i=0$, Theorem \ref{properbaselocal}(2), together with \cite[Lemma 4.5.7]{dllz2023logadic}, gives $$\mathcal O_X^+/p\simeq i^{-1}(\mathcal O_Z^+/p)\simeq^a i^{-1}Rh_*(\mathcal O_W^+/p)\simeq Rf_*i'^{-1}(\mathcal O_W^+/p)\simeq Rf_*(\mathcal O_Y^+/p).$$ This completes the proof.
	\end{proof}

	\begin{theorem} \label{primitive}
		Let $f:Y\to X$ be a proper log smooth morphism of fs log adic spaces log smooth over $\Spa(K,K^+)$. Let $\mathbb{L}$ be an $\F_p$-local system on $Y_{\ket}$. Then for all $i \geq 0$, there is an almost isomorphism of sheaves of almost $\mathcal{O}^+_{X_{\ket}}$-modules $$(R^if_{\ket,*}\mathbb{L}) \otimes \mathcal{O}_{X_{\ket}}^{+}/p \simeq R^if_{\ket,*}(\mathbb{L}\otimes \mathcal{O}_{Y_{\ket}}^{+}/p).$$
	\end{theorem}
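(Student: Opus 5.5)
The plan is to reduce to stalks at log geometric points of $X$ and then to the absolute primitive comparison theorem for proper log smooth log adic spaces over a log geometric point, which is available in \cite{dllz2023logadic}.

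\textbf{Step 1: the comparison map.} By adjunction there is a natural map
$$(R^if_{\ket,*}\mathbb{L})\otimes\mathcal{O}_{X_{\ket}}^+/p\longrightarrow R^if_{\ket,*}(\mathbb{L}\otimes\mathcal{O}_{Y_{\ket}}^+/p).$$
We check that it is an almost isomorphism on stalks at every log geometric point $\zeta\to X$, which is enough by the conservativity of log geometric points. Let $\xi=\Spa(l,l^+)$ be the underlying geometric point.

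\textbf{Step 2: reduction to the fibre over $\xi$.} Since $f$ is proper and $\xi\to X$ is strict, the argument of Proposition \ref{geompoint} together with the proper base change theorems (Theorem \ref{properbaselocal}(2) for $\mathbb{L}$, which is $p$-torsion, and Theorem \ref{properbasestr}) reduces the problem to the fs fibre product $Y_\xi:=Y\times_X\xi\to\xi$. Here $\mathbb{L}\otimes\mathcal{O}^+/p$ on the fibre should be identified with the restriction. For this, use \cite[Lemma 4.5.7]{dllz2023logadic}: $\mathcal{O}^+/p$ on a strict closed subspace is, almost, the restriction of $\mathcal{O}^+/p$.

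\textbf{Step 3: passing to the limit over the Kummer tower.} As in the proof of Theorem \ref{properbaselocal}, the stalk at $\zeta$ is
$$\varinjlim_m H^i_{\ket}(Y_\xi\times_{\mathbb{E}_1}\mathbb{E}_m,-),$$
where $\mathbb{E}_m$ is modeled on $\tfrac1mP$. Each $Y_{\xi,m}:=Y_\xi\times_{\mathbb{E}_1}\mathbb{E}_m$ is proper and log smooth over the fs log point $\xi_m$, because log smoothness is stable under fs base change. Passing to the colimit, we are reduced to showing that for a proper log smooth fs log adic space $Z$ over the log geometric point $\zeta$ (so over $\Spa(l,l^+)$ with $l$ separably closed) the map
$$H^i(Z_{\ket},\mathbb{L})\otimes l^+/p\to H^i(Z_{\ket},\mathbb{L}\otimes\mathcal{O}^+/p)$$
is an almost isomorphism. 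The compatibility of this colimit with $\otimes\,l^+/p$ on the left-hand side is where exactness of filtered colimits is used.

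\textbf{Step 4: the absolute comparison and the main obstacle.} The absolute statement is the primitive comparison theorem for proper log smooth spaces, \cite[Theorem 6.2.1]{dllz2023logadic}. That theorem is stated over algebraically closed fields with trivial log structure on the base, and extending it to a log point base with $l$ only separably closed is the main obstacle. My first attempt is to note that the log structure on $\zeta$ is divisible, so Kummer covers of the base are trivial. Then $Z$ is log smooth over $\Spa(l,l^+)$ with its log structure pulled back through a divisible monoid, and \'etale locally it is modeled on $l\langle Q\rangle$ with $Q$ an fs extension of $P_{\Q_{\geq0}}$. The pro-Kummer étale perfectoid-cover argument of \cite{dllz2023logadic}, which rests on almost vanishing of $\mathcal{O}^+/p$-cohomology on log affinoid perfectoids together with finiteness via Scholze's method, goes through in this setting. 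The completion $C=\widehat{\bar l}$ handles the separable-versus-algebraic issue, since purely inseparable extensions do not change the étale site. If that fails, I will instead argue directly with the colimit over $m$ of $Y_{\xi,m}$ over the fs log points $\xi_m$, on which the vertical log structure is handled by Kummer descent.
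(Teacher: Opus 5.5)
There is a genuine gap, and it sits in the step you flag as the ``main obstacle'': Step 4 carries the whole content of the theorem, and you only assert it. The primitive comparison theorem of \cite{dllz2023logadic} concerns proper log smooth spaces over $\Spa(C,\mathcal O_C)$ with \emph{trivial} log structure on the base. Its proof uses that such spaces are \'etale locally toric charts $\mathbb{E}_Q\times\mathbb{B}^d$ over a field, which admit log affinoid perfectoid towers. Your fibres $Y_{\xi,m}$ are instead log smooth over an fs log point with vertical log structure. \'Etale locally they look like $l\langle Q\rangle\otimes_{l\langle P\rangle}l$ times a polydisc; at a point of the deepest stratum $P\setminus\{0\}\mapsto 0$, giving e.g.\ $l\langle x,y\rangle/(xy)$. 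So their underlying spaces can be singular and reducible, and they are not modeled on $l\langle Q\rangle$ as you write. Re-establishing almost vanishing on the corresponding perfectoid limits, and the finiteness argument, in this setting is a new theorem, not a routine check. Step 3 is also not well posed. The log structure of $\zeta$ is modeled on $P_{\Q_{\geq0}}$, which is not finitely generated, so ``a proper log smooth fs log adic space over $\zeta$'' modeled on an fs monoid containing $P_{\Q_{\geq0}}$ does not exist. Likewise, the colimit over $m$ is not the Kummer \'etale cohomology of any fs space. Your fallback at the finite levels $\xi_m$ needs the same missing comparison, and ``Kummer descent'' is left unspecified. (The separable versus algebraic issue is moot, since all residue fields here have characteristic $0$.)

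The idea you are missing is that no log primitive comparison is needed: the paper pushes everything to the \'etale site of the underlying spaces. After reducing to $\mathbb L=\F_p$, \cite[Lemma 4.5.8]{dllz2023logadic} lets $R\epsilon_*$ commute with $-\otimes\mathcal O^+/p$. Since $\epsilon_X\circ f_{\ket}=\underline f_{\et}\circ\epsilon_Y$, applying $R\epsilon_{X,*}$ turns your comparison map into the comparison map for the proper underlying morphism $\underline f$ with coefficients $A:=R\epsilon_{Y,*}\F_p$. By Lemma \ref{constructible} (ultimately Theorem \ref{kummervsclassical}), $A$ is Zariski-constructible. Hence \cite[Proposition 3.6]{bhatthansen2022constructible} reduces to $A=q_{\et,*}\F_p$ with $q$ finite. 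Heuer's relative primitive comparison \cite[Theorem 4.1]{heuer2024primitive}, applied to $q$ and to $\underline f\circ q$, then concludes, and it needs no smoothness of $\underline f$ or of any fibre. Running the same argument over every quasi-compact Kummer \'etale $U\to X$ gives $R\Gamma(U_{\ket},\mathcal C|_U)\simeq^a 0$ for the cone $\mathcal C$, hence the sheaf-level statement on $X_{\ket}$. No fibres, log geometric points or perfectoid covers are involved. The same device would also repair your Step 4 at each finite level, since $\underline{Y_{\xi,m}}$ is proper and $R\epsilon_*\F_p$ on it is Zariski-constructible.
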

	
	\begin{proof}
		Arguing Kummer \'etale locally, we may assume that $\mathbb L$ is the constant sheaf $\F_p$. Write $\underline f:\underline Y \to \underline X$ for the underlying proper morphism, and let $\epsilon_T:T_{\ket}\to\underline T_{\et}$ be the natural morphism of sites for $T=X,Y$. We consider the natural map $$\theta_f:(Rf_{\ket,*}\F_p)\otimes_{\F_p}\mathcal O_{X_{\ket}}^+/p \longrightarrow Rf_{\ket,*}(\mathcal O_{Y_{\ket}}^+/p).$$ Put $A:=R\epsilon_{Y,*}\F_p$. By Lemma \ref{constructible}, we have $A\in D^b_{zc}(\underline Y_{\et},\F_p)$. Applying \cite[Lemma 4.5.8]{dllz2023logadic} to the morphism from $T$ to $\underline T$ (with trivial log structure), we obtain, for complexes $B$ of $\F_p$-sheaves on $T_{\ket}$, $$(R\epsilon_{T,*}B)\otimes_{\F_p}\mathcal O_{\underline T_{\et}}^+/p\xrightarrow{\simeq}R\epsilon_{T,*}(B\otimes_{\F_p}\mathcal O_{T_{\ket}}^+/p).$$ Applying this for $T=X,Y$ and using $\epsilon_X\circ f_{\ket}=\underline f_{\et}\circ \epsilon_Y$, we can identify $R\epsilon_{X,*} \theta_f$ with
		\begin{equation}\label{primitive-etale-comparison}
			(R\underline f_{\et,*}A)\otimes_{\F_p}\mathcal O_{\underline X_{\et}}^+/p\longrightarrow R\underline f_{\et,*}(A\otimes_{\F_p}\mathcal O_{\underline Y_{\et}}^+/p).
		\end{equation}

		We claim that $R\epsilon_{X,*} \theta_f$ is an almost quasi-isomorphism. By \cite[Proposition 3.6]{bhatthansen2022constructible} (which also applies for general $K^+$), it suffices to consider $A=q_{\et,*}\F_p$ for a finite morphism $q:T\to\underline Y$. Since $Rq_{\et,*}\F_p=q_{\et,*}\F_p$, the relative primitive comparison theorem for rigid spaces \cite[Theorem 4.1]{heuer2024primitive}, applied to $q$, gives $$(q_{\et,*}\F_p)\otimes_{\F_p}\mathcal O_{\underline Y_{\et}}^+/p\simeq^a Rq_{\et,*}(\mathcal O_{T_{\et}}^+/p).$$ By compatibility with composition, \eqref{primitive-etale-comparison} is then identified, up to almost quasi-isomorphism, with the comparison map for the proper morphism $\underline f\circ q$. Applying the same theorem to $\underline f\circ q$ proves the claim.

		Finally, let $U\to X$ be a Kummer \'etale morphism with $U$ quasi-compact, and let $f_U:Y\times_XU\to U$ be the base change of $f$, with the fiber product taken in fs log adic spaces. The same argument as above shows that $R\epsilon_{U,*}(\theta_f|_U)$ is an almost quasi-isomorphism. If $\mathcal C$ is the cone of $\theta_f$, it follows that
		$$R\Gamma(U_{\ket},\mathcal C|_U)\simeq R\Gamma(\underline U_{\et},R\epsilon_{U,*}(\mathcal C|_U))\simeq^a0.$$
		These $U$ form a basis of $X_{\ket}$, so sheafifying the cohomology presheaves gives $\mathcal H^i(\mathcal C)^a=0$ for all $i$. Thus $\theta_f$ is an almost quasi-isomorphism. This completes the proof.
	\end{proof}

	We now pass from $\mathcal O_X^+/p$ to $\mathcal O_X^+$ on the Kummer \'etale site.

	\begin{thm} \label{acyclicintegral2}
		Let $X$ be an analytic fs log rigid variety log smooth over $C$, and let $f:Y\to X$ be a log blow-up along a coherent ideal $\mathcal{J}\subset\mathcal{M}_X$ whose geometric stalks are nonempty. Then the adjunction map $$\mathcal{O}_{X}^{+} \to Rf_*(\mathcal{O}_{Y}^{+})$$ is an almost quasi-isomorphism on the Kummer \'etale site.
	\end{thm}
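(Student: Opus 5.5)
The idea is to deduce the statement for $\mathcal O^+$ from two earlier results: the almost acyclicity of $\mathcal O^+/p$ (Theorem \ref{acyclicintegral}) and the acyclicity of $\mathcal O$ (Theorem \ref{acycliccoh}). Since $\mathcal O^+_X$ is $p$-torsion free, we have exact sequences $0\to\mathcal O^+\xrightarrow{p^m}\mathcal O^+\to\mathcal O^+/p^m\to0$ on $X_{\ket}$ and $Y_{\ket}$. The plan has three steps.

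\textbf{Step 1: reduce modulo $p^m$.} Let $\mathcal C$ be the cone of $\mathcal O_X^+\to Rf_*\mathcal O_Y^+$. Applying the five lemma to the sequences $0\to\mathcal O^+/p^{m-1}\xrightarrow{p}\mathcal O^+/p^m\to\mathcal O^+/p\to 0$, an induction starting from Theorem \ref{acyclicintegral} shows that $\mathcal O_X^+/p^m\to Rf_*(\mathcal O_Y^+/p^m)$ is an almost quasi-isomorphism for every $m$. Hence $\mathcal C\otimes^L\Z/p^m\simeq^a0$ for every $m$. In particular, multiplication by $p$ is an almost isomorphism on each cohomology sheaf $\mathcal H^i(\mathcal C)$.

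\textbf{Step 2: invert $p$.} We next show that $\mathcal C[1/p]$ vanishes, i.e.\ that $\mathcal O_X^+[1/p]\to (Rf_*\mathcal O_Y^+)[1/p]$ is a quasi-isomorphism on $X_{\ket}$. To do this, work on a basis of quasi-compact (affinoid) objects $U\in X_{\ket}$. On such $U$, the cohomology of $\mathcal O^+$ on $U_{\ket}$ and on $(Y\times_XU)_{\ket}$ commutes with inverting $p$, giving Kummer étale cohomology of $\mathcal O$. By \cite[Lemma 4.5.8]{dllz2023logadic}-type comparisons (log smoothness gives $R\epsilon_*\mathcal O_{\ket}\simeq\mathcal O_{\et}$ up to the $\overline{\mathcal M}^{\mathrm{gp}}$-contributions, which match on $X$ and $Y$), this reduces to the coherent statement, Theorem \ref{acycliccoh}.

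\textbf{Step 3: combine.} From Steps 1 and 2, each $\mathcal H^i(\mathcal C)$ is almost $p$-divisible and almost $p$-torsion-free (Step 1) and vanishes after inverting $p$ (Step 2). Such a sheaf is almost zero, provided we know that its $p^\infty$-torsion is almost bounded.

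I expect the main obstacle to be this last step, and also making Step 2 rigorous, since Kummer étale cohomology of $\mathcal O$ need not be directly coherent. What I would try first is to avoid torsion considerations by working with derived $p$-completeness. The complex $Rf_*\mathcal O_Y^+$ is derived $p$-complete because $\mathcal O_Y^+$ is $p$-adically complete; $\mathcal O_X^+$ is also derived $p$-complete. Hence $\mathcal C$ is derived $p$-complete. A derived $p$-complete object with almost vanishing reduction modulo $p$ is almost zero, by a derived Nakayama argument in the almost category, applied sectionwise on affinoid $U$ and using $R\lim$. This would make Step 2 unnecessary. I would still check Step 2 as a consistency check, and use it only if completeness of $\mathcal O^+$ on $X_{\ket}$ turns out to be problematic (it is only almost complete in general, which the almost formalism tolerates).
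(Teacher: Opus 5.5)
Your Steps 1--3 are essentially the paper's proof. The paper also obtains the mod $p^n$ statements by induction from Theorem \ref{acyclicintegral}. It then passes to $\mathcal O/\mathcal O^+=\varinjlim_{\times p}\mathcal O^+/p^n$, using that $Rf_*$ commutes with filtered colimits on qcqs $X$, and compares the sequences $0\to\mathcal O^+\to\mathcal O\to\mathcal O/\mathcal O^+\to0$ on $X$ and $Y$. Combining $\mathcal C\otimes^L\Z/p\simeq^a0$ with $\mathcal C[1/p]\simeq0$ is the same argument, since $\mathcal C\otimes^L\Q_p/\Z_p=\varinjlim_{n,\times p}\mathcal C\otimes^L\Z/p^n$.

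The step you single out as the main obstacle is not one, and no bound on the torsion is needed. Put $M=\mathcal H^i(\mathcal C)$. Step 1 shows that $M[p]$ is almost zero. By induction on $0\to M[p]\to M[p^n]\xrightarrow{p}M[p^{n-1}]$, using $\mathfrak m^2=\mathfrak m$ for the maximal ideal $\mathfrak m$ of $\mathcal O_C$, each $M[p^n]$ is almost zero. Hence so is $M[p^\infty]=\bigcup_nM[p^n]$, and Step 2 says exactly that $M=M[p^\infty]$.

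The parenthetical justification in Step 2 is wrong. $\overline{\mathcal M}^{\mathrm{gp}}$-contributions appear for torsion coefficients (Corollary \ref{kummervsclassicalcoh}), not for $\mathcal O$. Even if they appeared, they would not match on $X$ and $Y$, since $\overline{\mathcal M}^{\mathrm{gp}}$ changes under $f$. What you need, and what the paper uses, is that coherent cohomology on the Kummer \'etale site agrees with analytic cohomology \cite[Theorem 4.3.7]{dllz2023logadic}. Concretely, let $U\to X$ be an affinoid Kummer \'etale object. Then $U$ is again log smooth over $C$, and $Y\times_XU\to U$ is the log blow-up along $\mathcal J\mathcal M_U$ by Proposition \ref{pullback}, so Theorem \ref{acycliccoh} applies to it. Together with $\mathcal O^+[1/p]=\mathcal O$ and the commutation of cohomology of qcqs objects with $\varinjlim_{\times p}$, this gives $\mathcal H^i(\mathcal C)[1/p]=0$.

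The derived-completeness shortcut you say you would try first is the one place with a genuine gap, so it should not replace Step 2. $p$-adic completeness of $\mathcal O^+(U)$ for affinoid $U$ does not make $\mathcal O^+$ derived $p$-complete in $D(X_{\ket})$. That would require every $H^i(U_{\ket},\mathcal O^+)$ to be derived $p$-complete, not just $H^0$. For $i\geq1$ these groups are $p^\infty$-torsion, because they vanish after inverting $p$. So you would essentially need a uniform, Bartenwerfer-type bound on their torsion on the Kummer \'etale site, which is not available here. Saying that the almost formalism tolerates mere almost completeness does not supply this bound.
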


	\begin{rem}
		By Remark \ref{notacyclic}, the theorem fails on the \'etale site.
	\end{rem}

	\begin{proof}
		The assertion is local on $X$, so we may assume that $X$ is quasi-compact and quasi-separated. Since $\mathcal O_X^+$ and $\mathcal O_Y^+$ are $p$-torsion free, the short exact sequences
		$$0\longrightarrow\mathcal O^+/p^{n-1}\xrightarrow{\ p\ }\mathcal O^+/p^n\longrightarrow\mathcal O^+/p\longrightarrow0$$
		and Theorem \ref{acyclicintegral} give, by induction on $n$, almost quasi-isomorphisms
		$$\mathcal O_X^+/p^n\longrightarrow Rf_*(\mathcal O_Y^+/p^n),\qquad n\geq1.$$

		For $Z=X,Y$, we have
		$$\mathcal O_Z/\mathcal O_Z^+=\varinjlim_{n,\,\times p}\mathcal O_Z^+/p^n.$$ Since the higher direct images of $f$ commute with filtered colimits, taking filtered colimits gives an almost quasi-isomorphism
		$$\mathcal O_X/\mathcal O_X^+\longrightarrow Rf_*(\mathcal O_Y/\mathcal O_Y^+).$$
		We then compare, via the adjunction maps, the short exact sequences
		$$0\longrightarrow\mathcal O_Z^+\longrightarrow\mathcal O_Z\longrightarrow\mathcal O_Z/\mathcal O_Z^+\longrightarrow0.$$
		The middle terms are identified by Theorem \ref{acycliccoh}, applied after Kummer \'etale base change; here coherent cohomology on the Kummer \'etale and analytic sites agrees by \cite[Theorem 4.3.7]{dllz2023logadic}. The quotient terms are almost identified by the preceding paragraph. This completes the proof.
	\end{proof}

	\section{Global resolutions of toric singularities} \label{globalres}

	In this section, we prove that a quasi-compact fs log rigid variety log smooth over $K$ admits a global resolution by log blow-ups whose boundary is a strictly normal crossing divisor. The local resolution was established in Corollary \ref{localresolution}; the main issue here is to make the subdivisions on different charts compatible. To this end, we use the canonical subdivisions of \cite{wlodarczyk2022toric}, and descend the associated monoid ideals to coherent ideals of $\mathcal{M}_X$. After obtaining a normal crossing boundary, we separate its branches by a further sequence of log blow-ups.

	\subsection{Canonical subdivisions of fans}

	We keep the notation $P_{\sigma}$, $\mathbb{E}_{\sigma}$ and $\mathbb{E}_{\Sigma}$ from Section \ref{logblowup}. Let $X$ be an fs log rigid variety over $K$. For a geometric point $\overline{x}$ of $X$, the sharp monoid $\overline{\mathcal{M}}_{X,\overline{x}}$ determines the cone $\sigma_{\overline{x}}$ and the lattice $N_{\overline{x}}$ given by
	$$\sigma_{\overline{x}}:=\operatorname{Hom}(\overline{\mathcal{M}}_{X,\overline{x}},\R_{\geq0}),\qquad N_{\overline{x}}:=\operatorname{Hom}(\overline{\mathcal{M}}_{X,\overline{x}}^{\mathrm{gp}},\Z).$$
	On a chart modeled on $P$, localizing $P$ and then passing to the sharpening identifies these cones with faces of $\operatorname{Hom}(P,\R_{\geq0})$. On overlaps of charts, these identifications are induced by the \'etale sheaf $\overline{\mathcal{M}}_X$. In what follows, we only use these local cones and their identifications.

	We first recall the definition of star subdivisions from \cite[Definition 4.3.3]{wlodarczyk2022toric}, which slightly generalizes the classical one for the purpose of functoriality.

	\begin{defn}[{\cite[Definition 4.3.3]{wlodarczyk2022toric}}]
		Let $\Sigma$ be a finite fan. A center of a star subdivision of $\Sigma$ is a finite set $V$ of nonzero primitive lattice vectors in $|\Sigma|$ such that no cone of $\Sigma$ contains two elements of $V$. The star subdivision $V\cdot\Sigma$ of $\Sigma$ at $V$ is obtained by taking the star subdivision at each $v\in V$.
	\end{defn}

	Since the sets of cones affected by different elements of $V$ are disjoint, the subdivision $V\cdot\Sigma$ does not depend on the order in which these star subdivisions are taken. The same terminology applies to finite disjoint unions of fans. We use the following form of W{\l}odarczyk's functorial desingularization theorem.

	\begin{thm}[{\cite[Theorem 4.6.1]{wlodarczyk2022toric}}] \label{canonicalfans}
		Let $\Sigma$ be a finite disjoint union of fans. Then there is a finite sequence $$\Sigma_n\longrightarrow\cdots\longrightarrow\Sigma_0=\Sigma,\qquad \Sigma_{i+1}=V_i\cdot\Sigma_i,$$ such that each $V_i$ is a center of a star subdivision of $\Sigma_i$ and does not meet any regular cone of $\Sigma_i$, and $\Sigma_n$ is regular. Moreover, the centers $V_i$ are functorial with respect to surjective local isomorphisms of cone complexes (i.e., morphisms mapping each cone, together with its induced lattice, isomorphically onto a cone). In particular, the sequence respects identifications between faces; namely, the $i$-th step is the identity on the faces which do not meet $V_i$.
	\end{thm}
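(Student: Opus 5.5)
This statement is W{\l}odarczyk's theorem, so the main step is to check that \cite[Theorem 4.6.1]{wlodarczyk2022toric} gives exactly the form stated here. Still, I would organize an argument (or the verification of the citation) around a canonical invariant on cones. The invariant should only see the cone together with its induced lattice. It should therefore be preserved by every surjective local isomorphism of cone complexes. I would proceed in two stages.

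\emph{Stage 1: simplicialization.} While some cone is non-simplicial, take the non-simplicial cones of minimal dimension. In each such $\tau$, take the primitive vector on the ray through the sum of its primitive ray generators; this vector is intrinsic to $(\tau,N_\tau)$. Two distinct such minimal cones cannot lie in a common cone together with both centers: a cone $\sigma$ containing both centers would contain both minimal cones as faces, since each center lies in the relative interior of its cone. I would then argue by dimension counting and minimality that, after passing to the faces where the centers actually land, a set $V$ satisfying the ``no two in one cone'' condition can be chosen canonically. If this fails, I would refine the choice to the cones that are maximal for the lexicographic invariant (dimension, number of rays).

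\emph{Stage 2: decreasing multiplicity.} For simplicial cones, use the multiplicity $\operatorname{mult}(\sigma)=[N_\sigma:\sum\Z v_i]$. Let $\mu$ be the maximal multiplicity, and among the cones with multiplicity $\mu$ take the minimal ones. For each such $\sigma$, every nonzero point of the fundamental parallelepiped lies in the relative interior of $\sigma$. The sum of these points, made primitive, is a canonical center $v_\sigma$. Star subdividing at $v_\sigma$ replaces $\sigma$ by cones of strictly smaller multiplicity, and leaves unchanged every cone not containing $v_\sigma$. This gives the ``identity on faces not meeting $V_i$'' assertion. Regular cones have multiplicity $1$, so they never contain a center. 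Induction on the pair (maximal multiplicity, number of cones realizing it) then terminates with a regular fan.

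The main obstacle is the center condition. Two distinct minimal cones with the maximal invariant may be faces of a common cone, and then subdividing at both simultaneously violates the definition of a center. Ordering them would destroy functoriality. My first attempt is to break ties by a finer canonical invariant, namely the dimension of the smallest cone containing both, and to subdivide only at the maximal stratum in each step. If that does not close the argument, I would fall back on the precise invariant in \cite[Section 4]{wlodarczyk2022toric} and verify the remaining claims directly from its construction: functoriality under surjective local isomorphisms, extension to finite disjoint unions (treated componentwise), and triviality on untouched faces.
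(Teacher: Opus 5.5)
The paper gives no proof of this statement. It imports it verbatim from \cite[Theorem 4.6.1]{wlodarczyk2022toric}, so your first sentence (check that W{\l}odarczyk's theorem has exactly this form, including functoriality and triviality on untouched faces) is the paper's entire argument. The self-contained argument you build around it, however, has genuine gaps in both stages. The examples below show the gaps are structural, because functoriality applied to fan automorphisms (which are surjective local isomorphisms) pins down the centers.

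\textbf{Stage 1.} Let $\sigma\subset\R^4$ be the cone generated by the eight vectors $(\pm1,\pm1,\pm1,1)$, i.e.\ the cone over a cube, with its face fan $\Sigma$.
Its minimal non-simplicial cones are the six cones over the square faces. All six barycenters, e.g.\ $(\pm1,0,0,1)$, lie in $\sigma$, so your claim that two minimal non-simplicial cones cannot have their centers in a common cone is false.

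More seriously, signed permutations of the first three coordinates are lattice automorphisms of $\Sigma$. A functorial $V_0$ must therefore be invariant under them, and since $|\Sigma|=\sigma$ it has at most one element. Hence $V_0\subset\{(0,0,0,1)\}$: any functorial procedure must start at the barycenter of the top-dimensional non-simplicial cone. No refinement of a minimal-dimension rule can work.

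Working top-down is right, but it must be organized as the classical barycentric subdivision of the \emph{original} non-simplicial cones in decreasing dimension. Naively repeating ``subdivide the current maximal non-simplicial cones'' does not terminate: for the cone over a square pyramid with base $\beta$, the cone $\beta+\R_{\geq0}v$ reproduces the same shape.

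\textbf{Stage 2.} Star subdividing a simplicial $\sigma=\operatorname{Cone}(v_1,\dots,v_k)$ at a primitive $v=\sum a_iv_i$ with all $a_i>0$ yields cones of multiplicity $a_i\operatorname{mult}(\sigma)$. Lowering the multiplicity therefore needs all $a_i<1$, and a sum of parallelepiped points usually violates this. For $\operatorname{Cone}((1,0),(1,3))$, your $v_\sigma=(1,1)+(1,2)=v_1+v_2$, and both new cones again have multiplicity $3$.

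Now take $v_1=(1,-1,1)$, $v_2=(1,1,-1)$, $v_3=(-1,1,1)$ in $\Z^3$. Then:
\begin{enumerate}
\item $\operatorname{mult}(\sigma)=4$, and every proper face has multiplicity at most $2$.
\item The nonzero parallelepiped points are $e_1=\tfrac12(v_1+v_2)$, $e_2=\tfrac12(v_2+v_3)$, $e_3=\tfrac12(v_1+v_3)$. These lie on facets, contradicting your interior claim, and no lattice point has all $0<a_i<1$.
\item Your $v_\sigma=(1,1,1)=v_1+v_2+v_3$ produces three cones of multiplicity $4$ of exactly the same lattice shape, so your procedure never terminates.
\end{enumerate}

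This is again forced. Coordinate permutations act on this fan, so the first nonempty functorial center must be $\{(1,1,1)\}$. Any functorial algorithm must therefore take a step that does not decrease the pair (maximal multiplicity, number of cones attaining it), so that pair cannot serve as your termination invariant.

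Progress can then come, e.g., from the $S_3$-invariant center $\{e_1,e_2,e_3\}$ on the multiplicity-$2$ faces, which lowers the maximal multiplicity to $2$. Your rule never produces such a center, since it places centers only in interiors of minimal cones of maximal multiplicity.

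Finding an invariant that handles both phenomena canonically is exactly the content of W{\l}odarczyk's proof. For this paper the citation is the proof, and your sketch should not be offered as a substitute.
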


	The following lemma shows that each step in Theorem \ref{canonicalfans} is the log blow-up along a coherent ideal. It also provides the compatibility needed to glue on \'etale covers.

	\begin{lem}[{\cite[Lemma 6.3.8]{wlodarczyk2022toric}}] \label{canonicalstarideal}
		Let $\Sigma$ be a finite fan and $V$ a center of a star subdivision of $\Sigma$. For $\sigma\in\Sigma$, $v\in V\cap\sigma$ and an integer $a>0$, put $$J_{\sigma,v,a}:=\{m\in P_{\sigma}\mid\langle m,v\rangle\geq a\},\qquad J_{\sigma,a}:=\prod_{v\in V\cap\sigma}J_{\sigma,v,a},$$ where $J_{\sigma,a}$ is understood to be the unit ideal if $V\cap\sigma=\varnothing$. Then for a sufficiently divisible integer $a>0$, the ideals $J_{\sigma,a}$, $\sigma\in\Sigma$, define a coherent ideal $\mathcal{J}_{V,a}\subset\mathcal{M}_{\Sigma}$ whose log blow-up is $\mathbb{E}_{V\cdot\Sigma}\to\mathbb{E}_{\Sigma}$. Moreover, these ideals are compatible with localization to faces and with lattice isomorphisms preserving $V$.
	\end{lem}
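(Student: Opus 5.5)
The plan is to work one cone at a time. First I check that each $J_{\sigma,a}$ gives the right subdivision of $\sigma$. Then I check that the family $(J_{\sigma,a})_{\sigma\in\Sigma}$ is compatible in the sense of the definition of coherent subdivisions. Finally I apply Proposition \ref{fanlogblowup}, and Proposition \ref{universal} to glue.

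\emph{Reduction to one ray.} Fix $\sigma$ with $V\cap\sigma\neq\varnothing$. Since no cone contains two elements of $V$, we have $V\cap\sigma=\{v\}$ for a single $v$, so $J_{\sigma,a}=J_{\sigma,v,a}$.

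\emph{Computing the support function.} The support function of $J_{\sigma,v,a}$ is
$$\varphi(u)=\min\{\langle m,u\rangle : m\in P_\sigma,\ \langle m,v\rangle\geq a\}.$$
I will show that, for $a$ sufficiently divisible, $\varphi$ is linear exactly on the cones $\operatorname{Cone}(\tau,v)$, where $\tau$ runs over the faces of $\sigma$ not containing $v$. These cones are precisely the maximal cones of the star subdivision at $v$.

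By Proposition \ref{fanlogblowup}, the log blow-up along $J_{\sigma,v,a}$ is $\mathbb E_{\Sigma_{J}}$. So it suffices to identify $\Sigma_J$ with $v\cdot\sigma$. The key tool is Gordan's lemma applied to the polyhedron $\{m\in\sigma^\vee : \langle m,v\rangle\geq a\}$. Its vertices are rational, and when $a$ is divisible enough they are lattice points. The lattice points of this polyhedron then generate $J_{\sigma,v,a}$ up to saturation, and the ideal is finitely generated.

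\emph{Identifying the domains of linearity.} For a maximal cone $\operatorname{Cone}(\tau,v)$ with $\tau$ a facet of $\sigma$ not containing $v$, I take the linear form $m_\tau$ defined by two conditions: $m_\tau$ vanishes on $\tau$, and $\langle m_\tau,v\rangle=a$. This $m_\tau$ is integral once $a$ is divisible by the relevant indices, and it lies in $\sigma^\vee$. Then $\varphi=m_\tau$ on $\operatorname{Cone}(\tau,v)$. The decomposition of $\sigma$ into these cones is exactly the domain-of-linearity decomposition of the concave function $\varphi$.

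\emph{Cones not containing $v$.} When $V\cap\sigma=\varnothing$, the ideal is the unit ideal and the subdivision of $\sigma$ is trivial, as required.

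\emph{Compatibility with faces.} I need $J_{\tau,a}=J_{\sigma,a}+P_\tau$ for every face $\tau\preceq\sigma$. Here $P_\tau=P_\sigma+\N(-m_0)$ for some $m_0\in P_\sigma$ cutting out $\tau$.
If $v\notin\tau$, then $\langle m_0,v\rangle>0$. Hence $km_0\in J_{\sigma,v,a}$ for large $k$, and it becomes a unit in $P_\tau$; so both sides are the unit ideal.
If $v\in\tau$, then $\langle m_0,v\rangle=0$, and the equality reduces to a lattice-point statement. Every $m\in P_\tau$ with $\langle m,v\rangle\geq a$ has the form $m'-km_0$ with $m'\in P_\sigma$ and $\langle m',v\rangle=\langle m,v\rangle$.
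Compatibility with lattice isomorphisms preserving $V$ is immediate, because the definition of $J$ is intrinsic. Then I choose one $a$ divisible enough for the finitely many cones of $\Sigma$.

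\emph{Gluing.} The coherent ideal $\mathcal J_{V,a}$ is obtained by gluing the local ideals, and its log blow-up glues the local $\mathbb E_{v\cdot\sigma}$ into $\mathbb E_{V\cdot\Sigma}$. Independence of charts follows from Proposition \ref{universal}.

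\emph{Main obstacle.} The hardest step is the support-function computation. One must check that the saturation in Remark \ref{localblowup} produces exactly $P_{\operatorname{Cone}(\tau,v)}$. This includes the case where $\sigma$ is simplicial but not regular, where $v$ need not be a lattice-basis vector; that is where divisibility of $a$ is needed. If the direct argument is messy, I would instead compare with the normalized blow-up of the ideal $\{m:\langle m,v\rangle\geq a\}$, which is a standard weighted star subdivision.
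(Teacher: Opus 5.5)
Your proposal is correct and follows the same outline as the paper. You reduce to the single $v\in V\cap\sigma$, identify the per-cone log blow-up via Proposition \ref{fanlogblowup}, and check $J_{\sigma,a}+P_\tau=J_{\tau,a}$ by the same two cases: $v\in\tau$, where the inverted elements pair to zero with $v$, and $v\notin\tau$, where a multiple of the face-defining element lies in $J_{\sigma,v,a}$ and becomes a unit. The only difference is that the paper cites \cite[Lemma 6.3.8]{wlodarczyk2022toric} for the fact that $J_{\sigma,v,a}$ yields the star subdivision at $v$, whereas you verify it directly by showing $\varphi=m_\tau$ on $\operatorname{Cone}(\tau,v)$, which also makes ``sufficiently divisible'' explicit. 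The saturation issue you flag as the main obstacle is already settled by the identity $P_i=P_{\sigma_i}$ in the proof of Proposition \ref{fanlogblowup}.
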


	\begin{proof}
		By \cite[Lemma 6.3.8]{wlodarczyk2022toric} and Proposition \ref{fanlogblowup}, the log blow-up along $J_{\sigma,v,a}$ corresponds to the star subdivision at $v$ for sufficiently divisible $a$. Since $\Sigma$ is finite and each cone contains at most one element of $V$, a common such $a$ gives the required subdivision on every cone.

		For a face $\tau\preceq\sigma$, we have
		$$J_{\sigma,v,a}+P_\tau=\begin{cases}
			J_{\tau,v,a},&v\in\tau,\\
			P_\tau,&v\notin\tau.
		\end{cases}$$
		Indeed, if $v\in\tau$, the elements inverted pair to zero with $v$; otherwise, some $b\in P_\sigma \cap \tau^\perp$ satisfies $\langle b,v\rangle\geq a$ and becomes a unit. Hence $J_{\sigma,a}+P_\tau=J_{\tau,a}$, so these ideals define the required coherent ideal. Since the pairing is invariant under lattice isomorphisms and vanishes on units, the construction is also compatible with lattice isomorphisms and sharpening, and the resulting ideals therefore descend in $\overline{\mathcal{M}}_X$.
	\end{proof}

	\begin{prop} \label{globalncresolution}
		Let $X$ be a quasi-compact fs log rigid variety log smooth over $K$, and let $X^\circ$ be the maximal open subspace of $X$ on which the log structure is trivial. Then there is a finite sequence of log blow-ups $X_r\to\cdots\to X_0=X$, each of which is an isomorphism over $X^\circ$, such that $X_r$ is smooth over $K$ and $D_r:=(X_r\setminus X^\circ)_{\mathrm{red}}$ is a normal crossing divisor. Moreover, the log structure on $X_r$ is the compactifying log structure associated with $D_r$.
	\end{prop}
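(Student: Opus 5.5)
The plan is to apply W{\l}odarczyk's canonical desingularization (Theorem \ref{canonicalfans}) to a finite disjoint union of the local cones of $X$. We then use Lemma \ref{canonicalstarideal} to realize each star subdivision as the log blow-up along a coherent ideal of $\mathcal{M}_X$. Functoriality is what lets the local ideals descend through the \'etale sheaf $\overline{\mathcal{M}}_X$.

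\emph{Setup.} Since $X$ is quasi-compact, choose, by \cite[Proposition 3.1.10]{dllz2023logadic}, a finite \'etale cover $\{U_\alpha\to X\}$ with strictly \'etale maps $U_\alpha\to\mathbb{E}_{\sigma_\alpha}\times\mathbb{B}^{d_\alpha}$, where $\sigma_\alpha=\operatorname{Hom}(P_\alpha,\R_{\geq0})$. Let $\Sigma_0:=\coprod_\alpha\sigma_\alpha$, with each $\sigma_\alpha$ regarded as a fan consisting of its faces. Consider the cone complex obtained from $\Sigma_0$ by gluing faces along the identifications $\sigma_{\overline x}\simeq$ face of $\sigma_\alpha$ induced by $\overline{\mathcal{M}}_X$ on overlaps $U_\alpha\times_XU_\beta$. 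Each such identification, and the map from $\Sigma_0$ to this complex, is a surjective local isomorphism of cone complexes.

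\emph{Step 1: one subdivision step.} Theorem \ref{canonicalfans} gives centers $V_0$ on $\Sigma_0$ compatible with these face identifications. Lemma \ref{canonicalstarideal} gives, for a common sufficiently divisible $a$, ideals $J_{\sigma,a}\subset P_\sigma$ that are compatible with localization to faces and with lattice isomorphisms preserving $V_0$. Pulling back along the charts gives coherent ideals $\mathcal{J}_\alpha\subset\mathcal{M}_{U_\alpha}$. These depend only on their images in $\overline{\mathcal{M}}$, because the pairing with $v$ kills units. Moreover, the two pullbacks to $U_\alpha\times_XU_\beta$ agree, since at each geometric point both are computed from the same cone $\sigma_{\overline x}$ with the same center. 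Hence the $\mathcal{J}_\alpha$ glue to a coherent ideal $\mathcal{J}_0\subset\mathcal{M}_X$.

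Let $X_1:=\operatorname{LogBl}_{\mathcal{J}_0}(X)$. By Propositions \ref{pullback} and \ref{fanlogblowup}, $X_1\times_XU_\alpha\simeq U_\alpha\times_{\mathbb{E}_{\sigma_\alpha}}\mathbb{E}_{V_0\cdot\sigma_\alpha}$. Consequently, $X_1$ again has strictly \'etale charts, now modeled on the cones of $V_0\cdot\Sigma_0$, and these are compatible with the induced face identifications. Proposition \ref{logblowupproperties} shows that $X_1\to X$ is an isomorphism over $X^\circ$, and that $X_1$ is log smooth with trivial locus $X^\circ$. We iterate this along the sequence $\Sigma_n\to\cdots\to\Sigma_0$, at each step using the functoriality of the centers to glue.

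\emph{Step 2: the final space.} After $r=n$ steps, all local cones are regular. As in Corollary \ref{localresolution}, $X_r$ is then \'etale locally strictly \'etale over $\mathbb{E}_\tau\times\mathbb{B}^d$ with $P_\tau\simeq\N^k\oplus\Z^{m}$, so $X_r$ is smooth. Locally the boundary is given by the vanishing of coordinates, so $D_r$ is a normal crossing divisor. It need not be strictly normal crossing, since globally the branches may not be separated. Finally, the proposition identifying $\mathcal{M}$ with $j_*\mathcal{O}_U^\times\cap\mathcal{O}_X$ shows that the log structure on $X_r$ is the compactifying log structure of $D_r$.

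\emph{Main obstacle.} The hard part is the gluing in Step 1. We must verify that the canonical centers $V_i$ really descend, that is, that \'etale transition maps between charts induce exactly the surjective local isomorphisms of cone complexes to which W{\l}odarczyk's functoriality applies. We must also check that this persists after each blow-up, because the new charts of $X_{i+1}$ are not the ones we started with. I would handle this by working with the cone complex attached to $\overline{\mathcal{M}}_X$ on $X_i$ directly, indexed by geometric points modulo specialization. Its local isomorphism property follows from the chart description of Section \ref{logblowup}. Since the ideals of Lemma \ref{canonicalstarideal} are defined intrinsically from $\sigma_{\overline x}$ and $V_i$, no choice is involved, which is what makes the descent work.
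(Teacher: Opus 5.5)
Your proposal follows the paper's proof essentially step for step: a finite strictly \'etale toric cover, Theorem \ref{canonicalfans} applied to the disjoint union of the local fans, descent of the ideals of Lemma \ref{canonicalstarideal} through $\overline{\mathcal{M}}_{X_i}$, Proposition \ref{pullback} to identify each blow-up with the star subdivision on the charts, and iteration. Your explicit appeal to the identification $\mathcal{M}\simeq j_*(\mathcal{O}_U^\times)\cap\mathcal{O}_X$ for the final assertion also fills in a point the paper's proof leaves implicit. One small caution: the object you obtain by gluing faces of $\Sigma_0$ need not be an honest cone complex, since a non-strict boundary can identify two faces of the same cone. You never actually use it, however, because Theorem \ref{canonicalfans} applied to $\Sigma_0$ itself already gives the compatibility of the centers with the face identifications.
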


	\begin{proof}
		The proof is similar to that of \cite[Theorem 6.5.1]{wlodarczyk2022toric}. Choose finitely many strictly \'etale morphisms $U_\alpha\to X$ covering $X$, such that each $U_\alpha$ is affinoid and admits a strictly \'etale toric chart $U_\alpha\to\mathbb{E}_{\sigma_\alpha}$. Applying Theorem \ref{canonicalfans} to the disjoint union of these fans, we obtain a single sequence of star subdivisions. At a geometric point of an overlap, strictness identifies the two characteristic monoids, and hence identifies the corresponding faces together with their lattices. By Theorem \ref{canonicalfans}, the centers are compatible with these identifications at every step.

		We construct the $X_i$ inductively. Suppose that $X_i$ has been constructed. Choose an integer $a_i>0$ as in Lemma \ref{canonicalstarideal} for all the fans at the $i$-th step. By Lemma \ref{canonicalstarideal}, the images in $\overline{\mathcal{M}}_{X_i}$ of the ideals $J_{\sigma,a_i}$ on the charts agree on overlaps, and hence descend to a coherent ideal $\overline{\mathcal{J}}_i \subset \overline{\mathcal{M}}_{X_i}$. Let $\mathcal{J}_i \subset \mathcal{M}_{X_i}$ be its inverse image, and put
		$$X_{i+1}:=\operatorname{LogBl}_{\mathcal{J}_i}(X_i).$$
		By Proposition \ref{pullback}, the pullback of $X_{i+1}\to X_i$ to each chart is the log blow-up associated with the star subdivision. This completes the induction step, and all the identifications on overlaps are preserved. By Theorem \ref{canonicalfans}, all fans become regular after finitely many steps, which gives the smooth $X_r$ with a normal crossing divisor $D_r$. This completes the proof.
	\end{proof}

	\subsection{Transforming an NC divisor into an SNC divisor}

	In this subsection, let $X$ be a quasi-compact smooth rigid variety over $K$, equipped with the compactifying log structure associated with a normal crossing divisor $D$. We show that $D$ can be transformed into a strictly normal crossing divisor by a sequence of log blow-ups. We follow the argument of \cite[6.2]{wlodarczyk2022toric} with some modifications for the rigid analytic setting.

	More precisely, we construct the centers directly as the closed subspaces defined by coherent ideals. These ideals are defined on \'etale charts on which $D$ splits into smooth branches, and descend to $X$ since they agree on overlaps. This avoids the use of strata, whose description in terms of irreducible components requires additional care in the rigid analytic setting.

	We start with the following definition.

	\begin{defn} \label{unresolvedbranches}
		Let $\Delta$ be a reduced divisor and $H$ a strictly normal crossing divisor on $X$ without common components, such that $B:=\Delta\cup H$ is a normal crossing divisor. For a geometric point $\overline{x}$ of $X$, take an \'etale chart around $\overline{x}$ with coordinates such that
		$$\Delta=V(x_1\cdots x_b),\qquad H=V(y_1\cdots y_h),$$
		where all these branches pass through $\overline{x}$. We call $b(\overline{x}):=b$ the number of branches of $\Delta$ at $\overline{x}$, and put $m:=\max_{\overline{x}}b(\overline{x})$, which is finite.
	\end{defn}

	To transform a normal crossing divisor $D$ into a strictly normal crossing divisor, we start with $\Delta=D$ and $H=\varnothing$, and reduce $m$ at each step until $m\leq1$. The following lemma associates a coherent ideal of $\mathcal{M}_B$ with the closed subset of points $\overline{x}$ with $b(\overline{x})=m$.

	\begin{lem} \label{maximumbranchideal}
		In the situation of Definition \ref{unresolvedbranches}, suppose that $m\geq2$, and let $\mathcal{M}_B$ be the compactifying log structure associated with $B$. Then there exists a coherent ideal $\mathcal{J}_m \subset \mathcal{M}_B$ such that the coherent ideal $\mathcal{I}_m:=\mathcal{J}_m\mathcal{O}_X$ defines a smooth closed subspace $C_m\subset X$ of codimension $m$ with the following properties: the geometric points of $C_m$ are those $\overline{x}$ with $b(\overline{x})=m$, and locally at such a point, there is a coordinate system in which $$\mathcal{I}_m=(x_1,\ldots,x_m),\qquad H=V(y_1\cdots y_h).$$
	\end{lem}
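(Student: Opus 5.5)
Let me propose the natural construction: on each étale chart where $B$ splits into smooth branches, define the monoid ideal generated by the products of $m$ distinct branches of $\Delta$. Then I show it glues and cuts out the right locus.

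\textbf{Local definition.} Fix an étale chart $U\to X$ with coordinates as in Definition \ref{unresolvedbranches}, centered at a point with $b=m$, so that $\Delta\cap U=V(x_1\cdots x_b)$ with $b=m$ and $H\cap U=V(y_1\cdots y_h)$. The log structure $\mathcal{M}_B$ on $U$ is charted by the free monoid $\N^{m}\oplus\N^{h}$ on the classes $e_1,\dots,e_m,f_1,\dots,f_h$ of the local equations. I set
$$J_U:=\Big(\sum_{i=1}^m e_i\Big)+\N^{m+h}\subset \N^{m+h},\qquad \mathcal{J}_m|_U:=J_U\mathcal{M}_B .$$
The associated ideal of $\mathcal{O}_U$ is then the principal ideal $(x_1\cdots x_m)$. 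This is not what we want, so this naive choice fails.

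\textbf{Corrected local definition.} I instead use the ideal generated by the $m$ elements $\widehat e_i:=\sum_{j\ne i}e_j$, i.e. the products of $m-1$ of the branches. Its image in $\mathcal{O}_U$ is $(\prod_{j\neq i}x_j)_i$, whose zero locus is the union of the codimension-two loci $x_i=x_j=0$, still not $C_m$. So I take, for $m\ge2$,
$$J_U:=\Big(\textstyle\sum_{i=1}^m e_i\Big)\ \text{ in the monoid, but with } \mathcal{I}_m \text{ defined as the ideal } (x_1,\dots,x_m)\subset\mathcal{O}_U .$$
The plan is to realize $(x_1,\dots,x_m)$ as $\mathcal J_m\mathcal O_X$ by taking $\mathcal J_m|_U$ generated by the elements $e_1,\dots,e_m$ themselves, i.e. $J_U=(e_1,\dots,e_m)+\N^{m+h}$.

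\textbf{Gluing.} Intrinsically, $\mathcal{J}_m$ is the ideal of $\mathcal{M}_B$ generated by local sections whose image in $\overline{\mathcal M}_B\cong\N^{b(\overline x)+h(\overline x)}$ is a generator corresponding to a branch of $\Delta$, and only at points with $b(\overline x)=m$; at points with $b(\overline x)<m$ it is the unit ideal. Two points need care here.

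First, the branches of $\Delta$ versus those of $H$ are distinguished intrinsically by the divisors, so the generators $e_i$ (up to units) are canonical in $\overline{\mathcal M}_B$ and are permuted on overlaps. The ideal $(e_1,\dots,e_m)$ is symmetric, so the local ideals agree on overlaps and descend by étale descent, as in the proof of Proposition \ref{globalncresolution}.

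Second, the set $\{b=m\}$ must be closed, so that setting the ideal to be the unit ideal away from it is compatible. This holds because $b$ is upper semicontinuous and $m$ is its maximum. On a chart around a point with $b=m$, every nearby point has $b=\#\{i: x_i=0\}$, and $\{b=m\}\cap U=V(x_1,\dots,x_m)$. Away from it, some $x_i$ is a unit, so $(e_1,\dots,e_m)\mathcal M_B$ is the unit ideal. Hence the definition is consistent.

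Coherence holds since on each chart the ideal is finitely generated by chart elements.

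\textbf{Properties.} Locally $\mathcal{I}_m=(x_1,\dots,x_m)$ with $x_i,y_j$ part of a coordinate system. So $C_m$ is smooth of codimension $m$, its geometric points are exactly those with $b=m$, and $H=V(y_1\cdots y_h)$ in the same coordinates.

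\textbf{Main obstacle.} The key step is the descent. The branches $x_i$ of $\Delta$ are only defined étale locally and may be permuted by monodromy on overlaps, and the chart monoids differ between points. I would handle this by phrasing the ideal intrinsically through $\overline{\mathcal M}_B$ and the decomposition $B=\Delta\cup H$. I would also verify that distinct branches through a point with $b=m$ never become identified, which holds by the normal crossing condition.
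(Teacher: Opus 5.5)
Your final construction is correct and is essentially the paper's proof. You take $(e_1,\dots,e_m)$ on étale charts centered at points with $b=m$ and the unit ideal on the open complement of the locus $\{b=m\}$, then glue by comparing stalks of $\mathcal{M}_B$. The paper instead uses $\overline{J}_m=\prod_{|I|=m}(e_j\mid j\in I)$ on an arbitrary free chart; this reduces to your $(e_1,\dots,e_m)$ when exactly $m$ branches of $\Delta$ appear and to the unit ideal when fewer do, so it gives the same stalks while sparing you the case distinction and the closedness argument.

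In a write-up, discard the two abandoned attempts. In particular, drop the line that defines $\mathcal{I}_m$ independently of $\mathcal{J}_m$, which is not meaningful since $\mathcal{I}_m$ must be $\mathcal{J}_m\mathcal{O}_X$.
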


	\begin{proof}
		Locally on $X$, choose a chart modeled on a free monoid $P$, and let $e_1,\ldots,e_r$ be the generators of $P$ corresponding to the branches of $\Delta$. Define the monoid ideal $$\overline{J}_m:=\prod_{\substack{I\subset\{1,\ldots,r\}\\|I|=m}}(e_j\mid j\in I),$$ where the empty product is defined to be the unit ideal. At a geometric point, let $A\subset\{1,\ldots,r\}$ be the set of indices of the branches passing through this point. Then all generators $e_j$ with $j\notin A$ become units. Since $|A|\leq m$, every factor of $\overline{J}_m$ becomes the unit ideal if $|A|<m$, whereas if $|A|=m$, the only factor which does not become the unit ideal is $(e_j\mid j\in A)$.

		Hence the stalks of $\overline{J}_m$ depend only on the set of branches of $\Delta$ passing through the point, and these ideals descend to a coherent ideal of $\overline{\mathcal{M}}_B$. Let $\mathcal{J}_m\subset\mathcal{M}_B$ be its inverse image. Then $\mathcal{I}_m=\mathcal{J}_m\mathcal{O}_X$ is the unit ideal away from the closed subset of points with $b(\overline{x})=m$, and near such a point it is given by the equations $x_1=\cdots=x_m=0$. These equations show that $C_m$ is a smooth closed subspace of codimension $m$. This completes the proof.
	\end{proof}

	\begin{lem} \label{branchseparationstep}
		In the situation of Lemma \ref{maximumbranchideal}, let $\pi:X'\to X$ be the log blow-up of $(X,\mathcal{M}_B)$ along $\mathcal{J}_m$. Let $\Delta'$ and $H^{\mathrm{str}}$ be the strict transforms of $\Delta$ and $H$, respectively, and let $E$ be the reduced exceptional divisor. Then $\pi$ is the ordinary blow-up of $X$ along $C_m$, $X'$ is smooth, $H':=H^{\mathrm{str}}\cup E$ is a strictly normal crossing divisor, and $\Delta'\cup H'$ is a normal crossing divisor. Moreover, $\Delta'$ has at most $m-1$ branches at every geometric point of $X'$.
	\end{lem}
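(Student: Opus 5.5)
All assertions are étale local on $X$. Log blow-ups are compatible with strict étale base change by Proposition \ref{pullback}, and the notions of smoothness, strict transform, exceptional divisor and branches of $\Delta'$ are étale local. So we may work on an étale chart around a geometric point $\overline{x}$ of $X$. If $b(\overline{x})<m$, then $\mathcal{J}_m$ is the unit ideal near $\overline{x}$ by Lemma \ref{maximumbranchideal}, so $\pi$ is an isomorphism there and nothing changes. Hence we may assume $b(\overline{x})=m$. We then choose coordinates $x_1,\ldots,x_m,y_1,\ldots,y_h,z_1,\ldots,z_s$ with $\Delta=V(x_1\cdots x_m)$ and $H=V(y_1\cdots y_h)$. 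After shrinking, there is a strictly étale morphism to $\mathbb{E}_{\N^{m+h}}\times\mathbb{B}^{s}$, with the chart $\N^{m+h}\to\mathcal{M}_B$ sending the generators to the $x_i$ and $y_j$. On this chart, $\mathcal{J}_m$ is induced by the monoid ideal $(e_1,\ldots,e_m)$.

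\textbf{Step 1: $\pi$ is the blow-up along $C_m$.} By the Example following Proposition \ref{fanlogblowup}, the log blow-up of $\mathbb{E}_{\N^{m+h}}$ along $(e_1,\ldots,e_m)$ is the star subdivision at $e_1^*+\cdots+e_m^*$. Its underlying morphism is the ordinary blow-up along $V(x_1,\ldots,x_m)$, and this center is $C_m$ locally by Lemma \ref{maximumbranchideal}. Ordinary blow-ups commute with flat (here strictly étale) base change, and so do log blow-ups by Proposition \ref{pullback}. Both morphisms are also compatible with gluing. Hence $\pi$ is the blow-up of $X$ along the smooth center $C_m$, and $X'$ is smooth.

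\textbf{Step 2: explicit description on the blow-up charts.} Use the charts of Remark \ref{localblowup}. On the $i$-th chart ($1\le i\le m$) the monoid becomes free on
\[
e_i,\quad e_j-e_i\ (j\ne i,\ j\le m),\quad e_{m+1},\ldots,e_{m+h}.
\]
The regularity of the subdivided cones already shows that $P_i$ is free, so no saturation is needed. The corresponding coordinates are $x_i$, $x_j/x_i$ and $y_k$, and they form a regular system of coordinates together with the $z$'s. In these coordinates:
\begin{itemize}
\item $E=V(x_i)$;
\item $\Delta'=V\bigl(\prod_{j\ne i}x_j/x_i\bigr)$;
\item $H^{\mathrm{str}}=V(y_1\cdots y_h)$.
\end{itemize}
Thus the total boundary $\Delta'\cup H^{\mathrm{str}}\cup E$ is a coordinate normal crossing divisor, which shows that $\Delta'\cup H'$ is normal crossing. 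It also shows that the log structure on $X'$ is the compactifying one, which is compatible with the fact that log blow-ups preserve log smoothness. Moreover, at any point the branches of $\Delta'$ passing through it are among the $m-1$ divisors $V(x_j/x_i)$, $j\ne i$. This gives the bound of at most $m-1$ branches of $\Delta'$.

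\textbf{Step 3: $H'$ is strictly normal crossing.} This is the main obstacle, because strictness is a global condition: each component must be smooth, i.e.\ have no self-crossings. Étale locally, $H'$ is normal crossing by Step 2, so it suffices to show that every irreducible component of $H'$ is smooth.
\begin{itemize}
\item The components of $H^{\mathrm{str}}$ are the strict transforms of the components $H_k$ of $H$. Since $H$ is strictly normal crossing and $C_m$ meets each $H_k$ transversally (the coordinates $x$ and $y$ are independent), the strict transform of $H_k$ is the blow-up of the smooth $H_k$ along the smooth $C_m\cap H_k$. Hence it is smooth.
\item The exceptional divisor $E$ is the projectivized normal bundle of the smooth $C_m$. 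So $E$ is smooth, and in particular it has no self-intersection.
\end{itemize}
Thus every component of $H'$ is smooth and their intersections are transversal, which gives the strict normal crossing property. The care needed here is to argue with global smooth components rather than with local branches only, but smoothness of $C_m$ and of each $H_k$ (from Lemma \ref{maximumbranchideal} and the strictness of $H$) makes this work. Finally, $\Delta'$ and $H'$ have no common components, as the local equations show.
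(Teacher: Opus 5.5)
Your proposal is correct and follows essentially the same route as the paper: you reduce to an étale chart at a point of $C_m$ with free monoid and ideal $(e_1,\ldots,e_m)$, and use the free chart monoids $P_i$ of Remark \ref{localblowup} to identify $\pi$ with the blow-up along $C_m$ and to read off $E$, $\Delta'$ and $H^{\mathrm{str}}$ in coordinates. You then deduce strictness of $H'$ from the global smoothness of $E$ and of the strict transforms of the components of $H$, exactly as the paper does.

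There are only two minor differences. First, you cite the star-subdivision Example together with flat base change, whereas the paper matches the charts $x_j=x_iu_j$ directly. Second, you explicitly justify smoothness of each strict transform of $H_k$ as the blow-up of $H_k$ along the transversal smooth center $C_m\cap H_k$, which the paper leaves to ``the same computation''.
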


	\begin{proof}
		Let $\overline{x}$ be a geometric point of $C_m$, and choose coordinates as in Lemma \ref{maximumbranchideal}. Then locally at $\overline{x}$, we may write the chart monoid and the ideal as
		$$P=\bigoplus_{j=1}^m\N e_j\oplus \bigoplus_{k=1}^h\N f_k,\qquad J=(e_1,\ldots,e_m),$$
		where $e_j$ and $f_k$ correspond to $x_j$ and $y_k$, respectively. For $1\leq i\leq m$, the $i$-th chart of the log blow-up (see Remark \ref{localblowup}) is modeled on the monoid
		$$P_i=P+\sum_{j\ne i}\N(e_j-e_i)=\N e_i\oplus \bigoplus_{j\ne i}\N(e_j-e_i)\oplus \bigoplus_{k=1}^h \N f_k.$$
		Since these generators form a basis of $P^{\mathrm{gp}}$, the monoid $P_i$ is free. On the corresponding chart of $X'$, we have $x_j=x_iu_j$ for $j\ne i$, where $u_j$ is the function corresponding to $e_j-e_i$. These are the charts of the ordinary analytic blow-up of $X$ along $(x_1,\ldots,x_m)$, and the identifications agree on overlaps. Hence $\pi$ is the blow-up of $X$ along $C_m$.

		On this chart, the three parts of the boundary are given by
		$$\Delta'=V\Bigl(\prod_{j \ne i}u_j \Bigr),\qquad E=V(x_i),\qquad H^{\mathrm{str}}=V(y_1\cdots y_h).$$
		Thus $X'$ is smooth and the boundary is a normal crossing divisor. The divisor $E$ is smooth, since it is the projectivization of the normal bundle of $C_m$ in $X$. The same computation shows that the strict transforms of the components of $H$, together with $E$, are smooth, and so are their intersections. Therefore, $H'$ is a strictly normal crossing divisor.

		Finally, above $C_m$, at most $m-1$ branches of $\Delta'$ appear on each chart. Away from $C_m$, the morphism $\pi$ is an isomorphism, and $\Delta$ already has at most $m-1$ branches there. This proves the last assertion.
	\end{proof}

	\begin{prop} \label{nctosnc}
		Let $X$ be a quasi-compact smooth rigid variety over $K$, equipped with the compactifying log structure associated with a normal crossing divisor $D$. Then there is a finite sequence of log blow-ups, with composition $\pi:X'\to X$, such that $(\pi^{-1}D)_{\mathrm{red}}$ is a strictly normal crossing divisor. Moreover, the log structure on $X'$ is the compactifying log structure associated with $(\pi^{-1}D)_{\mathrm{red}}$.
	\end{prop}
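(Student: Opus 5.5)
The plan is an induction on the invariant $m=\max_{\overline{x}}b(\overline{x})$ of Definition \ref{unresolvedbranches}, starting from $\Delta_0:=D$ and $H_0:=\varnothing$. These satisfy the hypotheses of that definition: $H_0$ is trivially strictly normal crossing, there are no common components, and $B_0=D$ is normal crossing. Since $X$ is quasi-compact, $m$ is finite.

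\textbf{Inductive step.} Suppose we are given a triple $(X_k,\Delta_k,H_k)$ satisfying the hypotheses of Definition \ref{unresolvedbranches}, equipped with the compactifying log structure $\mathcal{M}_{B_k}$ of $B_k=\Delta_k\cup H_k$, and with invariant $m_k\geq2$.
\begin{enumerate}
\item Lemma \ref{maximumbranchideal} produces a coherent ideal $\mathcal{J}_{m_k}\subset\mathcal{M}_{B_k}$.
\item Lemma \ref{branchseparationstep} shows that $X_{k+1}:=\operatorname{LogBl}_{\mathcal{J}_{m_k}}(X_k)$ is the ordinary blow-up along the smooth center $C_{m_k}$ and that $X_{k+1}$ is smooth.
\item Set $\Delta_{k+1}:=\Delta_k'$ and $H_{k+1}:=H_k^{\mathrm{str}}\cup E$. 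By the same lemma, $H_{k+1}$ is strictly normal crossing, $B_{k+1}$ is normal crossing, and the new invariant satisfies $m_{k+1}\leq m_k-1$.
\item $\Delta_{k+1}$ and $H_{k+1}$ have no common components. Indeed, $E$ is not a component of the strict transform $\Delta_k'$, and the strict transforms of distinct components remain distinct.
\end{enumerate}
Quasi-compactness is preserved because $\pi$ is proper by Proposition \ref{logblowupproperties}. After at most $m-1$ steps we reach $m_r\leq1$.

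\textbf{Identifying the log structure after each step.} We must check that the fs log structure on $X_{k+1}$ produced by the log blow-up is the compactifying log structure of $B_{k+1}=\pi^{-1}(B_k)_{\mathrm{red}}$. This is checked on the charts in the proof of Lemma \ref{branchseparationstep}. There the monoid $P_i$ is free with generators corresponding to $x_i$, the $u_j$, and the $y_k$, and these are exactly the local equations of the branches of $E$, $\Delta'$ and $H^{\mathrm{str}}$. Alternatively, one can invoke the result of Section 2.1 identifying the log structure of a log smooth fs log rigid variety with $j_*\mathcal{O}_U^\times\cap\mathcal{O}_X$, using that log blow-ups are log \'etale and are isomorphisms over the trivial locus (Proposition \ref{logblowupproperties}). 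This shows the trivial locus stays $X\setminus D$, so $B_{k+1}=(\pi^{-1}D)_{\mathrm{red}}$.

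\textbf{Conclusion when $m_r\leq1$.} At every geometric point at most one branch of $\Delta_r$ passes. The remaining task is to show that $B_r=\Delta_r\cup H_r$ is strictly normal crossing, i.e.\ that the components of $\Delta_r$ are smooth and meet $H_r$ transversally.
\begin{itemize}
\item Smoothness of the components of $\Delta_r$ follows because a normal crossing divisor with exactly one local branch at each point is smooth there.
\item Transversality follows from the normal crossing property of $B_r$ together with the local coordinates of Definition \ref{unresolvedbranches}.
\item Intersections of $\Delta_r$ with strata of $H_r$ are smooth for the same reason.
\end{itemize}
The one subtlety I expect to be the main obstacle is global rather than local. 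A smooth component of $\Delta_r$ is globally smooth, but in the rigid analytic setting one must make sure that ``strictly normal crossing'' only requires smoothness of the irreducible (connected smooth) components and of their intersections. It must not require, for instance, that self-intersections are excluded by some global condition not visible in \'etale charts. Since $\Delta_r$ is smooth, its connected components are its irreducible components, which avoids this issue. I would handle this by working with the definition of strictly normal crossing via smooth components, consistent with its use in Lemma \ref{branchseparationstep}.
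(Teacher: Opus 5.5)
Your proposal is correct and follows the paper's proof. Both start from $(\Delta_0,H_0)=(D,\varnothing)$, use Lemma \ref{branchseparationstep} to lower the maximal number of branches of the strict transform of $D$ by one at each step, and conclude once $m\leq 1$: then $\Delta_N$ is smooth, so $\Delta_N\cup H_N$ is strictly normal crossing. Your extra checks, namely that each $X_k$ carries the compactifying log structure of $B_k$ (needed to invoke Lemma \ref{maximumbranchideal} again) and that $\Delta_k$ and $H_k$ never acquire common components, are details the paper leaves implicit, and you handle them correctly.
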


	\begin{proof}
		Put $X_0:=X$, $\Delta_0:=D$ and $H_0:=\varnothing$. At each step, let $\Delta_i$ be the strict transform of $D$ on $X_i$, and let $H_i$ be the reduced union of the exceptional divisors created so far. If the maximal number $m_i$ of branches of $\Delta_i$ is at least two, we apply Lemma \ref{branchseparationstep} to $(\Delta_i,H_i)$ to obtain $X_{i+1}$. After at most $\max\{0,m_0-1\}$ steps, we obtain some $X_N$ with $m_N\leq1$. Then $\Delta_N$ is a smooth divisor, and therefore $\Delta_N \cup H_N$ is a strictly normal crossing divisor. Since each step is a log blow-up by construction, this completes the proof.
	\end{proof}

	Combining Propositions \ref{globalncresolution} and \ref{nctosnc}, we obtain the following global resolution theorem.

	\begin{thm} \label{globalresolution}
		Let $X$ be a quasi-compact fs log rigid variety log smooth over $K$, and let $X^\circ$ be the maximal open subspace of $X$ on which the log structure is trivial. Then there exists a morphism $\pi:X'\to X$ which is a composition of log blow-ups, such that $X'$ is smooth over $K$, $\pi$ is an isomorphism over $X^\circ$, and the log structure on $X'$ is the compactifying log structure associated with the strictly normal crossing divisor $D':=(X' \setminus X^\circ)_{\mathrm{red}}$.
	\end{thm}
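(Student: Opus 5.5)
The plan is to compose the two results already established, Proposition \ref{globalncresolution} and Proposition \ref{nctosnc}, and to check that the composite has all the required properties.

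\emph{First step: reach a normal crossing boundary.} Apply Proposition \ref{globalncresolution} to $X$. This gives a finite sequence of log blow-ups $X_r\to\cdots\to X_0=X$ with the following properties:
\begin{enumerate}
\item[(a)] each step is an isomorphism over $X^\circ$;
\item[(b)] $X_r$ is smooth over $K$;
\item[(c)] the log structure on $X_r$ is the compactifying log structure associated with the normal crossing divisor $D_r=(X_r\setminus X^\circ)_{\mathrm{red}}$.
\end{enumerate}
Each log blow-up is proper by Proposition \ref{logblowupproperties}, so $X_r$ is again quasi-compact.

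\emph{Second step: separate branches.} Apply Proposition \ref{nctosnc} to $(X_r,D_r)$. This gives a further composition of log blow-ups $\pi_2:X'\to X_r$ such that $(\pi_2^{-1}D_r)_{\mathrm{red}}$ is a strictly normal crossing divisor and the log structure on $X'$ is its compactifying log structure. Put $\pi:=\pi_1\circ\pi_2$, where $\pi_1:X_r\to X$ is the composite from the first step. Then $\pi$ is a composition of log blow-ups.

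\emph{Third step: remaining properties.} By Proposition \ref{logblowupproperties}, every log blow-up is an isomorphism over the trivial locus of the log structure of its source. Two facts then give the claims.
\begin{enumerate}
\item[(i)] The trivial locus of $X_r$ is $X_r\setminus D_r$, which is identified with $X^\circ$.
\item[(ii)] Each step in the second stage is a log blow-up along an ideal of the compactifying log structure. By Lemma \ref{branchseparationstep}, it is an ordinary blow-up along a center $C_m$ contained in the boundary. Hence it is an isomorphism over the complement of the boundary, and that complement remains $X^\circ$ at each stage.
\end{enumerate}
It follows that $\pi$ is an isomorphism over $X^\circ$, and that $(\pi^{-1}D_r)_{\mathrm{red}}=(X'\setminus X^\circ)_{\mathrm{red}}=D'$. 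Smoothness of $X'$ comes from Lemma \ref{branchseparationstep}, since a blow-up of a smooth space along a smooth center is smooth.

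\emph{Main obstacle.} The only real subtlety is the bookkeeping in the third step: we must ensure that the exceptional divisors lie over the boundary. This holds because the centers are cut out by ideals $\mathcal J_m\mathcal O_X$ that are the unit ideal on the trivial locus. So the complement of the boundary never changes, and the final boundary is exactly $(X'\setminus X^\circ)_{\mathrm{red}}$.
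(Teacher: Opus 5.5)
Your proposal is correct and matches the paper, which proves the theorem by applying Proposition \ref{globalncresolution} and then Proposition \ref{nctosnc} to the resulting $X_r$; your third step only spells out the bookkeeping the paper leaves implicit, namely that the trivial locus is preserved and that the final boundary equals $(X'\setminus X^\circ)_{\mathrm{red}}$. One harmless wording slip: a log blow-up $Y\to X$ is an isomorphism over the trivial locus of its target $X$, not of its source.
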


	\begin{rem}
		In the rigid analytic setting, an argument similar to that of \cite[Corollary 4.11]{niziol2006toric} would show that a composition of log blow-ups is again a log blow-up. In that case, a single log blow-up suffices in Theorem \ref{globalresolution}.
	\end{rem}

	\bibliography{ref}
	
	\bibliographystyle{alpha}
	
\end{document}